\theoremstyle{plain}
\newtheorem{theorem}{Theorem}[section]
\newtheorem{lemma}[theorem]{Lemma}
\newtheorem{corollary}[theorem]{Corollary}
\theoremstyle{definition}
\newtheorem{definition}[theorem]{Definition}
\theoremstyle{remark}
\newtheorem{remark}[theorem]{Remark}
\numberwithin{equation}{section}
\begin{document}


\title[O-minimal de Rham cohomology]{O-minimal de Rham cohomology}

\author[R. Bianconi]{Ricardo Bianconi}
\address{Instituto de Matem\'atica e Estat\'istica da Universidade de S\~ao Paulo\\
Rua do Mat\~ao, 1010, Cidade Universit\'aria, CEP 05508-090, S\~ao Paulo, SP, Brazil}
\email{bianconi@ime.usp.br}

\author[R. Figueiredo]{Rodrigo Figueiredo}
\address{Instituto de Matem\'atica e Estat\'istica da Universidade de S\~ao Paulo\\
Rua do Mat\~ao, 1010, Cidade Universit\'aria, CEP 05508-090, S\~ao Paulo, SP, Brazil}
\email{rodrigof@ime.usp.br}
\thanks{This work was developed during the second author's PhD thesis and was supported by CNPq Brazil, Proc. No.  141829/2014-1.}

\subjclass[2010]{Primary 03C64. Secondary 14C30, 14F25, 14P15, 57A65, 58A12}

\keywords{o-minimal, manifolds, de Rham cohomology, Riemann integral, Br\"ocker's question}

\begin{abstract}
O-minimal geometry generalizes both semialgebraic and subanalytic geometries, and has been very successful in solving special cases of some problems in arithmetic geometry, such as Andr\'e-Oort conjecture. Among the many tools developed in an o-minimal setting are cohomology theories for abstract-definable continuous manifolds such as singular cohomology, sheaf cohomology and \v Cech cohomology, which have been used for instance to prove Pillay's conjecture concerning definably compact groups. In the present paper we elaborate an o-minimal de Rham cohomology theory for abstract-definable $\mathcal{C}^\infty$ manifolds in an o-minimal expansion of the real field which admits smooth cell decomposition and defines the exponential function. We can specify the o-minimal cohomology groups and attain some properties such as the existence of Mayer-Vietoris sequence and the invariance under abstract-definable $\mathcal{C}^\infty$ diffeomorphisms. However, in order to obtain the invariance of our o-minimal cohomology under abstract-definable homotopy we must, working in a tame context that defines sufficiently many primitives, assume the validity of a statement related to Br\"ocker's question.
\end{abstract}

\maketitle


\tableofcontents

\section{Introduction}

O-minimal structures have their roots in the early 80's in the work \cite{vdDries1984}. In that paper, L. van den Dries, before discussing the question raised by Tarski in his monograph \cite{Tarski} of whether the elementary theory of the exponential field $(\mathbb{R}, +,\cdot, \exp)$ is decidable, derives some finiteness properties of sets definable in an expansion of $(\mathbb{R},<)$ of finite type (\textit{i.e.}, one within the definable subsets of $\mathbb{R}$ are unions of a finite set and finitely many intervals). Soon afterwards, in a series of three papers \cite{PSI}, \cite{KPS},  and \cite{PSIII}, A. Pillay, C. Steinhorn and J. Knight give a systematized treatment of expansions of a dense linear order without endpoints that have the strong condition of ``every definable set with parameters is a finite union of intervals and points'', under the coinage of o-minimal structures, extending the work of L. van den Dries, among other things. We refer the reader to \cite{vandendries-tame} and \cite{vandendries-miller-geometric} for an introduction to o-minimal structures from a geometric viewpoint. 

O-minimality found deep connections with diophantine geometry in the first decade of 21st century, say beginning with the study carried out by Pila and Wilkie of rational points in a definable set \cite{pilawilkie2006} and culminating in the unconditional proof of Andr\'e-Oort conjecture for arbitrary products of modular curves \cite{pila2011} by Pila. Postliminary works, for instance \cite{pilatsimerman2013} and \cite{gao2016}, presenting solutions for special cases of this conjecture have also used o-minimality in a crucial way; also, in a recent paper \cite{Wilkie2016} Wilkie raises diophantine questions in the spirit of those addressed in \cite{pilawilkie2006}, which somehow shows that applications of this fragment of model theory to algebraic geometry are far from having been exhausted. 

Linked up with algebraic geometry although in a different direction, Edmundo developed a cohomology theory for the category of definable manifolds and continuous maps within the framework of an o-minimal expansion of a real closed field \cite{edmundo2001}, and used this to solve a problem, proposed by Peterzil and Steinhorn \cite{peterzil-etc1999}, concerning the existence of torsion points on definably compact definable abelian groups. (Here ``definable manifold'' is, in our parlance, an abstract-definable $\mathcal{C}^0$ manifold - see section 2.) Subsequently, Edmundo, Jones and Peatfield established a sheaf cohomology for the category of definable sets in an o-minimal expansion of a group \cite{edmundo-etc2006}; and, working in the category where the sets and continuous maps are all definable in an arbitrary o-minimal structure with definable Skolem functions, Edmundo and Peatfield proved the existence of a \v Cech cohomology theory \cite{edmundo-etc2008}. In view of all these results settling cohomologies for (abstract-)definable objects, we inquire about the existence of a definable analogue of the de Rham cohomology on a tame category. 

In the present paper we elaborate a de Rham-like cohomology theory for abstract-definable $\mathcal{C}^\infty$ manifolds in the setting of an o-minimal expansion of the real field which admits smooth cell decomposition and defines the exponential function, and show that such a cohomology has certain strong properties only in particular o-minimal contexts. Our program is to follow the lines of the construction of the classical de Rham cohomology starting from the general context of abstract-definable $\mathcal{C}^p$ manifolds ($p<\infty$) where the fixed framework is an arbitrary o-minimal expansion of a real closed field, and push it to the limit. Abstract-definable manifold of class $\mathcal{C}^p$ ($0<p<\infty$), in an o-minimal structure expanding a real closed field, generalizes the notion of an abstract $\mathcal{C}^p$ Nash manifold \cite{shiota1986}, since the transition maps might possess additional parts other than the semialgebraic.

This paper is organized as follows. 

We fix an o-minimal expansion of a real closed field and introduce in Section 2 the notion of an abstract-definable manifold $M$ of class $\mathcal{C}^p$, with $0\leq p<\infty$, and prove some basic topological facts concerning the manifold topology, some of them quite similar to the classical case, for example, every abstract-definable $\mathcal{C}^p$ manifold is definably regular, locally definably compact; and some others different such as every abstract-definable $\mathcal{C}^p$ manifold has finitely many definably connected components. Also, in this section, we establish the tangent space $T_x M$ of an abstract-definable $\mathcal{C}^p$ manifold $M$ at a point $x$ in $M$ by following \cite{ibjorgen2001}, and its corresponding cotangent space $T_x^*M$. 
Section 3 is where the main difficulty of this work lies in, and it is devoted to the construction of an abstract-definable version of partitions of unity (with respect to an abstract-definable $\mathcal{C}^p$ atlas) and some of their consequences, as the existence of abstract-definable $\mathcal{C}^p$ bump functions. Unlike in the classical setting in which a partition of unity subordinate to a fixed open cover of a smooth manifold is built upon the employment of tools such as smooth bump functions and the existence of a countable basis for a smooth manifold - both of them unavailable for us -, we adapt a method by Fischer \cite{fischer2008}, used to attain partitions of unity within the framework of an o-minimal expansion of the exponential real field that admits smooth cell decomposition, that makes heavy use of the finiteness of the atlas, and a weaker form of the definable $\mathcal{C}^p$ Urysohn's lemma having a definable open set $U\subseteq R^m$ as the background topological space (Lemma \ref{3pu}). In order to obtain this weak Urysohn's lemma, Fisher settles a result concerning the approximation of definable continuous functions by definable $\mathcal{C}^p$ functions, where $0< p\leq \infty$. Thamrongthanyalak proves \cite{Athipat2013} an analogous approximation theorem for o-minimal expansions of a real closed field, and for $0<p<\infty$, enabling the technique in question to be applied within our context. (Observe that if the concerned definable open set $U$ were the whole $R^m$, we would be done by Corollary C.12 \cite{vandendries-miller-geometric}, with no need of any approximation result at all.) 
After providing the foundations, we proceed to introduce the notion of an abstract-definable $\mathcal{C}^p$ vector bundle in Section 4, which is entirely analogous to the classical case, except that the number of the local trivializations is finite and the maps involved are abstract-definable $\mathcal{C}^p$. In a similar fashion, we bring in the concept of abstract-definable $\mathcal{C}^p$ sections, and give a local description of some of them. 
In Section 5 we present the core element in the study of the o-minimal de Rham cohomology theory, the abstract-definable $\mathcal{C}^p$ $k$-forms with $k\geq 0$, and give a characterization of these special abstract-definable $\mathcal{C}^p$ sections in terms of the coordinate frames. This is used to prove, among other things, that the pullback of abstract-definable $\mathcal{C}^p$ $k$-forms under an abstract-definable $\mathcal{C}^p$ map are  abstract-definable $\mathcal{C}^{p-1}$ $k$-forms. 
In Section 6 we turn our attention to smooth abstract-definable forms, which requires that we work in an o-minimal expansion $\mathcal{R}$ of the real field which admits cell decomposition, and also defines the exponential, provided that we want to exploit what we produced so far. Following the classical case, we verify the existence and uniqueness of an exterior derivative on the vector space of all abstract-definable $\mathcal{C}^\infty$ forms. This exterior derivative is tame in the sense that the abstract-definability of the forms is preserved. Moreover it commutes with the pullback of an abstract-definable $\mathcal{C}^\infty$ map. 
Finally, in Section 7 we specify the $k$th o-minimal de Rham cohomology groups, and demonstrate that the o-minimal de Rham cohomology satisfies all analogous main theorems for classical de Rham cohomology but the Hotomopy Axiom (Theorem \ref{8dr}), once such a statement fails for instance in the o-minimal context of the exponential real field $\mathbb{R}_{\text{exp}}$. We finish this paper by showing that the Homotopy Axiom holds, therefore so does the Poincar\'e Lemma (Corollary \ref{9dr}), when the setting is the Pfaffian closure of $\mathcal{R}$ and the Br\"ocker's question holds true for every pair $(\mathfrak{R}, \widetilde{\mathfrak{R}})$ of o-minimal expansions of $\mathbb{R}_{\exp}$, with $\widetilde{\mathfrak{R}}$ taken to be the Pfaffian closure of $\mathfrak{R}$.

We follow closely \cite{Tu2010} in the development of Sections 4-7, with no originality claimed other than the adjustments we had to make.
\\

\noindent\textsc{Acknowledgments}. The authors are grateful to Hugo Luiz Mariano and Tobias Kaiser for their valuable suggestions and insightful comments.
\\

\textit{For Sections 2-5, we fix an o-minimal expansion $\mathcal{R}$ of an arbitrary real closed field $(R,>,0,1,+,-,\cdot)$. By ``definable'' we mean ``definable in $\mathcal{R}$ with parameters in $R$'', unless otherwise stated}.
\\

\textsc{Notation}. $\mathbb{N}$ denotes the set of nonnegative integers, and $\mathbb{R}$ the field of real numbers. For any set $X$, $\text{id}_X$ denotes the identity map $x\mapsto x$ on $X$. The $m$-tuple $(r^1,\ldots,r^m)$ indicates the coordinates of a point in $R^m$ in the standard basis. Given a topological space $X$ and a subset $Y$ of $X$, by $\text{int}_X(Y),\text{cl}_X(Y)$, and $\text{bd}_X(Y)$ we mean the topological interior, closure and boundary of $Y$ in $X$ respectively; when it is clear from the context the topological space, we drop the letter $X$ in these notations. For any map $f\colon A\to B$, $\Gamma(f)$ denotes its graph $\{(x,f(x))\,:\, x\in A\}$. Given a map $f$ from an interval $I$ to a topological space $X$ and a limit point $a$ of $I$, we denote by $\lim_{t\to a^+}f(t)$ and $\lim_{t\to a^-} f(t)$ the right- and left-handed limits of $f$ at $a$, respectively. The collection of all functions from a set $X$ to $R$ will be denoted by $\mathcal{F}(X)$. ($\mathcal{F}(X)$ is made into a commutative ring with identity when endowed it with pointwise sum and multiplication.)
Further notations are explained along the text.

\section{Tame calculus on abstract-definable manifolds}

Let $M$ be a set, and let $\{\phi_i\colon U_i\to \phi_i(U_i)\subseteq R^m\}_{i\in \Lambda}$ be a finite family of set-theoretic bijections, where each $U_i$ is a subset of $M$ and $\phi_i(U_i)$ is a definable open set in $R^m$. Recall from Section 10 (\cite{beot2001}, p. 114) that such a collection is said to be an \textit{abstract-definable $\mathcal{C}^p$ atlas} on $M$ of dimension $m$, where $0\leq p<\infty$, if $M=\bigcup_{i\in \Lambda} U_i$ and for any $i,j\in \Lambda$ the sets  $\phi_i(U_i\cap U_j), \phi_j(U_i\cap U_j)$ are definable and open in $R^m$ and the map $\phi_j\circ \phi_i^{-1}\colon \phi_i(U_i\cap U_j) \to \phi_j(U_i\cap U_j)$ is a definable $\mathcal{C}^p$-diffeomorphism. (By ``definable $\mathcal{C}^0$-diffeomorphism'' we mean ``definable homeomorphism''.) The elements $\phi\colon U\to \phi(U)$ of an abstract-definable $\mathcal{C}^p$ atlas are called \textit{charts}, and will usually be written as the pair $(U,\phi)$.

The relation $\sim$, defined on the set of all abstract-definable $\mathcal{C}^p$ atlases of dimension $m$ on a set $M$ by $\mathcal{A}\sim \mathcal{B}$ if and only if $\mathcal{A}\cup \mathcal{B}$ is an abstract-definable $\mathcal{C}^p$ atlas on $M$, is an equivalence relation. In this case, we say that $\mathcal{A}$ and $\mathcal{B}$ are \textit{compatible}.
\\

\textsc{Notation}. Throughout the text, the symbol $\sim$ will designate this relation of atlas compatibility.
\\

Any abstract-definable $\mathcal{C}^p$ atlas $\{\phi_i\colon U_i\to \phi_i(U_i)\subseteq R^m\}_{i\in \Lambda}$ on a set $M$ endows such a set with a topology whose open sets are those subsets $U\subseteq M$ such that $\phi_i(U_i\cap U)$ are open in $R^m$ for all $i\in \Lambda$. This is the unique topology on $M$ in which each $U_i$ is open and every $\phi_i$ is a homeomorphism. Two $\sim$-equivalent abstract-definable $\mathcal{C}^p$ atlases on a set induce the same topology, the \textit{manifold topology}. The manifold topology is obviously $T_1$, although is not Hausdorff as it shows Example 2.5 (\cite{edmundo-etc2016}, p. 4). Namely, consider the set $M$ given by the line segment with a point $\{(x,y)\in ]c,d[^2\,:\, x=y\}\cup \{(b,a)\}$, where $c<a<b<d$ in $R$, $U_1\mathrel{\mathop:}=M\setminus \{(b,b)\}$, $U_2\mathrel{\mathop:}=M\setminus \{(b,a)\}$, and let $\phi_i$ be the bijection $\pi|_{U_i}\colon U_i\to ]c,d[$ ($i=1,2$), where $\pi\colon R^2\to R$ denotes the projection onto the first coordinate, and note that the manifold topology on $M$ does not separate the points $(b,a)$ and $(b,b)$.

An \textit{abstract-definable $\mathcal{C}^p$ manifold} of dimension $m$ is a set $M$ together with a $\sim$-equivalence class of $m$-dimensional abstract-definable $\mathcal{C}^p$ atlases on $M$, whose manifold topology is Hausdorff. By abuse of notation, we will write just a pair $(M, \mathcal{A})$ to indicate an abstract-definable $\mathcal{C}^p$ manifold, or simply the set $M$ when the abstract-definable $\mathcal{C}^p$ atlas $\mathcal{A}$ is clear from the context. 

We bring back to the reader's mind from Chapter 7 (\cite{vandendries-tame}, p. 116) that a definable map $f\colon A\to R^n$, where $A\subseteq R^m$ is not necessarily open, is called a \textit{$\mathcal{C}^p$-map} if $f$ can be extended to a definable map of class $\mathcal{C}^p$ defined on an open set.

Let $(M,\mathcal{A})$ and $(N,\mathcal{B})$ be two abstract-definable $\mathcal{C}^p$ manifolds. Recall from Section 10 (\cite{beot2001}, p. 115) that a subset $A\subseteq M$ is called an \textit{abstract-definable set} in $M$ if $\phi(U\cap A)$ is definable for every chart $(U,\phi)$ in $\mathcal{A}$; and a map $f\colon M\to N$ is said to be \textit{abstract-definable} (resp., \textit{abstract-definable $\mathcal{C}^p$}, an \textit{abstract-definable $\mathcal{C}^p$ diffeomorphism}) if for every point $x\in M$ and any charts $(U,\phi)\in \mathcal{A}$, $(V,\psi)\in \mathcal{B}$ with $x\in U$ and $f(x)\in V$ the restriction 
$$
\psi\circ f\circ \phi^{-1}|_{\phi(U\cap f^{-1}(V))}\colon \phi(U\cap f^{-1}(V))\to \psi(f(U)\cap V)
$$ 
is definable (resp., a $\mathcal{C}^p$-map, a definable $\mathcal{C}^p$ diffeomorphism). The set of all abstract-definable open sets in $M$ forms a basis for the manifold topology. Moreover, abstract-definability of sets is stable under $\sim$-equivalent abstract-definable $\mathcal{C}^p$ atlases.  

If $f\colon M\to N$ is an abstract-definable $\mathcal{C}^p$ map between abstract-definable $\mathcal{C}^p$ manifolds, then: (i) The set of all abstract-definable subsets of $M$ forms a boolean algebra; (ii) for any abstract-definable subset $A$ of $M$, its topological closure $\text{cl}(A)$, interior $\text{int}(A)$ and boundary $\text{bd}(A)$ in $M$ are also abstract-definable; (iii) for any abstract-definable subset  $A$ of $M$, $f(A)$ is abstract-definable in $N$; (iv) for any abstract-definable subset $B$ of $N$, $f^{-1}(B)$ is abstract-definable in $M$; (v) the graph $\Gamma(f)$ of $f$ is an abstract-definable subset of $M\times N$; and (vi) in the case $M$, $N$ are definable as well as the charts in $M$ and $N$, every abstract-definable subset of $M$ and all abstract-definable functions from $M$ to $N$ are definable.

Let $g=(g_1,g_2)\colon M\to M_1\times M_2$ be a map, where $M_1$ and $M_2$ are abstract-definable $\mathcal{C}^p$ manifolds. Then, $g$ is abstract-definable $\mathcal{C}^p$ if and only if $g_1\colon M\to M_1$ and $g_2\colon M\to M_2$ are also abstract-definable $\mathcal{C}^p$ maps.

Let $M$ and $N$ be abstract-definable $\mathcal{C}^p$-manifolds. If $\mathcal{A}_1\sim \mathcal{A}_2$ are abstract-definable $\mathcal{C}^p$ atlases on $M$ and $\mathcal{B}_1\sim \mathcal{B}_2$ on $N$ then every subset $A\subseteq M$ abstract-definable in $\mathcal{A}_1$ is abstract-definable with respect to $\mathcal{A}_2$, and every map $f\colon M\to N$ abstract-definable $\mathcal{C}^p$ relative to $\mathcal{A}_1$ and $\mathcal{B}_1$ is also abstract-definable $\mathcal{C}^p$ in $\mathcal{A}_2$ and $\mathcal{B}_2$. 

\begin{remark}\label{remarksection1}
Given an abstract-definable $\mathcal{C}^p$ manifold $(M,\mathcal{A})$ of dimension $m$, we may always assume that the range $\phi(U)$ of the charts $(U,\phi)$ in $\mathcal{A}$ are bounded open sets in $R^m$, because the map $\tau\colon R^m\to ]-1,1[^m$ defined as 
$$
\tau(r^1,\ldots,r^m)\mathrel{\mathop :}=\left(\frac{r^1}{\sqrt{1+(r^1)^2}},\ldots,\frac{r^m}{\sqrt{1+(r^m)^2}}\right)
$$ 
is a semi-algebraic $\mathcal{C}^p$ diffeomorphism between $R^m$ and its image, and the sets $\{(U,\tau\circ \phi)\,:\, (U,\phi)\in \mathcal{A}\}$ and $\mathcal{A}$ are $\sim$-equivalent abstract-definable $\mathcal{C}^p$ atlases on $M$. By virtue of this and Theorem 1 (\cite{Wilkie2005}, p. 4), the image of each chart in $\mathcal{A}$ is a finite union of open cells in $R^m$. Since an open cell is definably $\mathcal{C}^p$ diffeomorphic to an open box in $R^m$, which in turn is definably $\mathcal{C}^p$ diffeomorphic to $R^m$, we may also suppose, at our convenience, the image of any chart in $\mathcal{A}$ equals $R^m$. (When $R=\mathbb{R}$, the above map $\tau$ is a semialgebraic real analytic diffeomorphism onto its image.)
\end{remark}

\textsc{Notation}. From now until the end of Section 5, unless otherwise stated, $(M,\mathcal{A})$ and $(N,\mathcal{B})$ denote abstract-definable $\mathcal{C}^p$ manifolds of dimensions $m$ and $n$, respectively, with $\mathcal{A}\mathrel{\mathop :}= \{\phi_i\colon U_i\to \phi_i(U_i)\}_{i\in \Lambda}$.

\begin{definition}
We say that $M$ is \textit{definably regular} if, for any abstract-definable closed subset $F$ of $M$ and any point $x\in M\setminus F$, there are disjoint abstract-definable open subsets $U$ and $V$ of $M$ such that $x\in U$ and $V\subseteq F$.
\end{definition}

One easily sees that $M$ is definably regular if and only if for any $x\in M$ and any abstract-definable open subset $U$ of $M$ there is an abstract-definable open subset $V\subseteq M$ with $x\in V\subseteq \text{cl}(V)\subseteq U$.

The following notion of definable compactness was introduced in \cite{peterzil-etc1999}. In a Euclidean space $R^m$ this conception has a similar characterization to that of the non-first order property of compactness (\cite{peterzil-etc1999}, Theorem 2.1, p. 772), the conjunction of boundedness and closedness.  

\begin{definition}
We say that $M$ is \textit{definably compact} if for every $a,b\in R\cup \{-\infty, +\infty\}$ where $a<b$, and for every abstract-definable continuous map $\gamma\colon ]a,b[\to M$, both limits $\lim_{t\to a^+} \gamma(t)$ and $\lim_{t\to b^-} \gamma(t)$, with respect to the manifold topology, exist in $M$. We call an abstract-definable subset $K\subseteq M$ a \textit{definably compact set} if for every abstract-definable continuous map $\gamma\colon ]a,b[\to M$, with $\text{Im}\gamma\subseteq K$, the limits $\lim_{t\to a^+} \gamma(t)$ and $\lim_{t\to b^-} \gamma(t)$ exist in $K$ with respect to the subspace topology on $K$. We say that $M$ is \textit{locally definably compact} if every $x\in M$ has a definably compact neighborhood.
\end{definition}

The following appears in Corollary 2.8 (\cite{edmundo-etc2016}, p. 7) where the topological space is a generalization of an abstract-definable $\mathcal{C}^0$ manifold, namely a Hausdorff definable space (see \cite{vandendries-tame}, Definition 10.1.2, p. 156 or \cite{edmundo-etc2016}, Definition 2.1, p. 3), and the background structure is an arbitrary o-minimal structure that has definable Skolem functions. That corollary is obtained by first proving that a Hausdorff, locally definably compact definable space is definably regular. Here we give a direct proof.

\begin{lemma}\label{1}
Every definably compact set $K\subseteq M$ is closed.
\end{lemma}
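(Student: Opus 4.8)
The plan is to show that the complement $M\setminus K$ is open in the manifold topology. Since the abstract-definable open sets form a basis, it suffices to produce, for each point $x\in M\setminus K$, an abstract-definable open neighbourhood of $x$ disjoint from $K$. I would argue by contradiction: suppose some $x\in M\setminus K$ has the property that every abstract-definable open neighbourhood of $x$ meets $K$, and derive a contradiction with the definable compactness of $K$.

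First I would localize the problem to a single chart. Since $M=\bigcup_{i\in\Lambda}U_i$ and $x\in U_i$ for some $i$, I can work inside the chart $(U_i,\phi_i)$ and transport everything to the definable open set $\phi_i(U_i)\subseteq R^m$ via the homeomorphism $\phi_i$, which also carries abstract-definable sets to definable sets. Set $p\mathrel{\mathop:}=\phi_i(x)$ and note that $\phi_i(U_i\cap K)$ is a definable subset of $R^m$ (as $K$ is abstract-definable). The assumption that $x$ lies in the closure of $K$ within $U_i$ translates into $p$ being in the closure of the definable set $\phi_i(U_i\cap K)$ in $R^m$. Using the curve-selection lemma for o-minimal structures, I can then choose a definable continuous map $\delta\colon\,]a,b[\,\to \phi_i(U_i\cap K)$ whose endpoint limit $\lim_{t\to b^-}\delta(t)$ equals $p$; here the codomain really does land in the image of $K$.

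Next I would push this curve back up to the manifold. Define $\gamma\mathrel{\mathop:}=\phi_i^{-1}\circ\delta\colon\,]a,b[\,\to M$. This is an abstract-definable continuous map with $\operatorname{Im}\gamma\subseteq K$, so the hypothesis that $K$ is a definably compact set forces $\lim_{t\to b^-}\gamma(t)$ to exist \emph{in} $K$ with respect to the subspace topology. On the other hand, since $\delta(t)\to p=\phi_i(x)$ and $\phi_i$ is a homeomorphism onto its image, the limit of $\gamma(t)$ in $M$ (if it exists) must be $x$, because $\gamma(t)=\phi_i^{-1}(\delta(t))\to\phi_i^{-1}(p)=x$ in the manifold topology. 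Combining these two facts yields $x\in K$, contradicting $x\in M\setminus K$. Hence no such $x$ exists and $M\setminus K$ is open, i.e.\ $K$ is closed.

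The main obstacle I anticipate is the careful handling of \emph{where} the limits are taken. Definable compactness of $K$ guarantees a limit in the \emph{subspace} topology of $K$, whereas I want to conclude something about the manifold topology of $M$; I must verify that these are compatible, which requires the manifold topology to be Hausdorff so that limits are unique and so that convergence in $K$ entails convergence in $M$ to the same point. A secondary technical point is justifying curve selection: the standard curve-selection lemma applies to definable sets in $R^m$, so I must be sure that reducing to the chart $\phi_i(U_i)$ and working with the definable set $\phi_i(U_i\cap K)$ legitimately supplies a curve whose lifted limit is forced to coincide with $x$. Once uniqueness of limits (from Hausdorffness) and the chart reduction are in place, the contradiction is immediate.
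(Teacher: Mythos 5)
Your proof is correct and follows essentially the same route as the paper's: argue by contradiction, localize to a chart, produce a definable curve inside $\phi(U\cap K)$ converging to $\phi(x)$, pull it back to an abstract-definable curve with image in $K$, and use definable compactness together with Hausdorffness (uniqueness of limits) to force $x\in K$. The only cosmetic difference is that you invoke the o-minimal curve selection lemma where the paper constructs the curve by hand via definable choice (choosing points in $B(\phi(x),t)\cap\phi(U)$ outside $\phi(U\cap(M\setminus K))$ and shrinking the interval to gain continuity) --- the same tool in packaged form.
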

\begin{proof}
We will show that $M\setminus K$ is open. Suppose, towards a contradiction, there is a point $x\in M\setminus K$ of which no open neighborhood is included in $M\setminus K$. Particularly, fixing a chart $(U,\phi)$ on $M$ at $x$, the intersection $B(\phi(x),\epsilon)\cap \phi(U)$ is not contained in $\phi(U\cap (M\setminus K))$ for each $\epsilon>0$. By definable choice, there is a definable map $\alpha\colon ]0,r[\to R^m$ such that $\alpha(t)\in  (B(\phi(x),t)\cap \phi(U))\setminus \phi(U\cap (M\setminus K))$ for all $t\in ]0,r[$. Let $\gamma\colon ]0,r[\to M$ be the composite map $\phi^{-1}\circ \alpha$. The map $\gamma$ is abstract-definable, and shrinking $r$ if necessary we may consider $\gamma$ continuous. Moreover, $\text{Im}\gamma\subseteq U\cap K$ and $\lim_{t\to 0} \gamma(t)=x$. From the definable compactness of $K$ and the uniqueness of the limit (recall that $M$ is Hausdorff), it follows that $x=\lim_{t\to 0}\gamma(t)\in K$, leading to a contradiction. 
\end{proof}

The second part of the theorem below is contained in Proposition 2.7 (\cite{edmundo-etc2016}, p. 6). Despite we also achieve the definable regularity of the abstract-definable ($\mathcal{C}^p$) manifold through the local definable compactness, our proof is rather distinct.

\begin{theorem}\label{2}
$M$ is locally definably compact and definably regular.
\end{theorem}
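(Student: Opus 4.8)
The plan is to derive both assertions from the single-chart structure, Lemma \ref{1}, and the Peterzil--Steinhorn characterization of definable compactness in $R^m$ (\cite{peterzil-etc1999}, Theorem 2.1): a definable subset of $R^m$ is definably compact if and only if it is closed and bounded. The crux is to show that the preimage under a chart of a small closed ball is a definably compact neighborhood.

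\textbf{Local definable compactness.} Fix $x\in M$ and a chart $(U,\phi)\in\mathcal{A}$ with $x\in U$. Since $\phi(U)$ is open in $R^m$, choose $\epsilon>0$ with $\overline{B}(\phi(x),\epsilon)\subseteq\phi(U)$; this closed ball is closed and bounded, hence definably compact in $R^m$. Put $K\mathrel{\mathop:}=\phi^{-1}(\overline{B}(\phi(x),\epsilon))$, which is an abstract-definable set and a neighborhood of $x$, as it contains the open set $\phi^{-1}(B(\phi(x),\epsilon))\ni x$. To see $K$ is definably compact, take any abstract-definable continuous $\gamma\colon\,]a,b[\,\to M$ with $\text{Im}\,\gamma\subseteq K\subseteq U$. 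Because the image lies in the single chart $U$, the composite $\phi\circ\gamma$ is a definable continuous curve into $\overline{B}(\phi(x),\epsilon)$, so its one-sided limits at $a$ and $b$ exist in $\overline{B}(\phi(x),\epsilon)$ by definable compactness of the ball. Transporting these through the homeomorphism $\phi^{-1}$ yields limits of $\gamma$ lying in $K$; since $\gamma$ remains in $K$, convergence in $M$ to a point of $K$ coincides with convergence in the subspace topology, so the required limits exist in $K$. Thus $K$ is a definably compact neighborhood of $x$, and $M$ is locally definably compact.

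\textbf{Definable regularity.} I would verify the equivalent formulation recorded just before Lemma \ref{1}: given $x\in M$ and an abstract-definable open $W\ni x$, I must produce an abstract-definable open $V$ with $x\in V\subseteq\text{cl}(V)\subseteq W$. Choose a chart $(U,\phi)$ at $x$; then $\phi(W\cap U)$ is open in $R^m$ and contains $\phi(x)$, so fix $\epsilon>0$ with $\overline{B}(\phi(x),\epsilon)\subseteq\phi(W\cap U)$. Set $V\mathrel{\mathop:}=\phi^{-1}(B(\phi(x),\epsilon))$ and $K\mathrel{\mathop:}=\phi^{-1}(\overline{B}(\phi(x),\epsilon))$, the latter definably compact by the first part. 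By Lemma \ref{1}, $K$ is closed in $M$, and since $V\subseteq K$ we obtain $\text{cl}(V)\subseteq K=\phi^{-1}(\overline{B}(\phi(x),\epsilon))\subseteq\phi^{-1}(\phi(W\cap U))=W\cap U\subseteq W$. Hence $x\in V\subseteq\text{cl}(V)\subseteq W$, giving definable regularity.

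The main obstacle I anticipate is the careful passage between curves in $M$ and curves in $R^m$. The argument hinges on arranging that all relevant neighborhoods sit inside one chart $U$, so that an abstract-definable curve with image in $K$ becomes, after composing with $\phi$, a genuinely definable curve in $R^m$; one then has to confirm that the existence of one-sided limits is preserved in both directions by the chart homeomorphism and that limits in $K$ with the subspace topology are exactly the limits in $M$ landing in $K$. Once this identification and Lemma \ref{1} are in hand, definable regularity follows formally, with the real content lodged in showing that preimages of closed balls are definably compact.
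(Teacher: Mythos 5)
Your proof is correct and follows essentially the same route as the paper's: pull back a closed, bounded definable neighborhood (the paper uses a closed box where you use a closed ball) through a single chart, verify definable compactness by transporting curves via $\phi$, and then obtain regularity from Lemma \ref{1} and the inclusion $x\in V\subseteq\mathrm{cl}(V)\subseteq K\subseteq W$. The only cosmetic difference is that the paper chooses the box inside $\phi(U\cap W)$ from the outset, so both conclusions fall out of one construction, whereas you run the construction twice.
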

\begin{proof}
Fix a point $x\in M$ and an abstract-definable open $W$ in $M$ containing $x$. Pick a chart $(U,\phi)$ on $M$ at $x$. Hence, there is an open box $B\subseteq R^m$ with  $\phi(x)\in B\subseteq \text{cl}(B)\subseteq \phi(U\cap W)$. Set $K= \phi^{-1}(\text{cl}(B))\subseteq U\cap W$, an abstract-definable set whose interior contains $x$, and let $\gamma\colon ]a,b[\to M$ be an abstract-definable continuous map with $\gamma(]a,b[)\subseteq K$. The map $\phi\circ \gamma\colon ]a,b[\to \phi(U)$ is then a definable continuous curve such that $(\phi\circ \gamma)(]a,b[)\subseteq \text{cl}(B)$, and as consequence of the definable compactness of $\text{cl}(B)$ both limits $\lim_{t\to a^+} \phi(\gamma(t))$, $\lim_{t\to b^-}\phi(\gamma(t))$ exist in $\text{cl}(B)$. By setting $L_1\mathrel{\mathop:}= \phi^{-1}(\lim_{t\to a^+}\phi(\gamma(t)), L_2\mathrel{\mathop:}= \phi^{-1}(\lim_{t\to b^-}\phi(\gamma(t)))\in K$, and noticing that $\lim_{t\to a^+}\gamma(t)=L_1$ and $\lim_{t\to b^-}\gamma(t)=L_2$, we conclude that $K$ is a definably compact neighborhood of $x$ contained in $W$. This proves the first part of the theorem. The second follows from the fact that $K$ is closed in $M$ (Lemma \ref{1}), and hence $x\in \phi^{-1}(B)\subseteq \text{cl}(\phi^{-1}(B))\subseteq K\subseteq W$.
\end{proof}

\begin{definition}
We say that $M$ is \textit{definably normal} if, for any two disjoint abstract-definable closed subsets $F_1$ and $F_2$ of $M$, there are disjoint abstract-definable open subsets $U_1$ and $U_2$ such that $F_1\subseteq U_1$ and $F_2\subseteq U_2$.
\end{definition}

Equivalently, $M$ is definably normal if given two disjoint abstract-definable closed subsets $F_1,F_2\subseteq M$ there exists an abstract-definable open subset $W\subseteq M$ satisfying $F_1\subseteq W\subseteq \text{cl}(W)\subseteq M\setminus F_2$.

As pointed out in Remark 3.4 (\cite{edmundo2001}, p. 9), the abstract-definable $\mathcal{C}^p$  manifold $M$ is definably regular (see Theorem \ref{2}) and therefore, by Theorem 10.1.8 (\cite{vandendries-tame}, p. 159), there is a continuous injective map $h$ from $M$ into $R^{2+2m}$, where $m$ is the dimension of $M$, which maps $M$ homeomorphically onto the definable set $h(M)$. The definable normality of $h(M)$ can thus be transferred to $M$ via $h$. This proves the following. 

\begin{theorem}\label{3}
$M$ is definably normal.
\end{theorem}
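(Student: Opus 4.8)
The plan is to transport the problem from the abstract-definable manifold $M$ to an honest definable subset of Euclidean space, where definable normality is already available, and then pull the separation back. First I would invoke Theorem \ref{2} to secure that $M$ is definably regular. Definable regularity is precisely the hypothesis needed to apply the embedding theorem for definable spaces (Theorem 10.1.8 in \cite{vandendries-tame}): it furnishes a continuous injection $h\colon M\to R^{2+2m}$ which is a homeomorphism onto its image, where $h(M)$ is a \emph{definable} subset of $R^{2+2m}$ in the subspace topology. Thus the topological space $M$ is identified, via $h$, with a definable set, and the whole of the argument can be relocated to $R^{2+2m}$.

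The key step, and the one I expect to be the main obstacle, is verifying that $h$ transports the abstract-definable structure faithfully, so that the known normality of $h(M)$ can actually be invoked. Concretely, I would check that a subset $A\subseteq M$ is abstract-definable if and only if its image $h(A)$ is a definable subset of $h(M)$; since $h$ is a homeomorphism onto $h(M)$, this forces abstract-definable closed (respectively, open) subsets of $M$ to correspond bijectively to definable closed (respectively, open) subsets of $h(M)$, and vice versa. This is the bookkeeping that allows disjointness, closedness, and openness to be read off interchangeably in $M$ and in $h(M)$; it is morally the content of item (vi) recorded earlier in the excerpt, applied to the definable realization $h(M)$, but it must be spelled out chart by chart because $M$ itself need not be definable.

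Granting this correspondence, it remains to use that the definable set $h(M)$, with its subspace topology, is definably normal. This is the standard o-minimal fact that any two disjoint definable closed subsets of a definable set can be separated by disjoint definable open sets; one can see it directly, since $\mathcal{R}$ expands a real closed field and hence admits definable choice, by means of a definable continuous separating function built from definable distances to the two closed sets. Finally, given disjoint abstract-definable closed subsets $F_1, F_2\subseteq M$, their images $h(F_1), h(F_2)$ are disjoint definable closed subsets of $h(M)$; separating them by disjoint definable open sets $V_1, V_2\subseteq h(M)$ and putting $U_i\mathrel{\mathop:}= h^{-1}(V_i)$ yields disjoint abstract-definable open subsets of $M$ with $F_i\subseteq U_i$, which is exactly definable normality of $M$.
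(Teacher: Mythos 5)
Your proposal is correct and takes essentially the same approach as the paper: definable regularity (Theorem \ref{2}) feeds into the embedding theorem (Theorem 10.1.8 of \cite{vandendries-tame}) to produce the continuous injection $h\colon M\to R^{2+2m}$ onto the definable set $h(M)$, and the definable normality of $h(M)$ is then transferred back to $M$ via $h$. The chart-by-chart correspondence you highlight between abstract-definable subsets of $M$ and definable subsets of $h(M)$ is precisely what the paper compresses into the phrase ``can thus be transferred to $M$ via $h$.''
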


\begin{definition}
An abstract-definable subset $S$ of $M$ is called \textit{definably connected in $M$} if there are no abstract-definable open disjoint subsets $U$ and $V$ of $M$ in such a way that $U\cap S$ and $V\cap S$ are nonempty and $S\subseteq U\cup V$. We say that $M$ is \textit{definably connected} if its underlying set is definably connected in $M$. A \textit{definably connected component} of a nonempty abstract-definable set $S\subseteq M$ is a maximal definably connected subset of $S$ in $M$. 
\end{definition}

Theorem \ref{4} below is an abstract-definable version of Proposition 3.2.18 (\cite{vandendries-tame}, p. 57).

\begin{theorem}\label{4}
The abstract-definable $\mathcal{C}^p$ manifold $M$ has finitely many definably connected components. They form a finite partition of $M$, and consequently are open and closed in $M$.
\end{theorem}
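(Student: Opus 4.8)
The plan is to reduce the statement to the classical finiteness of definably connected components for definable subsets of $R^m$ (Proposition 3.2.18) and then glue along the \emph{finite} atlas $\mathcal{A}$. First I would use that each chart image $\phi_i(U_i)$ is a definable open subset of $R^m$, so by Proposition 3.2.18 it has finitely many definably connected components, which are themselves definable and, being components of an open set, open in $R^m$. Since $\phi_i$ is a definable homeomorphism onto its image, pulling these back yields a decomposition $U_i=\bigsqcup_{j=1}^{k_i} C_{i,j}$ of each chart domain into finitely many pairwise disjoint pieces, each of which is abstract-definable in $M$ (apply the transition maps to see that $\phi_k(U_k\cap C_{i,j})$ is definable for every $k$) and open in $M$ (as $U_i$ is open in $M$). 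Because $U_i$ is open, a subset of $U_i$ is definably connected in $M$ if and only if it is definably connected in $U_i$, so each $C_{i,j}$ is definably connected in $M$.

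The collection $\mathcal{C}=\{C_{i,j}\,:\,i\in\Lambda,\ 1\le j\le k_i\}$ is finite, by finiteness of $\Lambda$ and of each $k_i$, and covers $M$. Next I would introduce the overlap-chain relation on $\mathcal{C}$: declare $C\approx C'$ when there is a finite chain $C=D_0,D_1,\dots,D_\ell=C'$ of members of $\mathcal{C}$ with $D_t\cap D_{t+1}\neq\emptyset$ for every $t$. As $\mathcal{C}$ is finite, this is an equivalence relation with finitely many classes. For each class I would form the union $E$ of its members; then $E$ is abstract-definable and open (a finite union of abstract-definable open sets), and $E$ is definably connected by the elementary gluing fact that the union of two definably connected sets meeting in a point is definably connected (if $E=U\sqcup V$ splits it into disjoint abstract-definable open sets of $M$, each member of $\mathcal{C}$ in the class, being definably connected, lies entirely in $U$ or in $V$, and the overlaps force all of them onto the same side), iterated along chains.

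These finitely many sets $E_1,\dots,E_N$ then partition $M$: they cover $M$ because $\mathcal{C}$ does, and they are pairwise disjoint because two members of $\mathcal{C}$ in different classes cannot meet, an overlap placing them in the same class. Each $E_r$ is consequently also closed, its complement $\bigcup_{s\neq r}E_s$ being open. Finally I would check maximality: any definably connected $S\subseteq M$ lies in a single $E_r$, for otherwise $E_r$ and $M\setminus E_r$ would be disjoint abstract-definable open sets each meeting $S$ while $S\subseteq E_r\cup(M\setminus E_r)$, contradicting definable connectedness of $S$. Hence the $E_r$ are precisely the definably connected components, giving finiteness, the partition property, and openness and closedness simultaneously.

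The main obstacle I anticipate is the careful bookkeeping of the passage between local and global definable connectedness, that is, guaranteeing that the components of the chart domains stay abstract-definable, open, closed, and definably connected when viewed inside $M$ rather than only inside each $U_i$, together with a clean verification of the gluing lemma directly from the definition of definable connectedness in $M$. The genuinely o-minimal content is confined to the single invocation of Proposition 3.2.18; everything downstream is a finite combinatorial-topological argument that rides entirely on the finiteness of $\Lambda$ and of the component count in each chart.
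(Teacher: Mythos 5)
Your proof is correct, and it reaches the conclusion by a genuinely different organization than the paper's, differing precisely at the two points where a choice must be made. Where you apply Proposition 3.2.18 to each chart image $\phi_i(U_i)$ and work with the resulting finite \emph{open cover} of $M$ by the pulled-back definably connected components $C_{i,j}$, the paper first disjointifies the chart domains, replacing $U_i$ by $U_i\setminus\bigcup_{j<i}U_j$, and takes a cell decomposition of $R^m$ partitioning the images; pulling back the cells (which are definably connected) produces a finite honest \emph{partition} $\mathfrak{D}$ of $M$ rather than a cover with overlaps. Consequently the gluing steps differ: you merge overlapping pieces by the chain-equivalence relation and verify directly that the union of each class is definably connected, open, closed, and maximal; the paper instead considers all unions $D_\Lambda$ over subsets of indices of $\mathfrak{D}$, selects those maximal with respect to definable connectedness, and proves (its Claim 1, via the auxiliary union $D_Y$ of pieces meeting a given definably connected set $Y$) that any definably connected set meeting such a maximal union is contained in it. The trade-off is this: your basic pieces are open for free (components of an open definable set are open in $R^m$, and $U_i$ is open in $M$), so openness and closedness of the components are immediate from the partition $E_1,\dots,E_N$, whereas the paper's cells need not be open in $M$ and it must recover closedness through the fact that closures of definably connected sets are definably connected, and openness only afterwards by complementation; on the other hand, the paper's disjointified partition makes disjointness of the candidate components automatic from maximality, with no need for your chain relation. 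Both arguments confine the o-minimal input to a single finiteness statement about definable subsets of $R^m$ — finiteness of definably connected components in your case, cell decomposition in the paper's — transported through the finiteness of the atlas in exactly the same way.
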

\begin{proof}
 
 Let $\mathcal{A}=\{(U_i,\phi_i)\,:\, i=1,\ldots,k\}$. Since the subsets $\phi_1(U_1)$, ..., $\phi_i(U_i\setminus \bigcup_{j<i} U_j)$, ..., $\phi_k(U_k\setminus \bigcup_{j=1}^{k-1}U_j)$ of $R^m$ are definable, there is a cell decomposition $\mathfrak{C}$ of $R^m$ partitioning them. We claim that for each $i=2,\ldots,k$ and for any cells $C\subseteq  \phi_1(U_1)$ and $C'\subseteq \phi_i(U_i\setminus \bigcup_{j<i}U_j)$ in $\mathfrak{C}$, the sets $$
 D\mathrel{\mathop :}=\phi_1^{-1}(C)\subseteq U_1\ \text{and}\ D'\mathrel{\mathop :}= \phi_i^{-1}(C')\subseteq U_i\setminus \bigcup_{j<i}U_j
 $$ 
 are definably connected in $M$. First, note that $D$ and $D'$ are abstract-definable, inasmuch as for any chart $(U_l,\phi_l)\in \mathcal{A}$ the sets 
 $$
 \phi_l(U_l\cap D)=(\phi_l\circ \phi_1^{-1})(\phi_1(U_l\cap U_1)\cap C)
 $$ 
 and 
 \[
    \phi_l(U_l\cap D')= 
\begin{cases}
    (\phi_l\circ \phi_i^{-1})(\phi_i(U_l\cap U_i)\cap C'),& \text{if } i\leq l\\
    \emptyset,& \text{if } i>l
\end{cases}
\]
 are all definable. Furthermore, if $A$ and $B$ are  abstract-definable disjoint open subsets of $M$ with $D\subseteq A\cup B$, then since $\phi_1(U_1\cap A)$ and $\phi_1(U_1\cap B)$ are disjoint open definable subsets of $\phi_1(U_1)$ covering $C$, we have without loss of generality that $C\subseteq \phi_1(U_1\cap A)$, and consequently $D\subseteq U_1\cap A\subseteq A$. A similar argument holds for $D'$. Therefore, we obtain a partition $\mathfrak{D}\mathrel{\mathop:}= \{D_1,\ldots,D_s\}$ of $M$ into definably connected sets in $M$, where each element of $\mathfrak{D}$ is either of the form $\phi_1^{-1}(C)$ for some cell $C\in \mathfrak{C}$ included in $\phi_1(U_1)$, or of the form $\phi_i^{-1}(C')$ for some $i\in \{2,\ldots,k\}$ and a cell $C'\in \mathfrak{C}$ included in $\phi_i(U_i\setminus \bigcup_{j<i} U_j)$. For each set of indices $\Lambda\subseteq \{1,\ldots,s\}$, define $D_\Lambda\mathrel{\mathop:}= \bigcup_{i\in \Lambda} D_i$, and let $D^*$ be a maximal abstract-definable set with respect to the definable connectedness, among the $2^s-1$ nonempty sets $D_\Lambda$. Note that to conclude $D^*$ is a definably connected component of $M$, the subsequent claim suffices.
 \\

\noindent\textbf{Claim 1}.\label{6,81}
If $Y\subseteq M$ is a definably connected set in $M$ with $Y\cap D^*\neq \emptyset$, then $Y\subseteq D^*$.

Consider the abstract-definable set
$$
D_Y\mathrel{\mathop:}= \underset{D_i\cap Y\neq \emptyset}{\bigcup\limits_{D_i\in \mathfrak{D}}}D_i.
$$
Observe that $Y\subseteq D_Y$, since $\mathfrak{D}$ covers $M$. If $A$ and $B$ are disjoint abstract-definable open subsets of $M$ so that $D_Y\subseteq A\cup B$, then because $Y$ is definably connected, we may assume that $Y\subseteq A$ without loss of generality. This implies that each $D_i$ in $\mathfrak{D}$ with $D_i\cap Y\neq \emptyset$ intersects $A$, and since $D_i$ is definably connected in $M$, $D_i\subseteq A$. Hence, $D_Y\subseteq A$. In other words, $D_Y$ is definably connected. Finally, note that
$$
D^*,D_Y\supseteq D^*\cap D_Y\supseteq D^*\cap Y\neq \emptyset,
$$
\textit{i.e.}, $D^*$ and $D_Y$ have a point in common. Then, $D^*\cup D_Y$ is a set of the form $D_{\Lambda}$, for some $\Lambda\subseteq \{1,\ldots,s\}$, which is definably connected in $M$ and contains $D^*$. By the maximality of $D^*$, we get $D^*=D^*\cup D_Y$, and hence $Y\subseteq D_Y\subseteq D^*$.
\\

We now draw the reader's attention to the fact that the maximal definably connected sets $D^*$ as above form a finite partition of $M$. Clearly, there are finitely many of those sets, in total. Moreover, since $\mathfrak{D}$ covers $M$ and each of its elements is contained in such a maximal definably connected set $D^*$, these sets then cover $M$. Lastly, Claim 1 implies that the sets $D^*$ are pairwise disjoint.

For the ending part of the proposition statement, note that the closure in $M$ of a definably connected set in $M$ is definably connected as well, and hence by the maximality of the definably connected components these are closed subsets of $M$. Let $\{D^*_1,\ldots,D^*_t\}$ be a partition of $M$ into definably connected components. Since for each $j$  
$$
D^*_j=M\setminus (D^*_1\cup \cdots \cup D^*_{j-1}\cup D^*_{j+1}\cup \cdots \cup D^*_t),
$$ 
it follows that $D^*_j$ is open in $M$. 
\end{proof}

The subsequent theorem, among other things, is used to compute the $0$th de Rham cohomology group of an abstract-definable manifold (see Theorem \ref{2dr}). 

\begin{theorem}\label{6,9}
A locally constant abstract-definable map $f\colon M\to N$ is constant whenever $M$ is definably connected.
\end{theorem}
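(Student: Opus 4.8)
The plan is to transcribe the classical argument that a locally constant map out of a connected space is constant, taking care at each step to remain inside the abstract-definable category so that the hypothesis of definable connectedness can actually be applied. Fix a point $x_0\in M$ and set $c:=f(x_0)\in N$. I will consider the fibre $A:=f^{-1}(\{c\})$ together with its complement $B:=M\setminus A=f^{-1}(N\setminus\{c\})$, and show that $\{A,B\}$ is a partition of $M$ into two disjoint abstract-definable open sets with $A\neq\emptyset$. Definable connectedness of $M$ will then force $B=\emptyset$, which is precisely the assertion that $f\equiv c$.

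First I would record that $f$ is continuous: being locally constant, $f$ is constant on some neighbourhood of each point, hence continuous there. Consequently, for any charts $(U,\phi)\in\mathcal{A}$ and $(V,\psi)\in\mathcal{B}$ the set $U\cap f^{-1}(V)$ is open, so $\phi(U\cap f^{-1}(V))$ is an open subset of $R^m$, and on it the representation $\psi\circ f\circ\phi^{-1}$ is a \emph{definable} locally constant map; such a map is of class $\mathcal{C}^p$, since all of its partial derivatives vanish. Thus the merely abstract-definable $f$ is in fact abstract-definable $\mathcal{C}^p$, and the preimage and boolean-algebra properties listed after the definition of an abstract-definable map become available.

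The core step is to verify that $A$ and $B$ are abstract-definable. The singleton $\{c\}$ is abstract-definable in $N$, because for every chart $(V,\psi)\in\mathcal{B}$ the set $\psi(V\cap\{c\})$ is either empty or a single point of $R^n$, hence definable. By property (iv) of that list the fibre $A=f^{-1}(\{c\})$ is then abstract-definable in $M$, and by the boolean-algebra property (i) its complement $B$ is abstract-definable as well. Openness is immediate from local constancy: if $x\in A$ then $f$ is constant equal to $c$ on a neighbourhood of $x$, so that neighbourhood is contained in $A$; symmetrically, at a point of $B$ the value of $f$ is $\neq c$ on a whole neighbourhood, so $B$ is open.

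Finally, $A$ and $B$ are disjoint abstract-definable open subsets of $M$ with $M=A\cup B$ and $x_0\in A$, so $A$ is nonempty. Were $B$ nonempty as well, the pair $(A,B)$ would witness that $M$ is not definably connected, contradicting the hypothesis; hence $B=\emptyset$, that is, $f^{-1}(\{c\})=M$, and $f$ is the constant map with value $c$. I expect no serious obstacle here. The only point demanding genuine care—and the one on which the whole argument rests—is the abstract-definability of the fibre $A$, since definable connectedness is phrased purely in terms of abstract-definable open sets; everything else is a faithful rendering of the standard topological proof.
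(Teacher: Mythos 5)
Your proposal is correct and follows essentially the same route as the paper: both arguments show that the fibre $f^{-1}(c)$ is abstract-definable and clopen (you via openness of the complement from local constancy, the paper via closedness of $\{c\}$ and continuity), and then invoke definable connectedness to conclude the fibre is all of $M$. Your extra care in upgrading $f$ to an abstract-definable $\mathcal{C}^p$ map before citing the preimage property is a harmless refinement the paper glosses over.
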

\begin{proof}
It suffices to show that $f$ is constant on each definably connected component of $M$. To see this, first note that $f$ is continuous. For any definably connected component $C$ of $M$ and a fixed point $x$ of $C$, it follows from the local constancy of $f$ that $f^{-1}(c)$ is a union of open sets in $M$, where $c$ denotes the value $f(x)$. On the other hand, since $\{c\}$ is an abstract-definable closed set in $N$, $f^{-1}(c)$ is abstract-definable closed in $M$. Because $C$ is definably connected, we thus get $C\subseteq f^{-1}(c)$.
\end{proof}

Our approach to the construction of the tangent space is the same as in Chapter 9 (\cite{ibjorgen2001}, pp. 65-68).

Fix $x\in M$ and consider the set $\mathcal{C}^p(x)$ of all abstract-definable $\mathcal{C}^p$ maps $\alpha\colon I\to M$, where $I\subseteq R$ is an open interval containing $0$ and $\alpha(0)=x$, on which we have an equivalence relation 
$$
\alpha_1\sim_c \alpha_2\stackrel{\text{def}}{\Leftrightarrow} (\phi\circ \alpha_1)'(0)=(\phi\circ \alpha_2)'(0),
$$
for some $(U,\phi)$ chart on $M$ at $x$. By virtue of the chain rule for definable maps, we may replace the condition ``for some chart on $M$ at $x$'' with ``for any chart on $M$ at $x$'' in the definition  of $\sim_c$. The quotient set $\mathcal{C}^p(x)/\sim_c$ is denoted by $T_x M$.  

If $(U,\phi)$ is a chart on $M$ at $x$, the induced map $\Phi_x\colon T_x M\to R^m$ defined as $[\alpha]\mapsto (\phi\circ \alpha)'(0)$ is bijective, and hence there is a unique $R$-vector space structure on $T_x M$ which makes $\Phi_x$ into a linear isomorphism, namely: $v+w\mathrel{\mathop:}=\Phi_x^{-1}(\Phi_x(v)+\Phi_x(w))$ and $rv\mathrel{\mathop:}=\Phi_x^{-1}(r\Phi_x(v))$, for $v,w\in T_x M, r\in R$. These operations are independent of the choice of $(U,\phi)$. The set $T_x M$ together with such a linear structure is called the \textit{tangent space} to $M$ at $x$ and its elements are said to be the \textit{tangent} \textit{vectors} to $M$ at $x$.

An abstract-definable $\mathcal{C}^p$ map $f\colon M\to N$ induces at each point $x\in M$ a linear map $d_x f\colon T_x M\to T_{f(x)}N$, the \textit{differential} of $f$ at $x$, by setting $d_x f ([\alpha])\mathrel{\mathop:}=[f\circ \alpha]$. Under the identification $T_x R^m\equiv R^m$, we obtain $d_x \phi=\Phi_x$. 

An immediate consequence of the definition of the differential of an abstract-definable $\mathcal{C}^p$ map at a point is the chain rule for abstract-definable $\mathcal{C}^p$ maps.

\begin{theorem}
Let $P$ be an abstract-definable $\mathcal{C}^p$ manifold, and let $f\colon M\to N$ and $g\colon N\to P$ be abstract-definable $\mathcal{C}^p$ maps. Then $g\circ f\colon M\to P$ is abstract-definable $\mathcal{C}^p$, and for any point $x$ in $M$ we have 
$$
d_x(g\circ f) = d_{f(x)} g \circ d_x f.
$$
\end{theorem}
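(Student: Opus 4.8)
The plan is to prove the two assertions in turn: first that $g\circ f$ is abstract-definable $\mathcal{C}^p$, and then the differential identity, which I expect to fall out almost formally from the definition $d_x h([\beta])=[h\circ\beta]$ together with associativity of composition.

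For the first assertion I would argue locally, using that membership in the class $\mathcal{C}^p$ is a local property and that abstract-definable $\mathcal{C}^p$ maps are continuous. Fix $x\in M$ and choose charts $(U,\phi)\in\mathcal{A}$ at $x$, $(V,\psi)\in\mathcal{B}$ at $f(x)$, and $(W,\chi)$ in the atlas of $P$ at $g(f(x))$. Since $f$ and $g\circ f$ are continuous, I may pass to the abstract-definable open set $U':=U\cap f^{-1}(V)\cap(g\circ f)^{-1}(W)$, which contains $x$ and whose image $\phi(U')$ is a definable open subset of $R^m$. On $\phi(U')$ the coordinate representation of $g\circ f$ factors as
$$
\chi\circ(g\circ f)\circ\phi^{-1}\big|_{\phi(U')}=\big(\chi\circ g\circ\psi^{-1}\big)\circ\big(\psi\circ f\circ\phi^{-1}\big),
$$
where the inner and outer maps are the coordinate representations of $f$ and $g$, each a definable $\mathcal{C}^p$-map by hypothesis, and the restriction to $\phi(U')$ is precisely what guarantees that the image of the inner map lands in $\psi(V\cap g^{-1}(W))$, the domain of the outer one. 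Because the composition of definable $\mathcal{C}^p$-maps between open subsets of Euclidean spaces is again a definable $\mathcal{C}^p$-map (the ordinary chain rule in the definable setting), the displayed map is $\mathcal{C}^p$ near $\phi(x)$; as $x$ and the charts were arbitrary, $g\circ f$ is abstract-definable $\mathcal{C}^p$.

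For the chain rule I would fix $x\in M$ and an arbitrary class $[\alpha]\in T_xM$, represented by an abstract-definable $\mathcal{C}^p$ curve $\alpha\colon I\to M$ with $\alpha(0)=x$. The composite $f\circ\alpha\colon I\to N$ is abstract-definable $\mathcal{C}^p$ (a special case of the assertion just proved) and satisfies $(f\circ\alpha)(0)=f(x)$, so it represents a well-defined class in $T_{f(x)}N$, which by definition of the differential is exactly $d_xf([\alpha])=[f\circ\alpha]$. Applying the definition a second time gives
$$
d_{f(x)}g\big(d_xf([\alpha])\big)=d_{f(x)}g\big([f\circ\alpha]\big)=[g\circ(f\circ\alpha)],
$$
whereas $d_x(g\circ f)([\alpha])=[(g\circ f)\circ\alpha]$. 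Since $g\circ(f\circ\alpha)=(g\circ f)\circ\alpha$ as maps $I\to P$, the two classes coincide, and as $[\alpha]$ ranges over all of $T_xM$ this yields $d_x(g\circ f)=d_{f(x)}g\circ d_xf$.

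I do not expect a genuine obstacle here; the points demanding attention are purely bookkeeping. One must shrink the chart domains so that each coordinate representation maps into the domain of the next, and one must invoke the already-established well-definedness of $d_xf$ so as to know that $[f\circ\alpha]$ depends only on $[\alpha]$ and not on the chosen representative $\alpha$. Once the class $\mathcal{C}^p$ is seen to be stable under composition, the identity of differentials is entirely formal, which matches the author's preceding remark that it is an immediate consequence of the definition of the differential.
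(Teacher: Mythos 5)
Your proposal is correct, and it takes the only approach available: the paper offers no proof of this theorem at all, stating it as ``an immediate consequence of the definition of the differential,'' and your argument supplies exactly the details that remark leaves implicit --- the factorization $\chi\circ(g\circ f)\circ\phi^{-1}=(\chi\circ g\circ\psi^{-1})\circ(\psi\circ f\circ\phi^{-1})$ on suitably shrunken chart domains for the $\mathcal{C}^p$ claim, and the formal computation $[g\circ(f\circ\alpha)]=[(g\circ f)\circ\alpha]$ for the identity of differentials. Your attention to the two bookkeeping points (restricting to $U'$ so that images land in the domain of the next coordinate representation, and invoking well-definedness of $d_xf$ so that $[f\circ\alpha]$ depends only on $[\alpha]$) is precisely what makes the ``immediate'' claim rigorous.
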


Given a chart $(U,\phi)$ at a point $x\in M$, the set $\{\partial/\partial x^1|_x,\ldots,\partial/\partial x^m|_x\}$ forms a basis for $T_x M$, where $\partial /\partial x^i|_x$ is $(d_x \phi)^{-1}(e_i)$ and $e_i$ denotes the $i$th standard basis vector $(0,\ldots,1,\ldots,0)$ of $R^m$. Hence, a tangent vector $X_x\in T_x M$ can be uniquely written as $X_x=\sum_{i=1}^ma_i(\partial/\partial x^i|_x)$, with $(a_1,\ldots,a_m)\in R^m$. If $X_x=[\alpha]$, for some $\alpha\in \mathcal{C}^p(x)$, then $(a_1,\ldots,a_m)=(\phi\circ \alpha)'(0)$. 

Let $f\colon M\to R$ be an abstract-definable $\mathcal{C}^p$ function. The \textit{directional derivative} $X_x f$ of $f$ at $x\in M$ is defined to be $(f\circ \alpha)'(0)$. If $(U,\phi)$ is a chart at $x$ then applying the chain rule (for definable maps) to $(f\circ \phi^{-1}\circ \phi\circ \alpha)'(0)$, we get  
$$
X_x f=\sum_{i=1}^ma_i(\partial(f\circ \phi^{-1})/\partial r^i) (\phi(x)),
$$ 
where $a_i$ are the components of $X_x$ in the basis $\{\partial/\partial x^i|_x\}_i$. Particularly, 
$$
(\partial/\partial x^i|_x)f=(\partial(f\circ \phi^{-1})/\partial r^i)(\phi(x)).
$$

The disjoint union of all tangent spaces $TM\mathrel{\mathop:}=\bigcup_{x\in M}\{x\}\times T_x M$ is called the \textit{tangent bundle} of $M$. The set $TM$ can be made into an abstract-definable $\mathcal{C}^p$ manifold of dimension $2m$ as follows. Let $(U,\phi)$ be a chart on $M$ and denote by $TU$ the disjoint union $\bigsqcup_{x\in U} T_x M$. The set of maps $\widetilde{\phi}\colon TU\to \phi(U)\times R^m$, given by 
$$
(x,\sum_{i=1}^m a_i \partial/\partial x^i|_x)\mapsto (\phi(x),a_1,\ldots,a_m),
$$ 
forms an abstract-definable $\mathcal{C}^p$ atlas on $TM$. Therefore, the projection $\pi\colon TM\to M\colon (x,v)\mapsto x$ is an abstract-definable $\mathcal{C}^p$ map.

The \textit{cotangent space} of $M$ at a point $x\in M$, $T^*_xM$, is the dual vector space of the tangent space $T_x M$ and its elements are called \textit{covectors} at $x$. The disjoint union of all cotangent spaces of $M$ is said to be the \textit{cotangent bundle} of $M$ and is denoted by $T^* M$. Just like the tangent bundle, the cotangent bundle of $M$ can be endowed with an abstract-definable $\mathcal{C}^p$ atlas of dimension $2m$, described as follows. After fixing a chart $(U,\phi)$ on $M$ at $x$, we let $\{dx^1|_x,\ldots,dx^m|_x\}$ denote the dual basis of $\{\partial/\partial x^1|_x,\ldots,\partial/\partial x^m|_x\}$ for $T_x^*M$. The set of the induced bijections 
$$
(x,\sum_{i=1}^m b_idx^i|_x)\mapsto (\phi(x),b_1,\ldots,b_m)\colon T^*U\to \phi(U)\times R^m
$$ 
then forms an abstract-definable $\mathcal{C}^p$ atlas on $T^*M$, where $T^*U$ denotes the disjoint union $\bigsqcup_{x\in U} T^*_x M$. Likewise, the natural projection $\pi\colon T^*M\to M$ turns out to be an abstract-definable $\mathcal{C}^p$ map.

\section{Abstract-definable partition of unity}

This section is devoted to the construction of an abstract-definable $\mathcal{C}^p$ partition of unity subordinate to a given abstract-definable $\mathcal{C}^p$ atlas, and some of its consequences whose classical analogues are widely known. The strategy adopted here is that of Fischer \cite{fischer2008}. 

Using Generalized Lojasiewicz Inequality (\cite{vandendries-miller-geometric}, Theorem C.14) and a stratification of definable sets where the functions involved in the strata have bounded gradient, Thamrongthanyalak obtains a result on smoothing of definable continuous functions, stated as follows.

\begin{theorem}[Theorem 1.1, \cite{Athipat2013}, p. 2]\label{2pu}
Let $f\colon U\to R$ be a definable continuous function, with $U$ open in $R^n$. Let $Z$ be a definable closed subset of $U$ such that $\dim\, Z<\dim\, U$, and $f|_{(U\setminus Z)}$ is $\mathcal{C}^p$, where $p\geq 1$. Let $\epsilon \colon U\to ]0,+\infty[$ be a definable continuous function. Then, for any definable neighbourhood $V$ of $Z$ in $U$, there is a definable $\mathcal{C}^p$ function $g\colon U\to R$ such that
\begin{enumerate}
\item[(1)] $|g(x)-f(x)|<\epsilon(x)$, for every $x\in U$;
\item[(2)] $g=f$ outside $V$.
\end{enumerate}
\end{theorem}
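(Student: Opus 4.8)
The plan is to prove the statement by induction on $d\mathrel{\mathop:}=\dim Z$, at each stage smoothing $f$ across the top-dimensional part of the current bad set by a modification confined to a thin definable tube, so that both the estimate $|g-f|<\epsilon$ and the rigidity $g=f$ off $V$ are maintained automatically, rather than through a global cut-off (whose construction would itself be an instance of the theorem). The base case $Z=\emptyset$ is trivial, since then $f$ is already $\mathcal{C}^p$ on $U$ and $g=f$ works. For the inductive step, o-minimal stratification compatible with $f$ lets me write $Z=S\sqcup Z'$, where $S$ is the union of the $d$-dimensional strata---a definable $\mathcal{C}^p$ submanifold of $U$ on which, after refinement, $f|_S$ is $\mathcal{C}^p$---and $Z'$ is definable and closed with $\dim Z'<d$ (here $Z'$ absorbs the frontier $\mathrm{cl}(S)\setminus S$). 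It then suffices to produce a definable $f_1$, continuous on $U$, $\mathcal{C}^p$ off $Z'$, equal to $f$ outside a tube about $S$ lying inside $V$, and with $|f_1-f|<\epsilon/2$; applying the inductive hypothesis to $(f_1,Z',\epsilon/2,V)$ and composing then yields $g$.

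For the single-stratum step I would pass to a definable $\mathcal{C}^p$ tubular neighbourhood of $S$ (away from $Z'$), with coordinates $(s,t)$ in which $S=\{t=0\}$ and $Z$ meets the tube only in $S$. Transverse to $S$ the picture is exactly the one-variable model: for fixed $s$ the function $t\mapsto f(s,t)$ is continuous and $\mathcal{C}^p$ for $t\neq 0$. I smooth it fibrewise on a normal slab $\{|t|<\rho(s)\}$, interpolating between $f$ near the slab boundary and the $\mathcal{C}^p$ profile $q(s,t)\mathrel{\mathop:}=f(s,0)$ across $t=0$, by means of an \emph{explicit} semialgebraic $\mathcal{C}^p$ transition function $\chi(t)$ equal to $1$ near $\pm\rho(s)$ and to $0$ near $t=0$; concretely $f_1\mathrel{\mathop:}=\chi f+(1-\chi)q$ inside the slab and $f_1\mathrel{\mathop:}=f$ outside it. Because the modification is supported in the slab, $f_1=f$ off the tube with no cut-off at all, and taking $\rho$ small exploits the uniform continuity of $f$ to keep $|f_1-f|<\epsilon/2$ and the slab inside $V$.

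The quantitative heart is to ensure that $f_1$ is genuinely $\mathcal{C}^p$ across $S$ and that its regularity off $Z'$ survives near the frontier $\mathrm{cl}(S)\setminus S\subseteq Z'$. Here the Generalized {\L}ojasiewicz Inequality (van den Dries--Miller, Theorem C.14) supplies an integer $N$ and a definable continuous bound with $|\partial^\alpha f(x)|\le C\,d(x,Z)^{-N}$ for all $1\le|\alpha|\le p$ on $U\setminus Z$, while a stratification in which the strata functions have bounded gradient guarantees these estimates with uniform exponents along $S$. Calibrating the slab radius $\rho(s)$ as a definable function that decays fast enough (in terms of $d(s,Z')$ and the exponent $N$) forces the partial derivatives of the fibrewise interpolant up to order $p$ to converge as $t\to 0$ and as $s$ tends to the frontier, which is precisely the assertion that $f_1$ is $\mathcal{C}^p$ across $S$ and remains $\mathcal{C}^p$ off $Z'$.

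I expect the main obstacle to be exactly this uniformity near the frontier: the tube about $S$ must pinch off as it approaches the lower strata $Z'$, at a rate dictated by the {\L}ojasiewicz exponent, so that the smoothing carried out at this stage does not spoil $\mathcal{C}^p$-regularity when the induction subsequently operates near $Z'$. Classically one would simply mollify, but convolution leaves the o-minimal category; the whole role of the {\L}ojasiewicz estimates together with the bounded-gradient stratification is to furnish explicit, definable substitutes for that analytic control. Marrying these estimates to the tube-by-tube induction while keeping every object definable is the delicate part of the argument.
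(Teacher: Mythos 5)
First, a point of order: the paper does not prove this statement at all --- it is quoted, with attribution (it is even labelled ``Theorem 1.1, [Athipat2013], p.~2''), from Thamrongthanyalak's paper on definable smoothing, and the surrounding text only records that his proof rests on the Generalized {\L}ojasiewicz Inequality together with a stratification whose defining functions have bounded gradient. So there is no in-paper argument to compare yours against; the only question is whether your sketch would stand on its own as a proof of the cited result.

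As written, it would not, for two concrete reasons. First, the load-bearing objects of your single-stratum step --- a definable $\mathcal{C}^p$ tubular neighbourhood of $S$, and above all a \emph{definable $\mathcal{C}^p$ positive} slab radius $\rho$ lying below a prescribed definable positive bound (the bound that keeps the slab inside $V$ and $U$, makes $|f(s,t)-f(s,0)|$ small, and pinches near $\text{cl}(S)\setminus S$ so that $f_1$ is continuous at $Z'$) --- are themselves instances of the kind of statement you are trying to prove: producing a definable $\mathcal{C}^p$ minorant of a definable positive function is a smoothing problem. Your induction is on $\dim Z$ with the ambient space fixed, so the inductive hypothesis cannot be applied to $\rho_0\colon S\to\,]0,+\infty[$, which lives on a $d$-dimensional manifold rather than on an open subset of $R^n$; you would need either a double induction that also descends in the ambient dimension, or a separate non-circular lemma, and neither is supplied. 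Second, your ``quantitative heart'' does no work in your own construction: since $\chi\equiv 0$ on a neighbourhood of $t=0$, the interpolant $f_1$ coincides near $S$ with the pullback $q=f|_S\circ(\text{projection})$, so $\mathcal{C}^p$-regularity across $S$ is automatic and requires no bounds whatsoever on the derivatives of $f$ near $Z$; likewise regularity ``near the frontier'' is vacuous, because the frontier lies in $Z'$, where nothing is demanded of $f_1$ beyond continuity (which follows from the pinching of $\rho$ at \emph{any} rate of decay, with no {\L}ojasiewicz exponent involved). So the estimates you present as the crux are irrelevant to the construction you describe, while the steps that actually carry the argument are exactly the ones passed over. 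For what it is worth, your overall architecture --- induction on the dimension of the singular set with stratum-by-stratum fibrewise interpolation --- is a genuinely different, more geometric route than the bounded-gradient-stratification argument the paper attributes to Thamrongthanyalak, and it could plausibly be completed; but only once the minorant and tube lemmas are established independently.
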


If $f\colon U\to R$ is a definable function defined on an open set $U\subseteq R^m$ then from the $\mathcal{C}^p$-cell decomposition it follows that the dimension of the closure in $U$ of the definable set comprised of the points in $U$ at which $f$ is not $\mathcal{C}^p$ is strictly less than that of $U$. 

The subsequent lemma is Corollary 1.2 in \cite{fischer2008} (p. 497) whose proof was adjusted to our case.

\begin{lemma}\label{3pu}
Let $U\subseteq R^m$ be a definable open set, and let $A,B\subseteq U$ be definable disjoint sets, which are closed in $U$. Then, there is a definable $\mathcal{C}^p$ function $f\colon U\to R$ such that $A\subseteq \{f=1\}$ and $B\subseteq \{f=0\}$.
\end{lemma}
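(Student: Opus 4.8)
The plan is to first produce a definable \emph{continuous} function that already separates $A$ (value $1$) from $B$ (value $0$), and then to smooth it by means of Theorem \ref{2pu}, arranging matters so that the smoothing step leaves the prescribed values on $A$ and $B$ completely untouched. We may assume $A$ and $B$ are nonempty, the degenerate cases being settled by the constant functions $1$ and $0$.

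\emph{Continuous separation.} For a nonempty definable $S\subseteq R^m$ I would set $d_S(x):=\inf\{\|x-s\|:s\in S\}$, where $\|\cdot\|$ is the (definable) Euclidean norm; for fixed $x$ the set $\{\|x-s\|:s\in S\}$ is a nonempty definable subset of $R_{\ge 0}$, so its infimum exists in $R$ by o-minimality, and $x\mapsto d_S(x)$ is a definable, $1$-Lipschitz, hence continuous function with $d_S(x)=0$ iff $x\in \mathrm{cl}_{R^m}(S)$. Since $A,B$ are closed in $U$ we have $\mathrm{cl}_{R^m}(A)\cap U=A$ and $\mathrm{cl}_{R^m}(B)\cap U=B$, so $d_A+d_B$ has no zero on $U$ (a common zero would lie in $A\cap B=\emptyset$). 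Hence
\[
g:=\frac{d_B}{d_A+d_B}\colon U\to R
\]
is a well-defined definable continuous function with $0\le g\le 1$, $g\equiv 1$ on $A$ and $g\equiv 0$ on $B$.

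\emph{Pre-smoothing near $A\cup B$.} Next I would fix a definable $\mathcal{C}^p$ ``smoothstep'' $\theta\colon R\to R$ with $0\le\theta\le1$, $\theta\equiv0$ on $]-\infty,1/3]$ and $\theta\equiv1$ on $[2/3,+\infty[$; for finite $p$ such a $\theta$ can be taken to be an explicit polynomial on $[1/3,2/3]$ matching the two constant values together with all derivatives up to order $p$ at the endpoints, so it is definable over any real closed field. Put $g_1:=\theta\circ g$. Then $g_1$ is definable and continuous, $g_1\equiv1$ on $A$, $g_1\equiv0$ on $B$, and---crucially---$g_1$ is \emph{constant} (thus $\mathcal{C}^p$) on the open set $W:=\{g>2/3\}\cup\{g<1/3\}$, which contains $A\cup B$ because $A\subseteq\{g=1\}$ and $B\subseteq\{g=0\}$. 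Let $Z$ be the closure in $U$ of the set of points at which $g_1$ fails to be $\mathcal{C}^p$. By the observation following Theorem \ref{2pu}, $Z$ is definable, closed in $U$, and $\dim Z<m=\dim U$; moreover $Z\subseteq U\setminus W$, so $Z$ is disjoint from $A\cup B$. Now take $V:=U\setminus(A\cup B)$, which is open in $U$ and contains $Z$, and let $\epsilon\colon U\to\,]0,+\infty[$ be any definable continuous function. Applying Theorem \ref{2pu} to $g_1$, $Z$, $\epsilon$ and $V$ yields a definable $\mathcal{C}^p$ function $f\colon U\to R$ with $f=g_1$ outside $V$; in particular $f=g_1$ on $A\cup B$, so $f\equiv1$ on $A$ and $f\equiv0$ on $B$, as required.

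I expect the only genuinely delicate point to be the pre-smoothing step: Theorem \ref{2pu} only guarantees the approximation $|f-g_1|<\epsilon$ with $\epsilon$ strictly positive, and so cannot by itself be made to force equality on the (possibly lower-dimensional) sets $A$ and $B$. Composing with $\theta$ is exactly what repairs this, converting $g$ into a function that is already $\mathcal{C}^p$ on a neighbourhood of $A\cup B$, so that the region $V$ in which the smoothing actually alters the function can be chosen to avoid $A\cup B$ entirely.
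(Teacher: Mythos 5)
Your proof is correct, and its decisive second half coincides with the paper's: both arguments hinge on first producing a definable continuous separating function that is \emph{locally constant on open neighbourhoods} of $A$ and $B$, so that the closure $Z$ of its non-$\mathcal{C}^p$ locus is disjoint from $A\cup B$ and has dimension $<\dim U$, and then invoking Theorem \ref{2pu} with a definable neighbourhood $V$ of $Z$ avoiding $A\cup B$, so that the clause ``$f=g$ outside $V$'' preserves the exact values $1$ on $A$ and $0$ on $B$. Where you genuinely differ is in how that locally constant separating function is obtained. The paper uses the definable normality of $U$ to choose definable open sets $V_A\supseteq A$ and $V_B\supseteq B$ with disjoint closures in $U$, and then cites the definable Urysohn-type result (Lemma 6.3.8 of \cite{vandendries-tame}) to get a continuous definable $g$ with $g^{-1}(1)=\text{cl}_U(V_A)$ and $g^{-1}(0)=\text{cl}_U(V_B)$; constancy of $g$ on the open sets $V_A$ and $V_B$ is then automatic. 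You instead build $g=d_B/(d_A+d_B)$ from definable distance functions --- which is $1$ exactly on $A$ and $0$ exactly on $B$, hence \emph{not} locally constant there --- and repair this by composing with a semialgebraic $\mathcal{C}^p$ smoothstep $\theta$, making $\theta\circ g$ constant on the open set $\{g>2/3\}\cup\{g<1/3\}\supseteq A\cup B$. Your route is more self-contained: it needs neither definable normality of $U$ nor the cited Urysohn lemma, only definable completeness (for the infimum defining $d_S$) and elementary Hermite interpolation for $\theta$, at the cost of the extra composition step; the paper's route gets local constancy for free from the Urysohn-type statement but leans on more machinery. Your closing observation --- that approximation alone can never force exact equality on the possibly lower-dimensional sets $A$ and $B$, so some pre-processing making $g$ already $\mathcal{C}^p$ near $A\cup B$ is indispensable --- is precisely the point of the paper's arrangement as well.
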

\begin{proof}
Since $U$ is definably normal, there are definable open sets $V_A$ and $V_B$ in $U$ such that $A\subseteq V_A\subseteq \text{cl}_U(V_A)\subseteq U\setminus B$ and $B\subseteq V_B\subseteq \text{cl}_U(V_B)\subseteq U\setminus A$. In particular, $\text{cl}_U(V_A)\cap \text{cl}_U(V_B)=\emptyset$. Consider $g\colon U\to R$ a definable continuous function with $0\leq g\leq 1$, $g^{-1}(1)=\text{cl}_U(V_A)$, and $g^{-1}(0)=\text{cl}_U(V_B)$ (Lemma 6.3.8, \cite{vandendries-tame},  p. 102), and let $C$ be the definable set of points in $U$ at which $g$ is not $\mathcal{C}^p$. Because $C$ is contained in $U\setminus (V_A\cup V_B)$, we get $\text{cl}_U(C)\cap (A\cup B)=\emptyset$. Also, $\text{cl}_U(C)<\dim\, U$ (see the observation above this lemma). Thus, by Lemma \ref{2pu}, there is a definable $\mathcal{C}^p$ function $f\colon U\to R$ such that $f=g$ in $A\cup B$. 
\end{proof}

\begin{theorem}\label{4pu}
There exist abstract-definable $\mathcal{C}^p$ functions $\varphi_i\colon M\to R$ such that $\varphi_i\geq 0$, $\text{supp}(\varphi_i)\subseteq U_i$, and $\sum_{i\in \Lambda}\varphi_i=1$, for each $i\in \Lambda$. The collection $\{\varphi_i\}_{i\in \Lambda}$ is called an \emph{abstract-definable $\mathcal{C}^p$ partition of unity subordinate to $\{U_i\}_{i\in \Lambda}$}.
\end{theorem}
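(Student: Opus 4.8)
The plan is to follow the classical recipe adapted to the \emph{finite} atlas $\mathcal{A}=\{(U_i,\phi_i)\}_{i\in\Lambda}$: first produce, for each $i$, an abstract-definable $\mathcal{C}^p$ ``bump'' function $\psi_i\colon M\to R$ with $\psi_i\geq 0$ and $\text{supp}(\psi_i)\subseteq U_i$, arranged so that the sets $\{\psi_i=1\}$ already cover $M$; then normalize by setting $\varphi_i\mathrel{\mathop:}=\psi_i/\sum_{j\in\Lambda}\psi_j$. Because $\Lambda$ is finite, no local-finiteness issue arises and every sum is genuinely finite, so the only real tasks are to manufacture the $\psi_i$ inside a single chart and to glue them into abstract-definable $\mathcal{C}^p$ functions on all of $M$.

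First I would establish an abstract-definable \emph{shrinking lemma}: given the finite abstract-definable open cover $\{U_i\}_{i\in\Lambda}$ of $M$, there is a cover $\{V_i\}_{i\in\Lambda}$ by abstract-definable open sets with $\text{cl}(V_i)\subseteq U_i$. This is proved by induction on $i$, at each step applying definable normality (Theorem \ref{3}), in the equivalent form displayed after its definition, to the disjoint abstract-definable closed sets $F_i\mathrel{\mathop:}=M\setminus\bigcup_{j\neq i}(\text{current }j\text{th set})$, which lies in $U_i$ since the sets still cover, and $M\setminus U_i$; this yields $V_i$ with $F_i\subseteq V_i\subseteq\text{cl}(V_i)\subseteq U_i$, preserving the covering property. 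Applying this twice produces abstract-definable open covers with the nesting $\text{cl}(W_i)\subseteq V_i\subseteq\text{cl}(V_i)\subseteq U_i$, where $\{W_i\}_{i\in\Lambda}$ still covers $M$.

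Next, within the chart $(U_i,\phi_i)$ I would form the definable sets $A\mathrel{\mathop:}=\phi_i(\text{cl}(W_i))$ and $B\mathrel{\mathop:}=\phi_i(U_i\setminus V_i)$ inside $\phi_i(U_i)$. Since $\text{cl}(W_i)\subseteq U_i$ is closed in $M$ and $U_i\setminus V_i$ is closed in $U_i$, both $A$ and $B$ are closed in $\phi_i(U_i)$, and they are disjoint because $\text{cl}(W_i)\subseteq V_i$. Lemma \ref{3pu} then supplies a definable $\mathcal{C}^p$ function $g_i\colon\phi_i(U_i)\to R$ with $A\subseteq\{g_i=1\}$ and $B\subseteq\{g_i=0\}$; replacing $g_i$ by $g_i^2$ I may assume $g_i\geq 0$ while keeping $g_i=1$ on $A$ and $\{g_i\neq 0\}\subseteq\phi_i(V_i)$. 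Now set $\psi_i\mathrel{\mathop:}=g_i\circ\phi_i$ on $U_i$ and $\psi_i\mathrel{\mathop:}=0$ on $M\setminus U_i$. The point that makes this extension legitimate is that $\{\psi_i\neq 0\}\subseteq V_i$, so $\text{supp}(\psi_i)\subseteq\text{cl}(V_i)\subseteq U_i$ is \emph{closed in $M$}; hence $M$ is covered by the two open sets $U_i$ and $M\setminus\text{supp}(\psi_i)$, on each of which $\psi_i$ is visibly abstract-definable $\mathcal{C}^p$ (on $U_i$ via the transition maps, off $\text{supp}(\psi_i)$ identically zero) and the two descriptions agree on the overlap, so $\psi_i$ is abstract-definable $\mathcal{C}^p$ on $M$.

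Finally, since $\{W_i\}$ covers $M$, every $x\in M$ lies in some $\text{cl}(W_i)$, whence $\psi_i(x)=1$; thus $\Psi\mathrel{\mathop:}=\sum_{i\in\Lambda}\psi_i$ is an abstract-definable $\mathcal{C}^p$ function with $\Psi\geq 1$. The reciprocal $1/\Psi$ is again abstract-definable $\mathcal{C}^p$, the denominator being bounded away from $0$, so the functions $\varphi_i\mathrel{\mathop:}=\psi_i/\Psi$ are abstract-definable $\mathcal{C}^p$ and satisfy $\varphi_i\geq 0$, $\text{supp}(\varphi_i)=\text{supp}(\psi_i)\subseteq U_i$, and $\sum_{i\in\Lambda}\varphi_i=1$, as required. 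I expect the main obstacle to be not the analytic content, which is packaged in Lemma \ref{3pu} and ultimately rests on Thamrongthanyalak's smoothing theorem, but rather the bookkeeping that every set produced (the $F_i$, $V_i$, $W_i$ and the supports) is abstract-definable and that the extension-by-zero genuinely glues to an abstract-definable $\mathcal{C}^p$ function; this is precisely where the finiteness of $\mathcal{A}$, definable normality (Theorem \ref{3}), and the fact that closed supports sit inside a single chart (cf. Lemma \ref{1} and Theorem \ref{2}) are used.
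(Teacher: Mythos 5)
Your proof is correct, and it rests on the same analytic core as the paper's: Lemma \ref{3pu} applied inside a single chart, definable normality (Theorem \ref{3}) to manufacture the separating sets, extension by zero legitimized by the support being closed in $M$ and contained in $U_i$, and normalization by the finite sum. Where you genuinely differ is in how the covering property of the bumps is arranged. The paper (Lemma \ref{4,1pu}) reduces ``without loss of generality'' to a two-chart atlas and builds the bumps \emph{sequentially}: $\psi_1$ is made equal to $1$ on the closed set $U_1\setminus U_2$ (the points only chart $1$ can serve), and then $\psi_2$ is made equal to $1$ on $U_2\setminus \psi_1^{-1}(]0,+\infty[)$, i.e., on whatever $\psi_1$ failed to cover, so that positivity of $\psi_1+\psi_2$ follows from this bookkeeping. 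You instead prove a definable shrinking lemma (two nested shrinkings $\text{cl}(W_i)\subseteq V_i\subseteq \text{cl}(V_i)\subseteq U_i$, again via Theorem \ref{3}) and then treat all indices \emph{symmetrically}, making $\psi_i=1$ on $\text{cl}(W_i)$ and supported in $\text{cl}(V_i)$. Your route handles an arbitrary finite $\Lambda$ uniformly --- the paper's reduction to $\Lambda=\{1,2\}$ silently hides the induction that your shrinking lemma makes explicit --- and yields the slightly stronger bound $\sum_i\psi_i\geq 1$ rather than mere positivity; the paper's route avoids proving a shrinking lemma at the cost of an asymmetric, order-dependent construction. One caution for the gluing step: to conclude that the zero-extension $\psi_i$ is abstract-definable $\mathcal{C}^p$, verify it chart by chart --- on each $\phi_l(U_l)$ the function $\psi_i\circ\phi_l^{-1}$ is definable because it is given piecewise, on the two definable open pieces $\phi_l(U_l\cap U_i)$ and $\phi_l(U_l\setminus \text{supp}(\psi_i))$, by the definable $\mathcal{C}^p$ functions $g_i\circ(\phi_i\circ\phi_l^{-1})$ and $0$, and $\mathcal{C}^p$-ness is local on open subsets of $R^m$. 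Do not appeal to the general local criterion of Lemma \ref{2f*} here, since its proof in the paper uses Theorem \ref{4pu} and would make the argument circular; your wording (``via the transition maps'') suggests the direct verification, which is the right one.
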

\begin{proof}
For the sake of simplicity, let us assume without loss of generality that $\Lambda=\{1,2\}$. By virtue of Lemma \ref{4,1pu} below, define each $\varphi_i\colon M\to R$ as 
$\varphi_i \mathrel{\mathop:}=\psi_i/(\psi_1+\psi_2)$. It is readily seen that these functions have the above required properties.
\end{proof}

\begin{lemma}\label{4,1pu}
There exist abstract-definable $\mathcal{C}^p$ nonnegative functions $\psi_1,\psi_2\colon M\to R$ satisfying $\text{supp}(\psi_i)\subseteq U_i$ and $M=\psi_1^{-1}(]0,+\infty[)\cup \psi_2^{-1}(]0,+\infty[)$.
\end{lemma}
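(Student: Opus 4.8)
The plan is to mimic the classical construction of a partition of unity subordinate to a two-element open cover, replacing the smooth bump functions of the classical theory by the weak definable $\mathcal{C}^p$ Urysohn functions supplied by Lemma \ref{3pu}, and compensating for the absence of a smooth exhaustion by exploiting the finiteness of the atlas. In the reduced setting $\Lambda=\{1,2\}$ the charts $(U_1,\phi_1),(U_2,\phi_2)$ cover $M$, so $M=U_1\cup U_2$, and it suffices to arrange $\psi_i>0$ on a set $V_i$ with $M=V_1\cup V_2$ while keeping $\text{supp}(\psi_i)\subseteq U_i$.

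First I would shrink the cover three times using definable normality (Theorem \ref{3}). The sets $M\setminus U_2$ and $M\setminus U_1$ are disjoint abstract-definable closed subsets of $M$, since their intersection is $M\setminus(U_1\cup U_2)=\emptyset$; the equivalent formulation of definable normality then yields an abstract-definable open $V_1$ with $M\setminus U_2\subseteq V_1\subseteq \text{cl}(V_1)\subseteq U_1$, whence $M=V_1\cup U_2$. Applying normality again to the disjoint closed sets $M\setminus V_1$ and $M\setminus U_2$ (disjoint because $M=V_1\cup U_2$) produces an abstract-definable open $V_2$ with $M\setminus V_1\subseteq V_2\subseteq \text{cl}(V_2)\subseteq U_2$, so that $M=V_1\cup V_2$. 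A third application, to the disjoint closed sets $\text{cl}(V_i)$ and $M\setminus U_i$, gives abstract-definable open sets $W_i$ with $\text{cl}(V_i)\subseteq W_i\subseteq \text{cl}(W_i)\subseteq U_i$ for $i=1,2$.

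Next I would pass into the charts. The sets $A_i\mathrel{\mathop:}=\phi_i(\text{cl}(V_i))$ and $B_i\mathrel{\mathop:}=\phi_i(U_i\setminus W_i)$ are disjoint definable subsets of the definable open set $\phi_i(U_i)\subseteq R^m$, and both are closed therein, because $\phi_i$ is a homeomorphism of $U_i$ onto $\phi_i(U_i)$ while $\text{cl}(V_i)$ and $U_i\setminus W_i$ are relatively closed in $U_i$ with $\text{cl}(V_i)\cap(U_i\setminus W_i)=\emptyset$. Lemma \ref{3pu} then furnishes a definable $\mathcal{C}^p$ function $g_i\colon \phi_i(U_i)\to R$ with $A_i\subseteq\{g_i=1\}$ and $B_i\subseteq\{g_i=0\}$. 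I would define $\psi_i\colon M\to R$ by setting $\psi_i\mathrel{\mathop:}=(g_i\circ\phi_i)^2$ on $U_i$ and $\psi_i\mathrel{\mathop:}=0$ on $M\setminus\text{cl}(W_i)$. These two prescriptions agree on the overlap $U_i\setminus\text{cl}(W_i)\subseteq U_i\setminus W_i$, where $g_i$ vanishes, so $\psi_i$ is a well-defined nonnegative function; being definable and of class $\mathcal{C}^p$ on each member of the open cover $\{U_i,\,M\setminus\text{cl}(W_i)\}$ of $M$, it is abstract-definable $\mathcal{C}^p$. Since $g_i$ vanishes on $B_i$, the set $\{\psi_i\neq 0\}$ lies in $W_i$, giving $\text{supp}(\psi_i)\subseteq\text{cl}(W_i)\subseteq U_i$; moreover $\psi_i\equiv 1$ on $\text{cl}(V_i)\supseteq V_i$. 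Hence $M=V_1\cup V_2\subseteq \psi_1^{-1}(]0,+\infty[)\cup\psi_2^{-1}(]0,+\infty[)$, as required.

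The step I expect to be the main obstacle is the extension by zero of the chart function across the topological boundary of $U_i$ while preserving differentiability: classically one invokes a compactly supported bump function interior to the chart, a device unavailable in this tame setting. The resolution is exactly the use of the nested shrinkings together with the feature of Lemma \ref{3pu} that the Urysohn function is identically zero on the whole buffer region $U_i\setminus W_i$; this buffer is what permits $\psi_i$ to be assembled from its chart expression and the zero function over an open cover of $M$, so that it is genuinely abstract-definable $\mathcal{C}^p$ rather than merely continuous. A secondary but necessary point is the squaring of $g_i$, forced by the fact that Lemma \ref{3pu} guarantees only the prescribed values on $A_i$ and $B_i$ and not $0\leq g_i\leq 1$, so $g_i$ itself need not be nonnegative.
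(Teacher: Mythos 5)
Your proof is correct and follows essentially the same route as the paper's: definable normality (Theorem \ref{3}) produces buffer regions inside each chart, Lemma \ref{3pu} supplies the chart-level Urysohn function (squared to force nonnegativity), and extension by zero with support contained in $U_i$ yields the abstract-definable $\mathcal{C}^p$ property. The only difference is organizational: you shrink the cover symmetrically to $V_1,V_2$ with $M=V_1\cup V_2$ before building either function, whereas the paper proceeds sequentially, taking $V_1=U_1\setminus U_2$, constructing $\psi_1$, and only then setting $V_2=U_2\setminus \psi_1^{-1}(]0,+\infty[)$.
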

\begin{proof}
Consider $V_1$ the abstract-definable closed subset $U_1-U_2$ of $M$ which does not intersect the abstract-definable closed subset $\text{bd}(U_1)$. Denote by $\Omega_1, \Omega_2$ the disjoint abstract-definable open sets in $M$ such that $V_1\subseteq \Omega_1$ and $\text{bd}(U_1)\subseteq \Omega_2$. (Recall that $M$ is definably normal). Let $W_1$ be the intersection $\text{cl}(\Omega_2)\cap U_1$. Since $\Omega_1\cap \text{cl}(\Omega_2)=\emptyset$, the (abstract-definable) closed subsets $V_1$ and $W_1$ of $U_1$ are disjoint. Consequently, $\phi_1(V_1)$ and $\phi(W_1)$ are disjoint definable closed subsets of $\phi_1(U_1)$. By Lemma \ref{3pu}, there is a definable $\mathcal{C}^p$ function $f_1\colon \phi_1(U_1)\to R$ such that $V_1\subseteq \{f_1=1\}$ and $W_1\subseteq \{f_1=0\}$. Squaring if necessary, we may assume that $f_1\geq 0$. Take $\psi_1\colon M\to R$ to be the nonnegative function given by 
$$
\psi_1\mathrel{\mathop:}=
\begin{cases}
f_1\circ \phi_1& \text{on } U_1\\
0& \text{on } M\setminus U_1
\end{cases}
$$
In order to obtain $\text{supp}(\psi_1)\subseteq U_1$ it suffices to prove that the set $\text{cl}(\phi^{-1}_1(\{f_1\neq 0\}))$ does not intersect $\text{bd}(U_1)$. But this follows immediately from the inclusions 
\begin{align*}
\phi_1^{-1}(\{f_1\neq 0\})&=U_1\setminus\phi_1^{-1}(\{f_1=0\})\subseteq U_1\setminus W_1\\
&=U_1\setminus \text{cl}(\Omega_2)\subseteq U_1\setminus \Omega_2\subseteq M\setminus \Omega_2
\end{align*}
and $\text{bd}(U_1)\subseteq \Omega_2$. From the inclusion $\text{supp}(\psi_1)\subseteq U_1$ we can easily conclude that $\psi_1$ is an abstract-definable $C^p$ function. Proceeding in a similar way for $V_2= U_2\setminus \psi_1^{-1}(]0,+\infty[)$ and $W_2= \text{cl}(\Theta_2)$, where $\Theta_2$ is an abstract-definable open subset of $M$ containing $\text{bd}(U_2)$ whose existence is ensured by the definable normality of $M$, we may construct an abstract-definable $\mathcal{C}^p$ nonnegative function $\psi_2\colon M\to R$ satisfying $\text{supp}(\psi_2)\subseteq U_2$. 
Finally, note that the sets $\{\psi_1>0\}$ and $\{\psi_2>0\}$ cover $M$, by the construction of the functions $\psi_i$.  
\end{proof}

\begin{corollary}\label{5pu}
Let $\{V, W\}$ be an abstract-definable open cover of $M$. There are abstract-definable $\mathcal{C}^p$ nonnegative functions $f_V,f_W\colon M\to R$ such that $\text{supp}(f_V)\subseteq V$, $\text{supp}(f_W)\subseteq W$, and $f_V+f_W=1$.
\end{corollary}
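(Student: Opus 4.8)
The plan is to reduce the statement to the partition of unity subordinate to an atlas already furnished by Theorem \ref{4pu}, by \emph{refining} the given atlas $\mathcal{A}=\{(U_i,\phi_i)\}_{i\in\Lambda}$ so that it becomes subordinate to the cover $\{V,W\}$. Concretely, I would form the collection
\[
\mathcal{A}' \mathrel{\mathop:}= \{(U_i\cap V,\ \phi_i|_{U_i\cap V})\}_{i\in\Lambda}\cup \{(U_i\cap W,\ \phi_i|_{U_i\cap W})\}_{i\in\Lambda},
\]
discarding any charts whose domain is empty. First I would check that $\mathcal{A}'$ is a finite abstract-definable $\mathcal{C}^p$ atlas on $M$ compatible with $\mathcal{A}$: since $V$ and $W$ are abstract-definable open, each $\phi_i(U_i\cap V)$ and $\phi_i(U_i\cap W)$ is a definable open subset of $R^m$; the transition maps of $\mathcal{A}'$ are restrictions of the diffeomorphisms $\phi_j\circ\phi_i^{-1}$ to definable open domains, hence again definable $\mathcal{C}^p$-diffeomorphisms; and the cover property holds because $\bigcup_i(U_i\cap V)\cup\bigcup_i(U_i\cap W)=\big(\bigcup_iU_i\big)\cap(V\cup W)=M$. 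As $\mathcal{A}'\sim\mathcal{A}$, the manifold topology is unchanged (hence still Hausdorff), so $(M,\mathcal{A}')$ is again an abstract-definable $\mathcal{C}^p$ manifold, and by the stability of abstract-definability under $\sim$-equivalent atlases a function is abstract-definable $\mathcal{C}^p$ with respect to $\mathcal{A}'$ if and only if it is so with respect to $\mathcal{A}$.

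Next I would apply Theorem \ref{4pu} to $(M,\mathcal{A}')$ to obtain a partition of unity indexed by the charts of $\mathcal{A}'$; write the resulting functions as $\{\varphi_i^V\}_i$ and $\{\varphi_i^W\}_i$ according to whether the underlying chart has domain $U_i\cap V$ or $U_i\cap W$, so that $\varphi_i^V,\varphi_i^W\geq 0$ with $\text{supp}(\varphi_i^V)\subseteq U_i\cap V$ and $\text{supp}(\varphi_i^W)\subseteq U_i\cap W$, and $\sum_i\varphi_i^V+\sum_i\varphi_i^W=1$. Then I would set
\[
f_V\mathrel{\mathop:}=\sum_{i}\varphi_i^V,\qquad f_W\mathrel{\mathop:}=\sum_{i}\varphi_i^W.
\]
These are finite sums of nonnegative abstract-definable $\mathcal{C}^p$ functions, hence themselves nonnegative and abstract-definable $\mathcal{C}^p$, and $f_V+f_W=1$ by construction.

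The only point requiring care — and the step I expect to be the main (minor) obstacle — is the support computation. Since each $\varphi_i^V\geq 0$, one has $\{f_V>0\}=\bigcup_i\{\varphi_i^V>0\}$, and because the index set is \emph{finite}, the closure commutes with the union:
\[
\text{supp}(f_V)=\text{cl}\Big(\bigcup_i\{\varphi_i^V>0\}\Big)=\bigcup_i\text{supp}(\varphi_i^V)\subseteq\bigcup_i(U_i\cap V)\subseteq V,
\]
and symmetrically $\text{supp}(f_W)\subseteq W$. It is precisely the finiteness of the atlas (and hence of the refined cover) that legitimizes passing the closure through the union; this is the same finiteness phenomenon exploited throughout Section 3, and it is what stands in for the local finiteness of classical partitions of unity.
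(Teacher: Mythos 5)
Your proposal is correct and follows essentially the same route as the paper's own proof: both refine the atlas $\mathcal{A}$ by intersecting its chart domains with $V$ and $W$, apply Theorem \ref{4pu} to the resulting $\sim$-equivalent atlas, and take $f_V$, $f_W$ to be the corresponding partial sums of the partition of unity. Your explicit verification of the support inclusion via the finiteness of the index set (closure commuting with finite unions) is a detail the paper leaves implicit, but it is exactly the intended justification.
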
 
\begin{proof}
Consider $\phi_i^V$ and $\phi_i^W$ the restrictions ${\phi_i|}_{V_i}$ and ${\phi_i|}_{W_i}$, respectively, where $V_i\mathrel{\mathop:}=U_i\cap V$ and $W_i\mathrel{\mathop:}=U_i\cap W$, for each $i\in \Lambda$. The collection $\{\phi_i^V\colon V_i\to \phi_i(V_i),\phi_i^W\colon W_i\to \phi_i(W_i)\}_{i\in \Lambda}$
is then an abstract-definable $\text{C}^p$ atlas on $M$, $\sim$-equivalent to $\mathcal{A}$. Applying Theorem \ref{4pu} to this atlas, we obtain abstract-definable $\mathcal{C}^p$ nonnegative functions $\varphi_i^V\colon M\to R$, $\varphi_i^W\colon M\to R$ ($i\in \Lambda$) satisfying the conditions $\text{supp}(\varphi_i^V)\subseteq V_i$, $\text{supp}(\varphi_i^W)\subseteq W_i$, and $\sum_{i\in \Lambda}\varphi_i^V+\sum_{i\in \Lambda}\varphi^W_i=1$. For the conclusion, it suffices to define $f_V\colon M\to R$ and $f_W\colon M\to R$ to be  
$f_V\mathrel{\mathop:}= \sum_{i\in \Lambda}\varphi_i^V$, and $f_W\mathrel{\mathop:}= \sum_{i\in \Lambda}\varphi_i^W$ respectively.
\end{proof}

The following is the abstract-definable $\mathcal{C}^p$ version of the Urysohn's lemma.

\begin{corollary}\label{6pu}
Let $A$ and $B$ be disjoint abstract-definable closed sets in $M$. Then, there exists an abstract-definable $\mathcal{C}^p$ nonnegative function $f\colon M\to R$ which is identically $1$ on $A$, and identically $0$ on $B$.
\end{corollary}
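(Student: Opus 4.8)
The plan is to deduce this abstract-definable Urysohn lemma (Corollary \ref{6pu}) from the partition-of-unity machinery already established, specifically from Corollary \ref{5pu}. The idea is to manufacture an appropriate abstract-definable open cover of $M$ from the two disjoint closed sets $A$ and $B$, apply Corollary \ref{5pu} to obtain a pair of nonnegative abstract-definable $\mathcal{C}^p$ functions summing to $1$ with supports controlled by the cover, and then read off from one of them a function that is forced to be $1$ on $A$ and $0$ on $B$.

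Concretely, first I would use the definable normality of $M$ (Theorem \ref{3}): since $A$ and $B$ are disjoint abstract-definable closed sets, there exist disjoint abstract-definable open sets $U_A \supseteq A$ and $U_B \supseteq B$. Set $V \mathrel{\mathop:}= M \setminus B$ and $W \mathrel{\mathop:}= M \setminus A$; these are abstract-definable open sets (complements of abstract-definable closed sets, using property (ii) of abstract-definable sets), and they cover $M$ because $A \cap B = \emptyset$. Now apply Corollary \ref{5pu} to the cover $\{V, W\}$ to obtain nonnegative abstract-definable $\mathcal{C}^p$ functions $f_V, f_W \colon M \to R$ with $\operatorname{supp}(f_V) \subseteq V$, $\operatorname{supp}(f_W) \subseteq W$, and $f_V + f_W = 1$.

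The function I want is $f \mathrel{\mathop:}= f_V$. On $B$: since $\operatorname{supp}(f_W) \subseteq W = M \setminus A$, we have $f_W = 0$ on $A$, hence $f_V = 1 - f_W = 1$ on $A$; dually, $\operatorname{supp}(f_V) \subseteq V = M \setminus B$ forces $f_V = 0$ on $B$. Thus $f = f_V$ is nonnegative, abstract-definable $\mathcal{C}^p$, identically $1$ on $A$, and identically $0$ on $B$, which is exactly the assertion.

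I do not expect any genuine obstacle here, since the corollary is essentially a repackaging of Corollary \ref{5pu}: the only points requiring a word of care are that $V$ and $W$ are indeed abstract-definable open (guaranteed by the boolean-algebra and complement properties of abstract-definable sets recorded after the definition of abstract-definable maps) and that the support conditions translate correctly into the pointwise vanishing statements on $A$ and $B$. The latter is the small but essential step: one must observe that $f_V = 1$ on $A$ follows from $f_W$ vanishing on $A$ via the identity $f_V + f_W = 1$, rather than directly from the support of $f_V$. If one preferred to avoid invoking $\operatorname{supp}$ altogether, an equivalent route is to feed Corollary \ref{5pu} the cover by the disjoint-normality neighborhoods, but the complement description above is the cleanest.
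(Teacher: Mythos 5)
Your proof is correct and is essentially identical to the paper's own: both apply Corollary \ref{5pu} to the open cover $\{M\setminus A,\, M\setminus B\}$ of $M$ and take the resulting function supported in $M\setminus B$, using $f_V+f_W=1$ together with the support conditions to force the value $1$ on $A$ and $0$ on $B$. (Two cosmetic remarks: your opening appeal to definable normality to produce $U_A$ and $U_B$ is never used and can be deleted, and the label ``On $B$:'' in your third paragraph actually introduces the argument for the value on $A$.)
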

\begin{proof}
Apply Corollary \ref{5pu} to the abstract-definable open cover $\{V,W\}$ of $M$, where $V$ denotes $M\setminus A$ and $W$ denotes $M\setminus B$, to obtain abstract-definable $\mathcal{C}^p$ nonnegative functions $f_V,f_W\colon M\to R$ such that $\text{supp}(f_V)\subseteq V$, $\text{supp}(f_W)\subseteq W$, and $f_V+f_W=1$. Then by letting $f\colon M\to R$ be $f_W$, the result thus follows. 
\end{proof}

\begin{corollary}\label{7pu}
Let $F$ be an abstract-definable closed set in $M$, and $U$ an abstract-definable open set in $M$ containing $F$. Then, there exists an abstract-definable $\mathcal{C}^p$ function $\rho\colon M\to R$ so that $0\leq \rho\leq 1$, $\rho|_F=1$, and $\text{supp}(\rho)\subseteq U$.
\end{corollary}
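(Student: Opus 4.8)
The plan is to reduce everything to the abstract-definable Urysohn lemma (Corollary~\ref{6pu}), but applied with an \emph{interposed} open set, so that the resulting function vanishes not merely on $M\setminus U$ but on an entire neighbourhood of it; this is exactly what the support condition demands.

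First I would record that $F$ and $M\setminus U$ are disjoint abstract-definable closed subsets of $M$: the set $M\setminus U$ is closed because $U$ is open, both are abstract-definable since the abstract-definable subsets of $M$ form a boolean algebra (together with the fact that complements and closures of abstract-definable sets are abstract-definable), and they are disjoint precisely because $F\subseteq U$. A naive application of Corollary~\ref{6pu} to $A=F$ and $B=M\setminus U$ would yield a function equal to $1$ on $F$ and $0$ on $M\setminus U$, giving only $\{f\neq 0\}\subseteq U$ and hence $\text{supp}(f)=\text{cl}(\{f\neq 0\})\subseteq \text{cl}(U)$, which may spill over $\text{bd}(U)$. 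Thus the direct route fails to control the support, and overcoming this is the only real obstacle.

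To remedy it I would invoke the definable normality of $M$ (Theorem~\ref{3}) in the equivalent formulation recorded just after the definition of definable normality: applied to the disjoint abstract-definable closed sets $F$ and $M\setminus U$, it produces an abstract-definable open set $W\subseteq M$ with
$$
F\subseteq W\subseteq \text{cl}(W)\subseteq U.
$$
Now $F$ and $M\setminus W$ are again disjoint abstract-definable closed sets, so Corollary~\ref{6pu} furnishes an abstract-definable $\mathcal{C}^p$ nonnegative function $\rho\colon M\to R$ that is identically $1$ on $F$ and identically $0$ on $M\setminus W$.

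It then remains to check the three asserted properties. Since $\rho$ vanishes on $M\setminus W$, we have $\{\rho\neq 0\}\subseteq W$, whence $\text{supp}(\rho)=\text{cl}(\{\rho\neq 0\})\subseteq \text{cl}(W)\subseteq U$, as required; the identity $\rho|_F=1$ is built into the choice of $\rho$. Finally, the bound $0\leq \rho\leq 1$ is automatic, because the function delivered by Corollary~\ref{6pu} is one of the two nonnegative summands $f_V,f_W$ produced by Corollary~\ref{5pu} with $f_V+f_W=1$, and is therefore trapped between $0$ and $1$. The crux of the whole argument is the interposition of $W$, which upgrades ``zero outside $U$'' to ``support inside $U$'' and is precisely where definable normality, rather than mere definable regularity, is needed.
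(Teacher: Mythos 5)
Your proof is correct, but it follows a genuinely different route from the paper's. The paper proves the corollary in one line by applying Corollary \ref{5pu} directly to the abstract-definable open cover $\{M\setminus F,\,U\}$ of $M$: this yields nonnegative abstract-definable $\mathcal{C}^p$ functions $f,g$ with $f+g=1$, $\text{supp}(f)\subseteq M\setminus F$ and $\text{supp}(g)\subseteq U$, and setting $\rho\mathrel{\mathop:}=g$ gives all three properties at once ($\rho|_F=1$ because $f$ vanishes on $F$, and $0\leq\rho\leq 1$ because $f,g\geq 0$ and $f+g=1$); the crucial point is that the support condition is already built into the statement of Corollary \ref{5pu}. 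You instead start from Corollary \ref{6pu}, whose statement gives no control on supports, and you recover that control by the classical interposition trick: definable normality (Theorem \ref{3}) produces an abstract-definable open $W$ with $F\subseteq W\subseteq\text{cl}(W)\subseteq U$, and Urysohn applied to $F$ and $M\setminus W$ then forces $\text{supp}(\rho)\subseteq\text{cl}(W)\subseteq U$. Both arguments are valid; the paper's is shorter because the normality/shrinking work is already absorbed into the partition-of-unity machinery (it happens inside Lemma \ref{4,1pu}), while yours makes that step explicit and derives the corollary from the bare Urysohn statement plus normality, a more modular, textbook point-set derivation. The one blemish is your justification of $0\leq\rho\leq 1$: that bound is not part of the statement of Corollary \ref{6pu}, so you argue it by inspecting that corollary's proof (the function there is one summand of a partition of unity). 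The claim is true and the step is legitimate, but if you wanted to keep Corollary \ref{6pu} as a black box you could instead post-compose $\rho$ with a semialgebraic $\mathcal{C}^p$ function $h\colon R\to R$ satisfying $h\equiv 0$ on $]-\infty,0]$, $h\equiv 1$ on $[1,+\infty[$ and $0\leq h\leq 1$, which preserves the other required properties.
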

\begin{proof}
Let $\{M\setminus F,U\}$ be an abstract-definable open cover of $M$. By Corollary \ref{5pu}, there are abstract-definable $\mathcal{C}^p$ nonnegative functions $f,g\colon M\to R$ which have the properties $\text{supp}(f)\subseteq M\setminus F$, $\text{ supp}(g)\subseteq U$, and $f+g=1$. By defining $\rho$ to be $g$, we are done. 
\end{proof}

\begin{corollary}\label{8pu}
For any abstract-definable open subset $U\subseteq M$, there is an abstract-definable $\mathcal{C}^p$ nonnegative function $\rho\colon M\to R$ such that ${\rho|}_V=1$, for some abstract-definable open set $V\subseteq U$, and $\text{supp}(\rho)\subseteq U$.
The function $\rho$ is called an \emph{abstract-definable $\mathcal{C}^p$ bump function supported in $U$}.
\end{corollary}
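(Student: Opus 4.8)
The plan is to reduce this statement to the preceding Corollary \ref{7pu}, whose only input we lack is an abstract-definable \emph{closed} set inside $U$ with nonempty interior. The whole content of the proof is therefore to manufacture such a closed set, and definable regularity supplies it directly.

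First I would dispose of the degenerate case $U=\emptyset$: here one may take $\rho\equiv 0$ and $V=\emptyset$, since $\text{supp}(\rho)=\emptyset\subseteq U$ and the condition ${\rho|}_V=1$ holds vacuously over the empty set. Assume then that $U$ is nonempty and fix a point $x\in U$. Because $M$ is definably regular (Theorem \ref{2}), the equivalent reformulation recorded immediately after the definition of definable regularity yields an abstract-definable open subset $V\subseteq M$ with $x\in V\subseteq \text{cl}(V)\subseteq U$. Set $F\mathrel{\mathop:}=\text{cl}(V)$; by property (ii) of Section 2 this is an abstract-definable closed subset of $M$, and by construction $F\subseteq U$ with $U$ abstract-definable open containing $F$.

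Now I would simply apply Corollary \ref{7pu} to the pair $F\subseteq U$, obtaining an abstract-definable $\mathcal{C}^p$ function $\rho\colon M\to R$ with $0\leq \rho\leq 1$, ${\rho|}_F=1$, and $\text{supp}(\rho)\subseteq U$. In particular $\rho$ is nonnegative. Finally I would verify the two required clauses: since $V\subseteq \text{cl}(V)=F$ and $\rho$ is identically $1$ on $F$, we get ${\rho|}_V=1$, and $V$ is an abstract-definable open subset of $U$ as demanded; the support condition $\text{supp}(\rho)\subseteq U$ comes for free from Corollary \ref{7pu}.

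I do not expect any genuine obstacle here, as this is a corollary whose substance already resides in the partition-of-unity machinery. The one point requiring attention — and the reason a bump function is strictly stronger than the Urysohn-type statement of Corollary \ref{7pu} applied to a singleton — is that the set on which $\rho$ equals $1$ must be \emph{open}, not merely a point. Passing to $F=\text{cl}(V)$ for a regularly-shrunk neighborhood $V$ guarantees $F$ has nonempty interior containing the open set $V$, which is exactly what forces $\rho$ to be identically $1$ on a genuine abstract-definable open set rather than on an isolated point.
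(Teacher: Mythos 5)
Your proof is correct and is essentially the paper's own argument: both fix a point $x\in U$, invoke definable regularity (Theorem \ref{2}) to produce an abstract-definable open $V$ with $x\in V\subseteq \text{cl}(V)\subseteq U$, and then apply Corollary \ref{7pu} to $\text{cl}(V)$. Your only addition is the explicit treatment of the case $U=\emptyset$, which the paper leaves implicit.
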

\begin{proof}
Fix a point $x$ in $U$. Since $M$ is definably regular, there is an abstract-definable open set $V$ with $x\in V\subseteq \text{cl}(V)\subseteq U$.
By applying Corollary \ref{7pu} to $\text{cl}(V)$, we immediately obtain the desired function $\rho$. 
\end{proof}

It is quite satisfactory in verifying that a map is abstract-definable $\mathcal{C}^p$ to choose only convenient charts. This is what the following states.

\begin{lemma}\label{2f*}
A map $f\colon M\to N$ is abstract-definable $\mathcal{C}^p$ if and only if for each $x\in M$ there is a chart $(U,\phi)$ on $M$ at $x$ and a chart $(V,\psi)$ on $N$ at $f(x)$ such that the restriction of $\psi\circ f\circ \phi^{-1}$ to $\phi(U\cap f^{-1}(V))$ is a $\mathcal{C}^p$-map.
\end{lemma}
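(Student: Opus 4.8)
The plan is to prove the two implications separately, with essentially all the content lying in the ``if'' direction.

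For the ``only if'' direction I would argue directly from the definition. If $f$ is abstract-definable $\mathcal{C}^p$, then since $\mathcal{A}$ covers $M$ and $\mathcal{B}$ covers $N$ there is, for each $x\in M$, at least one chart $(U,\phi)\in\mathcal{A}$ with $x\in U$ and at least one chart $(V,\psi)\in\mathcal{B}$ with $f(x)\in V$; the definition of abstract-definable $\mathcal{C}^p$, which demands the $\mathcal{C}^p$-map property for \emph{all} such charts, gives in particular that $\psi\circ f\circ\phi^{-1}$ restricted to $\phi(U\cap f^{-1}(V))$ is a $\mathcal{C}^p$-map. Hence the existential condition holds with these charts.

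For the ``if'' direction, suppose that for each $x\in M$ we are given charts $(U_x,\phi_x)\in\mathcal{A}$ at $x$ and $(V_x,\psi_x)\in\mathcal{B}$ at $f(x)$ for which $\psi_x\circ f\circ\phi_x^{-1}$ is a $\mathcal{C}^p$-map on $\phi_x(U_x\cap f^{-1}(V_x))$. To verify the definition I must take \emph{arbitrary} charts $(U,\phi)\in\mathcal{A}$ and $(V,\psi)\in\mathcal{B}$ and a point $z\in U$ with $f(z)\in V$, and show that $g:=\psi\circ f\circ\phi^{-1}$ is a $\mathcal{C}^p$-map on $\phi(U\cap f^{-1}(V))$. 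Because being a $\mathcal{C}^p$-map is a local property, it is enough to exhibit, around each $q=\phi(z)$ of the domain, a piece on which $g$ is $\mathcal{C}^p$. Applying the hypothesis at $z$, on the overlap region $\phi\big(U\cap U_z\cap f^{-1}(V\cap V_z)\big)$ I can factor
$$
g=\big(\psi\circ\psi_z^{-1}\big)\circ\big(\psi_z\circ f\circ\phi_z^{-1}\big)\circ\big(\phi_z\circ\phi^{-1}\big).
$$
Here the outer two factors are restrictions of the transition maps $\psi\circ\psi_z^{-1}$ and $\phi_z\circ\phi^{-1}$, which are definable $\mathcal{C}^p$-diffeomorphisms precisely because $\mathcal{A}$ and $\mathcal{B}$ are abstract-definable $\mathcal{C}^p$ atlases, while the middle factor is a $\mathcal{C}^p$-map by hypothesis. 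Since a composite of $\mathcal{C}^p$-maps is again a $\mathcal{C}^p$-map (the chain rule for definable maps), $g$ is a $\mathcal{C}^p$-map on this overlap, which contains $q$.

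The main obstacle is globalization: to pass from ``$g$ is $\mathcal{C}^p$ on each overlap region'' to ``$g$ is a $\mathcal{C}^p$-map on all of $\phi(U\cap f^{-1}(V))$'' I need those overlaps to constitute an open cover of the domain, which forces me to know that $\phi(U\cap f^{-1}(V))$ is open, equivalently that $f$ is continuous. I would therefore aim to settle continuity of $f$ as a preliminary step: fixing $z$ and the charts $(U_z,\phi_z),(V_z,\psi_z)$, the $\mathcal{C}^p$-map $\psi_z\circ f\circ\phi_z^{-1}$ is in particular continuous and extends continuously to an open set, and composing with the homeomorphisms $\phi_z$ and $\psi_z^{-1}$ should realize $f$ near $z$ as a continuous map into $V_z\subseteq N$, whence each $f^{-1}(V)$ is open and the gluing goes through. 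I expect the care required to make this continuity argument precise, namely ensuring that the sets $U_z\cap f^{-1}(V_z)$ are genuine neighborhoods of $z$ rather than merely containing $z$, to be the delicate point of the proof.
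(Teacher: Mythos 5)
Your ``only if'' direction is fine, and your factorization of $g=\psi\circ f\circ\phi^{-1}$ through the transition maps on overlaps is exactly the paper's. But the ``if'' direction of your proposal rests on a preliminary step that cannot be carried out: continuity of $f$ does not follow from the hypothesis, and in this paper's framework an abstract-definable $\mathcal{C}^p$ map need not even be continuous. Concretely, take $M=R$ with the single chart $\text{id}_R$, let $N=]-2,2[\,\cup\,]1,5[$ with the two identity charts $V_1=]-2,2[$ and $V_2=]1,5[$ (the transition map is the identity on $]1,2[$), and let $f(x)=0$ for $x\leq 0$ and $f(x)=4$ for $x>0$. Then $f^{-1}(V_1)=]-\infty,0]$ and $f^{-1}(V_2)=]0,+\infty[$, and $f$ is constant on each of these sets, so the hypothesis of Lemma \ref{2f*} holds; in fact $f$ is abstract-definable $\mathcal{C}^p$ outright, since $V_1$ is the only chart of $N$ containing $0$ and $V_2$ the only one containing $4$. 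Yet $f$ is discontinuous at $0$. So the point you yourself flagged as delicate --- that $U_z\cap f^{-1}(V_z)$ be a genuine neighborhood of $z$ --- is not merely delicate: it is false in general, and so is openness of $\phi(U\cap f^{-1}(V))$. The paper's notion of ``$\mathcal{C}^p$-map'' is tailored precisely to non-open domains (a \emph{definable} map on a possibly non-open set admitting a definable $\mathcal{C}^p$ extension to a definable open set), so the conclusion of the lemma neither implies nor requires continuity, and your reduction to the open-domain case collapses.

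What replaces continuity in the paper's proof is the finiteness of the atlases together with a partition of unity, neither of which appears in your proposal; relatedly, being a $\mathcal{C}^p$-map in this sense is \emph{not} a local property, contrary to your opening assumption. Since the charts $(U_z,\phi_z)$, $(V_z,\psi_z)$ chosen at the points $z$ all come from the finite set $\mathcal{A}\cup\mathcal{B}$, the set $U\cap f^{-1}(V)$ is a \emph{finite} union of pieces $U\cap f^{-1}(V)\cap U_\alpha\cap f^{-1}(V_\alpha)$; on each piece your factorization shows the restriction $g_\alpha$ of $g$ is a $\mathcal{C}^p$-map, and finiteness of the union is what yields definability of $g$ on all of $\phi(U\cap f^{-1}(V))$ --- a requirement you never address, and one that local information alone cannot give, since an infinite union of definable pieces need not be definable. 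Finally, each $g_\alpha$ has a definable $\mathcal{C}^p$ extension $\widetilde{g}_\alpha$ to a definable open set $W_\alpha$, and these finitely many extensions, which need not agree on overlaps, are blended into a single definable $\mathcal{C}^p$ extension $\widetilde{g}=\sum_\alpha \varphi_\alpha\widetilde{g}_\alpha$ on $W=\bigcup_\alpha W_\alpha$ by means of the abstract-definable $\mathcal{C}^p$ partition of unity of Theorem \ref{4pu} applied to the manifold $(W,\{\text{id}_{W_\alpha}\}_\alpha)$. This gluing of extensions is the actual content of the ``if'' direction, and it is entirely missing from your proposal.
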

\begin{proof}
The ``only if'' direction is immediate. For the ``if'' direction, fix a point $x$ in $M$, and let $(U,\phi)$, $(V,\psi)$ be arbitrary charts respectively on $M$ at $x$ and on $N$ at $f(x)$. We must prove that the restriction of $\psi\circ f\circ \phi^{-1}$ to $\phi(U\cap f^{-1}(V))$ is a $\mathcal{C}^p$-map. This will be done first by showing that the concerned restricted map is definable, and then it is extendable to a definable map of class $\mathcal{C}^p$ defined on an open definable set. For each $z\in U\cap f^{-1}(V)$, pick a chart $(U_z,\phi_z)\in \mathcal{A}$ with $z\in U_z$, and a chart $(V_z,\psi_z)\in \mathcal{B}$ with $f(z)\in V_z$ in such a way that $\psi_z\circ f\circ \phi_z^{-1}|_{\phi_z(U_z\cap f^{-1}(V_z))}$ is a $\mathcal{C}^p$-map. Since the set of these chosen charts is contained in $\mathcal{A}\cup \mathcal{B}$, $U\cap f^{-1}(V)$ can be expressed as a finite union
$$
U\cap f^{-1}(V)=\bigcup\limits_{\alpha\in \Lambda} (U\cap f^{-1}(V))\cap (U_\alpha\cap f^{-1}(V_\alpha)),
$$
where $\Lambda$ is an enumeration of this set of the chosen charts. Consequently, 
$$
\phi(U\cap f^{-1}(V))=\bigcup\limits_{\alpha\in \Lambda} \phi(U\cap f^{-1}(V)\cap U_\alpha\cap f^{-1}(V_\alpha)).
$$
Note that on each definable set $\phi(U\cap f^{-1}(V)\cap U_\alpha\cap f^{-1}(V_\alpha))$ the map $\psi\circ f\circ \phi^{-1}$ equals the $\mathcal{C}^p$-map 
$$
(\psi\circ \psi_\alpha^{-1})\circ (\psi_\alpha\circ f\circ \phi_\alpha^{-1})\circ (\phi_\alpha\circ \phi^{-1}).
$$ 
Therefore,  $\psi\circ f\circ \phi^{-1}|_{\phi(U\cap f^{-1}(V)\cap U_\alpha\cap f^{-1}(V_\alpha))}$ is a $\mathcal{C}^p$-map and the restriction $\psi\circ f\circ \phi^{-1}|_{\phi(U\cap f^{-1}(V))}$ is definable. Now put
$$
g_\alpha\mathrel{\mathop :}= {\psi\circ f\circ \phi^{-1}|}_{\phi(U\cap f^{-1}(V)\cap U_\alpha\cap f^{-1}(V_\alpha))}.
$$
By the definition of $\mathcal{C}^p$-map, for each $\alpha$ there is a definable $\mathcal{C}^p$ map $\widetilde{g}_\alpha\colon W_\alpha\to R^m$, with $W_\alpha$ definable open subset of $R^m$ containing the definable set $\phi(U\cap f^{-1}(V)\cap U_\alpha\cap f^{-1}(V_\alpha))$, which extends $g_\alpha$. Set $W\mathrel{\mathop :}= \bigcup_{\alpha\in \Lambda} W_\alpha$. Observe that $W$ is a definable open set in $R^m$ containing each set $\phi(U\cap f^{-1}(V)\cap U_\alpha\cap f^{-1}(V_\alpha))$. Also, $(W,\{\text{id}_{W_\alpha}\colon W_\alpha\to W_\alpha\}_{\alpha\in \Lambda})$ is an abstract-definable $\mathcal{C}^p$ manifold. Theorem \ref{4pu} thus ensures the existence of abstract-definable $\mathcal{C}^p$ functions $\varphi_\alpha\colon W\to R$, $\alpha\in \Lambda$, satisfying the conditions: $\varphi_\alpha\geq 0$, $\text{supp}(\varphi_\alpha)\subseteq W_\alpha$, and $\sum_{\alpha\in \Lambda}\varphi_\alpha=1$. Because the underlying set $W$ and the charts $\text{id}_{W_\alpha}$ are all definable, the functions $\varphi_\alpha$ are also definable. Define $\widetilde{g}\colon W\to R^n$ as 
$$
\widetilde{g}(x)\mathrel{\mathop :}= \sum\limits_{\alpha\in \Lambda} \varphi_\alpha(x)\cdot\widetilde{g}_\alpha(x),
$$
and note that in addition to being definable $\mathcal{C}^p$, $\widetilde{g}$ also agrees with $\psi\circ f\circ \phi^{-1}$ on $\phi(U\cap f^{-1}(V))$.
\end{proof}

\section{Abstract-definable vector bundles}

\begin{definition}\label{1vb}
Let $\pi\colon E\to M$ be an abstract-definable $\mathcal{C}^p$ map between abstract-definable $\mathcal{C}^p$ manifolds satisfying the conditions:
\begin{enumerate}
\item[(i)] for every $x\in M$ the \textit{fiber} at $x$, $E_x\mathrel{\mathop:}= \pi^{-1}(x)$, has the structure of a $d$-dimensional $R$-vector space;
\item[(ii)] $M$ has a finite abstract-definable open cover $\{\Omega_j\}_{j\in J}$ and for each $j\in J$ there exists an abstract-definable $\mathcal{C}^p$ diffeomorphism $\varphi_j\colon \pi^{-1}(\Omega_j)\to \Omega_j\times R^d$ such that $\text{pr}\circ \varphi_j=\pi$ on $\pi^{-1}(\Omega_j)$, where $\text{pr}$ denotes the set-theoretic projection on the first factor $(x,y)\mapsto x$,  and for each $x\in \Omega_j$ the map $\varphi_j|_{E_x}\colon E_x \to \{x\}\times R^d$ is a linear isomorphism.
\end{enumerate}
The triple $(E,M,\pi)$ is then said to be an \textit{abstract-definable $\mathcal{C}^p$ vector bundle} of rank $d$, $E$ the \textit{total space}, and $M$ the \textit{base space}. Also, the collection $\{(\Omega_j,\varphi_j)\}_{j\in J}$ is called a \textit{local trivialization} for $E$ and $\{\Omega_j\}_{j\in J}$ a \textit{trivializing open cover} of $M$.
\end{definition}

 As an abuse of notation, we will often denote an abstract-definable $\mathcal{C}^p$ vector bundle $(E,M, \pi)$ by simply $E$ or $\pi\colon E\to M$.

The tangent and cotangent bundles of $M$ together with their respective projections onto $M$ are the most well known examples of abstract-definable $\mathcal{C}^p$ vector bundles. The fibers at each point of $M$ are respectively the tangent and cotangent spaces. The triple $(M\times R^d, M, \pi)$, where $\pi\colon M\times R^d\to M$ is the projection onto $M$, is an abstract-definable $\mathcal{C}^p$ bundle of rank $d$, called the \textit{trivial vector bundle}. The fiber at every point in $M$ is just the vector space $R^d$.

\begin{definition}\label{2vb}
Let $\pi\colon E\to M$ be a an abstract-definable $\mathcal{C}^p$ vector bundle, and let $U$ be an abstract-definable open subset of $M$. A \textit{local abstract-definable section} of $E$ over $U$ is an abstract-definable map $s\colon U\to E$ satisfying $\pi\circ s=\text{id}_U$. If, in addition, $s$ is $\mathcal{C}^p$, then we say that $s$ is a \textit{local abstract-definable $\mathcal{C}^p$ section}. In the case $U=M$, $s$ is called a \textit{global abstract-definable} (\textit{$\mathcal{C}^p$}) \textit{section}. 
\end{definition}

If $(U,\phi)$ is a chart on $M$, the maps $\partial/\partial x^i\colon U\to TM$, given by $x\mapsto \partial/\partial x^i|_x$ ($i=1,\ldots,m$), are abstract-definable $\mathcal{C}^p$ sections of $TM$ over $U$. Similarly, each $dx^i\colon U\to T^*M$ defined as $x\mapsto dx^i|_x$ is an abstract-definable $\mathcal{C}^p$ section of the cotangent bundle $T^*M$ over $U$. 

\begin{lemma}\label{3vb}
Let $s$ and $t$ be abstract-definable sections of an abstract-definable $\mathcal{C}^p$ vector bundle $\pi\colon E\to M$ over an abstract-definable open set $U\subseteq M$, and let $f\colon U\to R$ be an abstract-definable function. Then the sum $s+t$ and product $fs$ defined respectively by $(s+t)(x)\mathrel{\mathop:}= s(x)+t(x)$ and $(fs)(x)\mathrel{\mathop:}= f(x)\cdot s(x)$ are abstract-definable sections of $E$ over $U$. If in addition $s$, $t$ and $f$ are $\mathcal{C}^p$, then so are $s+t$ and $fs$.
\end{lemma}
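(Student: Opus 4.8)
The plan is to verify every assertion locally, through the trivialization $\{(\Omega_j,\varphi_j)\}_{j\in J}$ of Definition \ref{1vb}, reducing each claim to the elementary fact that finite sums and products of definable functions into $R$ are again definable, and remain $\mathcal{C}^p$ when the factors are. First I would record that $s+t$ and $fs$ really are sections: since $\pi\circ s=\pi\circ t=\mathrm{id}_U$, we have $s(x),t(x)\in E_x$ for every $x\in U$, and because $E_x$ is an $R$-vector space the elements $s(x)+t(x)$ and $f(x)\cdot s(x)$ again lie in $E_x=\pi^{-1}(x)$. Hence both maps land in $E$ and satisfy $\pi\circ(s+t)=\pi\circ(fs)=\mathrm{id}_U$.

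The core of the argument is the passage to principal parts. For each $j\in J$ and $x\in U\cap\Omega_j$ we have $s(x)\in E_x\subseteq\pi^{-1}(\Omega_j)$, and the relation $\mathrm{pr}\circ\varphi_j=\pi$ forces $\varphi_j(s(x))=(x,\sigma^s_j(x))$ for a uniquely determined map $\sigma^s_j\colon U\cap\Omega_j\to R^d$, namely $\sigma^s_j=\mathrm{pr}_2\circ\varphi_j\circ s|_{U\cap\Omega_j}$, where $\mathrm{pr}_2$ is the abstract-definable $\mathcal{C}^p$ projection onto the $R^d$ factor; likewise for $t$. Since $\varphi_j$ is an abstract-definable $\mathcal{C}^p$ diffeomorphism and $\{U\cap\Omega_j\}_{j\in J}$ is a finite abstract-definable open cover of $U$, a section is abstract-definable (resp. $\mathcal{C}^p$) on $U$ exactly when each of its principal parts $\sigma_j$ is so: one direction exhibits $\sigma^s_j$ as a composite of $s$ with the abstract-definable $\mathcal{C}^p$ maps $\varphi_j$ and $\mathrm{pr}_2$, the other recovers $s|_{U\cap\Omega_j}$ from the formula $s(x)=\varphi_j^{-1}(x,\sigma^s_j(x))$ and invokes Lemma \ref{2f*} together with the locality of abstract-definability on the cover.

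With this dictionary in hand the linearity of $\varphi_j|_{E_x}$ finishes the proof. Applying $\varphi_j$ to $(s+t)(x)$ and to $(fs)(x)$, and using that $\varphi_j|_{E_x}\colon E_x\to\{x\}\times R^d$ is a linear isomorphism, yields
$$
\sigma^{s+t}_j=\sigma^s_j+\sigma^t_j,\qquad \sigma^{fs}_j=f|_{U\cap\Omega_j}\cdot\sigma^s_j,
$$
the operations being taken coordinatewise in $R^d$. Each coordinate of $\sigma^s_j$ and $\sigma^t_j$, as well as $f$, is an abstract-definable function into $R$ (a map into $R^d$ being abstract-definable iff its coordinates are, by the product characterization stated before Remark \ref{remarksection1}), and the abstract-definable functions into $R$ form a subring of $\mathcal{F}(U\cap\Omega_j)$; hence $\sigma^{s+t}_j$ and $\sigma^{fs}_j$ are abstract-definable, and by the previous paragraph $s+t$ and $fs$ are abstract-definable sections. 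If moreover $s,t,f$ are $\mathcal{C}^p$, the very same formulas show the principal parts of $s+t$ and $fs$ are $\mathcal{C}^p$, since finite sums and products of $\mathcal{C}^p$ functions are $\mathcal{C}^p$, whence $s+t$ and $fs$ are $\mathcal{C}^p$.

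I anticipate that the only genuinely delicate point is the equivalence between a section being abstract-definable and all of its principal parts being abstract-definable; it rests on the composite of an abstract-definable map with an abstract-definable $\mathcal{C}^p$ map being abstract-definable, and on abstract-definability being checkable on a finite abstract-definable open cover, both of which follow from the pointwise, chart-local nature of the definition (and, in the $\mathcal{C}^p$ setting, from Lemma \ref{2f*}). Everything downstream is the routine ring arithmetic of definable functions.
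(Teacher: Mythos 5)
Your proof is correct, and it follows the route the paper itself relies on: the paper gives no in-text argument for Lemma \ref{3vb} (it cites Lemma 5.3 of the second author's thesis \cite{rf2017}), and the standard argument there is exactly yours --- pass to principal parts via the local trivializations $\varphi_j$, use linearity of $\varphi_j|_{E_x}$ to get $\sigma^{s+t}_j=\sigma^s_j+\sigma^t_j$ and $\sigma^{fs}_j=f\cdot\sigma^s_j$, and reduce to closure of (abstract-)definable $\mathcal{C}^p$ functions under sums and products, with Lemma \ref{2f*} supplying the chart-local criterion. The delicate points you flag (composition stability and locality over the finite cover $\{U\cap\Omega_j\}_{j\in J}$) are indeed the only ones, and they are routine consequences of the definitions as you say.
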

\begin{proof}
Lemma 5.3 (\cite{rf2017}, p. 50).
\end{proof}

\begin{definition}\label{4vb}
A \textit{local absctract-definable frame} for an abstract-definable $\mathcal{C}^p$ vector bundle $\pi\colon E\to M$ of rank $d$ is a $d$-tuple of local sections $(s_1,\ldots,s_d)$ of $E$ over an abstract-definable open subset $U\subseteq M$ such that at each point $x\in U$, the elements $s_1(x),\ldots,s_d(x)$ form a basis for the fiber $E_x$. If, in addition, the sections $s_1,\ldots,s_d$ are $\mathcal{C}^p$, then $(s_1,\ldots,s_d)$ is called a \textit{local abstract-definable $\mathcal{C}^p$ frame} for $E$ over $U$. In the case $U=M$, the $d$-tuple is said to be a \textit{global abstract-definable} (\textit{$\mathcal{C}^p$}) \textit{frame}.
\end{definition}

For any chart $(U,\phi)$ on $M$, $(\partial/\partial x^1, \ldots, \partial/\partial x^m)$ is a local abstract-definable $\mathcal{C}^p$ frame for the tangent bundle $TM$ as well as $(dx^1,\ldots,dx^m)$ forms a local abstract-definable $\mathcal{C}^p$ frame, the \textit{coordinate frame}, for the cotangent bundle $T^*M$.

Suppose $\pi\colon E\to M$ is an abstract-definable $\mathcal{C}^p$ vector bundle of rank $d$ and $\varphi\colon \pi^{-1}(\Omega)\to \Omega\times R^d$ is a local trivialization of $E$. Let $t_1,\ldots,t_d\colon \Omega\to E$ be abstract-definable $\mathcal{C}^p$ maps given by the rule $t_i(x)\mathrel{\mathop:}=(\varphi^{-1}\circ \widetilde{e}_i)(x)$, where $\widetilde{e}_i\colon \Omega\to \Omega\times R^d$ is the abstract-definable $\mathcal{C}^p$ map $x\mapsto (x,e_i)$ and $\{e_j\}_j$ denotes the standard basis for $R^d$. Then $(t_1,\ldots,t_d)$ is an abstract-definable $\mathcal{C}^p$ frame for $E$ over $\Omega$ and is called the local abstract-definable $\mathcal{C}^p$ frame \textit{associated with $\varphi$}.

\begin{lemma}\label{5vb}
Let $\varphi\colon \pi^{-1}(\Omega)\to \Omega\times R^d$ be a trivialization of an abstract-definable $\mathcal{C}^p$ vector bundle $\pi\colon E\to M$, and $(t_1,\ldots,t_d)$ the local abstract-definable $\mathcal{C}^p$ frame associated with $\varphi$. Then, a map $s\mathrel{\mathop:}= \sum_{i=1}^d b^i t_i$, where $b^i$ are $R$-valued functions on $\Omega$, is an abstract-definable $\mathcal{C}^p$ section of $E$ over $\Omega$ if, and only if, its coefficients $b^i$ relative to the frame $(t_1,\ldots,t_d)$ are abstract-definable $\mathcal{C}^p$.  
\end{lemma}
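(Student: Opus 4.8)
The plan is to reduce both implications to the fiberwise linearity of $\varphi$ together with the stability properties of abstract-definable $\mathcal{C}^p$ maps already recorded. The starting point I would establish is a single pointwise identity: since $\varphi(t_i(x))=(x,e_i)$ by the definition of the frame associated with $\varphi$, and since each restriction $\varphi|_{E_x}\colon E_x\to \{x\}\times R^d$ is a linear isomorphism, I compute
$$
\varphi(s(x))=\varphi\Big(\sum_{i=1}^d b^i(x)\,t_i(x)\Big)=\sum_{i=1}^d b^i(x)\,(x,e_i)=\big(x,(b^1(x),\ldots,b^d(x))\big),
$$
where the vector-space operations are carried out in the second factor of $\{x\}\times R^d$. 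Hence $\varphi\circ s=(\mathrm{id}_\Omega,b)$, with $b=(b^1,\ldots,b^d)\colon \Omega\to R^d$ recording the coefficients; equivalently, $b=\mathrm{pr}\circ\varphi\circ s$, where $\mathrm{pr}\colon \Omega\times R^d\to R^d$ is the projection onto the second factor.

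For the ``if'' direction, I would assume the $b^i$ are abstract-definable $\mathcal{C}^p$. Each $t_i$ is an abstract-definable $\mathcal{C}^p$ section of $E$ over $\Omega$, being the frame associated with $\varphi$, so Lemma \ref{3vb}, applied repeatedly, shows that each product $b^i t_i$ and then the finite sum $s=\sum_{i=1}^d b^i t_i$ is again an abstract-definable $\mathcal{C}^p$ section of $E$ over $\Omega$.

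For the ``only if'' direction, I would assume $s$ is abstract-definable $\mathcal{C}^p$. Then $\varphi\circ s\colon \Omega\to \Omega\times R^d$ is a composite of abstract-definable $\mathcal{C}^p$ maps, hence abstract-definable $\mathcal{C}^p$. Invoking the product characterization of abstract-definable $\mathcal{C}^p$ maps from Section 2, applied to $\varphi\circ s=(\mathrm{id}_\Omega,b)$, I conclude that $b\colon \Omega\to R^d$ is abstract-definable $\mathcal{C}^p$; iterating the same characterization for the factors of $R^d=R\times\cdots\times R$ (equivalently, post-composing with the coordinate projections, which are abstract-definable $\mathcal{C}^p$) yields that each component $b^i$ is abstract-definable $\mathcal{C}^p$, as required.

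The argument is essentially routine, and I do not expect a genuine obstacle; the only step requiring care is the first display, where one must invoke simultaneously the fiberwise linearity of $\varphi$ and the convention that addition and scalar multiplication on $\{x\}\times R^d$ act on the second coordinate, so that the coefficients $b^i$ are literally the components of $\mathrm{pr}\circ\varphi\circ s$. Once this identity is in place, the remainder is a bookkeeping exercise relying on Lemma \ref{3vb} and the product characterization of abstract-definable $\mathcal{C}^p$ maps.
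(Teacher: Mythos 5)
Your proof is correct: the identity $\varphi\circ s=(\mathrm{id}_\Omega,b)$ from fiberwise linearity, Lemma \ref{3vb} for the ``if'' direction, and the product characterization of abstract-definable $\mathcal{C}^p$ maps for the ``only if'' direction are exactly the ingredients this argument needs. The paper itself defers this proof to the second author's thesis, but your route is the natural one and is the same strategy the paper visibly uses in the analogous ``if'' direction of Theorem \ref{6vb}, so there is nothing to flag.
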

\begin{proof}
Lemma 5.5 (\cite{rf2017}, p. 52).
\end{proof}

The theorem below is an extension of Lemma \ref{5vb} in the sense that in a similar fashion it characterizes abstract-definable $\mathcal{C}^p$ sections of an abstract-definable $\mathcal{C}^p$ vector bundle $\pi\colon E\to M$ over any abstract-definable open subset of $M$, unlike in Lemma \ref{5vb} where the corresponding sets are elements of a trivializing open cover of $M$. Theorem \ref{6vb} plays fundamental role in allowing us to give a local description of the abstract-definable analogues of global smooth differential forms, examined in the next section.

\begin{theorem}\label{6vb}
Let $\pi\colon E\to M$ be an abstract-definable $\mathcal{C}^p$ vector bundle of rank $d$, and $U$ an abstract-definable open subset of $M$. Suppose $(s_1,\ldots,s_d)$ is a $\mathcal{C}^p$ frame for $E$ over $U$. Then the map $s\mathrel{\mathop:}= \sum_{i=1}^d c^i s_i$, where $c^i$ are $R$-valued functions on $U$, is an abstract-definable $\mathcal{C}^p$ section of $E$ over $U$ if and only if the coefficients $c^i$ are abstract-definable $\mathcal{C}^p$.
\end{theorem}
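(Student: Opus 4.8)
The plan is to prove the two implications separately, with the ``only if'' direction carrying essentially all the work. For the ``if'' direction, suppose each $c^i$ is abstract-definable $\mathcal{C}^p$. Since $(s_1,\dots,s_d)$ is a $\mathcal{C}^p$ frame, each $s_i$ is an abstract-definable $\mathcal{C}^p$ section, so by Lemma \ref{3vb} each product $c^i s_i$ is an abstract-definable $\mathcal{C}^p$ section of $E$ over $U$, and hence so is the finite sum $s=\sum_{i=1}^d c^i s_i$. This direction is immediate and requires nothing beyond Lemma \ref{3vb}.

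For the ``only if'' direction, I would assume $s$ is an abstract-definable $\mathcal{C}^p$ section over $U$ and fix the (finite) trivializing open cover $\{\Omega_j\}_{j\in J}$ of $M$ furnished by Definition \ref{1vb}, with trivializations $\varphi_j$ and associated $\mathcal{C}^p$ frames $(t_1^{(j)},\dots,t_d^{(j)})$ over $\Omega_j$. Set $W_j:=\Omega_j\cap U$; these form a finite abstract-definable open cover of $U$, and the restriction of $\varphi_j$ to $\pi^{-1}(W_j)$ is again a trivialization whose associated frame is $(t_k^{(j)}|_{W_j})_k$, so Lemma \ref{5vb} applies verbatim over $W_j$. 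On $W_j$ both $(s_i)$ and $(t_k^{(j)})$ are $\mathcal{C}^p$ frames, so there are $R$-valued functions $(g_j)_i^{k}$ with $s_i=\sum_k (g_j)_i^{k}\, t_k^{(j)}$. Because each $s_i$ is an abstract-definable $\mathcal{C}^p$ section, the ``only if'' part of Lemma \ref{5vb} already tells us that every transition coefficient $(g_j)_i^{k}$ is abstract-definable $\mathcal{C}^p$ on $W_j$, and at each point of $W_j$ the matrix $G_j=\big((g_j)_i^{k}\big)$ is invertible since both families are bases of the fibers.

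The core step is then to invert this change of frame. Expanding $s=\sum_i c^i s_i=\sum_k\big(\sum_i (g_j)_i^{k}c^i\big) t_k^{(j)}$ shows that the coefficients $b^{k}:=\sum_i (g_j)_i^{k}c^i$ are precisely the coefficients of $s|_{W_j}$ in the associated frame; since $s|_{W_j}$ is abstract-definable $\mathcal{C}^p$, Lemma \ref{5vb} gives that each $b^{k}$ is abstract-definable $\mathcal{C}^p$ on $W_j$. I would then solve the resulting linear system for the $c^i$ by Cramer's rule: the entries of $G_j^{-1}$ are polynomials in the $(g_j)_i^{k}$ divided by the nowhere-vanishing $\det G_j$, and since matrix inversion $\mathrm{GL}_d(R)\to \mathrm{GL}_d(R)$ is a semialgebraic $\mathcal{C}^\infty$ map, composing it with the abstract-definable $\mathcal{C}^p$ map $x\mapsto G_j(x)$ yields abstract-definable $\mathcal{C}^p$ entries $(h_j)^{i}_{k}$. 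Hence $c^i=\sum_k (h_j)^{i}_{k}\,b^{k}$ is a finite sum of products of abstract-definable $\mathcal{C}^p$ functions, and so is abstract-definable $\mathcal{C}^p$ on each $W_j$ (using the ring structure of abstract-definable $\mathcal{C}^p$ functions, i.e.\ Lemma \ref{3vb} for the trivial line bundle).

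Finally I would glue: each $c^i$ is abstract-definable $\mathcal{C}^p$ on every member of the finite abstract-definable open cover $\{W_j\}$ of $U$, so by the local criterion of Lemma \ref{2f*} (every point of $U$ lies in some $W_j$ on which $c^i$ is locally a $\mathcal{C}^p$-map in suitable charts) the function $c^i$ is abstract-definable $\mathcal{C}^p$ on all of $U$. I expect the main obstacle to be precisely the passage from the special frames of Lemma \ref{5vb}, defined only over trivializing sets, to the arbitrary $\mathcal{C}^p$ frame $(s_1,\dots,s_d)$ over the arbitrary abstract-definable open set $U$; this is resolved by the transition matrix together with the tameness (definability and smoothness) of matrix inversion, with the finiteness of the trivializing cover ensuring that the local-to-global gluing is unproblematic.
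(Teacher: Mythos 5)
Your proof is correct and follows essentially the same route as the paper's: expand the given frame and the section in the frame associated with a local trivialization, apply Lemma \ref{5vb} to conclude those coefficients are abstract-definable $\mathcal{C}^p$, and invert the transition matrix by Cramer's rule. The only difference is bookkeeping: the paper localizes at charts contained in $U$ that are simultaneously trivializing sets and leaves the local-to-global step implicit, whereas you work on the finite cover $\{\Omega_j\cap U\}_{j\in J}$ and make the gluing explicit via Lemma \ref{2f*}.
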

\begin{proof}
Assume that $s$ is an abstract-definable $\mathcal{C}^p$ section of $E$ over $U$. At any point in $U$ there is a chart $\Omega\subseteq U$ on $M$ which is also a trivializing open set for $E$, with $\varphi$ as its corresponding trivialization. Let $(t_1,\ldots,t_d)$ be the local abstract-definable $\mathcal{C}^p$ frame associated with $\varphi$. If $(b^i)_i$ and $(a^i_j)_i$ are the coefficients of $s$ and $s_j$ in terms of the frame $(t_1,\ldots,t_d)$ respectively, then in view of Lemma \ref{5vb} they are abstract-definable $\mathcal{C}^p$ functions. From $\sum_i b^it_i=\sum_{i,j}c^ia^i_jt_i$ we have the matrix equality $[b^i]=[a_j^i][c^j]$, with $[a_j^i]$ invertible. By Cramer's rule, each $c^i$ is abstract-definable $\mathcal{C}^p$ on $\Omega$. The ``if'' direction is just a straightforward application of Lemma \ref{3vb}.
\end{proof}

\section{Abstract-definable forms}

\begin{definition}\label{1f}
An \textit{abstract-definable} (\textit{$\mathcal{C}^p$}) \textit{1-form} on $M$ is an abstract-definable ($\mathcal{C}^p$) section $\omega$ of the cotangent bundle $\pi\colon T^*M\to M$.
\end{definition}

In order to give a characterization of abstract-definable $\mathcal{C}^p$ $1$-forms on $M$ in terms of the coordinate frames, we restate Theorem \ref{6vb} for $E=T^*M$ and $s_i=dx^i$.

\begin{lemma}\label{2f}
Let $(U,\phi)$ be a chart on $M$. The map $\omega\mathrel{\mathop:}= \sum_{i=1}^m \omega_i dx^i$, where $\omega_i$ are $R$-valued functions, is an abstract-definable $\mathcal{C}^p$ 1-form on $U$ if and only if the coefficients $\omega_i$ are abstract-definable $\mathcal{C}^p$. 
\end{lemma}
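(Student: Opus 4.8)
The plan is to recognize this lemma as nothing more than the specialization of Theorem \ref{6vb} to the cotangent bundle, so that the entire argument reduces to verifying that the hypotheses of that theorem are met in the present setting. First I would recall from Definition \ref{1f} that an abstract-definable $\mathcal{C}^p$ 1-form on $U$ is \emph{by definition} precisely an abstract-definable $\mathcal{C}^p$ section of the cotangent bundle $\pi\colon T^*M\to M$ over $U$; thus the object under consideration is exactly the kind of object governed by Theorem \ref{6vb}.

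Next I would identify the correct data to feed into Theorem \ref{6vb}. I take $E=T^*M$, which is an abstract-definable $\mathcal{C}^p$ vector bundle of rank $d=m$, and I take the frame to be the coordinate frame $(dx^1,\ldots,dx^m)$ attached to the chart $(U,\phi)$. It was already established (in the discussion following Definition \ref{4vb}) that $(dx^1,\ldots,dx^m)$ is a local abstract-definable $\mathcal{C}^p$ frame for $T^*M$ over $U$, so the standing hypothesis of Theorem \ref{6vb} that one has a $\mathcal{C}^p$ frame over $U$ is satisfied without further work.

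With these identifications in place, Theorem \ref{6vb} applies verbatim: writing $\omega=\sum_{i=1}^m \omega_i\,dx^i$ with $R$-valued coefficient functions $\omega_i$, the theorem asserts that $\omega$ is an abstract-definable $\mathcal{C}^p$ section of $T^*M$ over $U$ — equivalently, by Definition \ref{1f}, an abstract-definable $\mathcal{C}^p$ 1-form on $U$ — if and only if each coefficient $\omega_i$ is abstract-definable $\mathcal{C}^p$. This is exactly the claimed equivalence, so no additional argument is required. Since the statement is a pure translation of an already-proved result into the vocabulary of 1-forms, I do not expect any genuine obstacle; the only point demanding care is the correct matching of notation between the two statements, namely $d=m$, $s_i=dx^i$, and $c^i=\omega_i$.
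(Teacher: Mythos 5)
Your proposal is correct and is exactly the paper's own argument: the paper introduces Lemma \ref{2f} with the remark that it is obtained by ``restating Theorem \ref{6vb} for $E=T^*M$ and $s_i=dx^i$,'' which is precisely your specialization, including the observation that $(dx^1,\ldots,dx^m)$ is already known to be a local abstract-definable $\mathcal{C}^p$ frame for $T^*M$ over $U$. No gap; nothing further is needed.
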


\begin{theorem}\label{3f}
Let $\omega\colon M\to T^* M$ be a map that satisfies the equality $\pi\circ \omega=\text{id}_M$. The following are equivalent: 
\begin{enumerate}
\item[(i)] $\omega$ is an abstract-definable $\mathcal{C}^p$ 1-form on $M$.
\item[(ii)] For any chart $(U,\phi)$ on $M$, the map $\omega$ restricted to $U$ is given by $x\mapsto \sum_{i=1}^m \omega_i(x) dx^i|_x$, where the functions $\omega_i\colon U\to R$ are abstract-definable $\mathcal{C}^p$.
\item[(iii)] For any point $x\in M$ there is a chart $(U,\phi)$ on $M$ at $x$ such that the restriction $\omega|_U$ is given by $z\mapsto \sum_{i=1}^m \omega_i(z) dx^i|_z$, where the functions $\omega_i\colon U\to R$ are abstract-definable $\mathcal{C}^p$.
\end{enumerate}
\end{theorem}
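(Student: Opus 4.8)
The plan is to prove the cycle of implications $(i)\Rightarrow(ii)\Rightarrow(iii)\Rightarrow(i)$, using Lemma \ref{2f} as the essential bridge between the local coordinate description and abstract-definability. The work is mostly in setting up the right restrictions and checking that abstract-definability is a local property with respect to the atlas.

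First I would show $(i)\Rightarrow(ii)$. Assume $\omega$ is an abstract-definable $\mathcal{C}^p$ $1$-form on $M$, and fix an arbitrary chart $(U,\phi)$ on $M$. The restriction $\omega|_U$ is an abstract-definable $\mathcal{C}^p$ section of $T^*M$ over the abstract-definable open set $U$; here one must note that restricting an abstract-definable $\mathcal{C}^p$ section to an abstract-definable open subset again yields one, which follows since abstract-definability and the $\mathcal{C}^p$ condition are checked chartwise. Since $(dx^1,\ldots,dx^m)$ is the coordinate frame for $T^*M$ over $U$, we may write $\omega|_U=\sum_{i=1}^m \omega_i\,dx^i$ for uniquely determined $R$-valued functions $\omega_i$ on $U$. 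Lemma \ref{2f} then immediately gives that each $\omega_i$ is abstract-definable $\mathcal{C}^p$, which is exactly the content of $(ii)$.

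The implication $(ii)\Rightarrow(iii)$ is trivial: statement $(iii)$ only asks for the existence, at each point $x$, of one chart with the stated property, whereas $(ii)$ supplies it for every chart, so taking any chart $(U,\phi)$ at $x$ suffices. For $(iii)\Rightarrow(i)$, I would argue that the local coordinate representation forces $\omega$ to be a section which is abstract-definable $\mathcal{C}^p$ near every point, and then invoke that this local condition globalizes. Concretely, fix $x\in M$ and take the chart $(U,\phi)$ at $x$ provided by $(iii)$; by Lemma \ref{2f}, the hypothesis that the coefficients $\omega_i$ are abstract-definable $\mathcal{C}^p$ is equivalent to $\omega|_U$ being an abstract-definable $\mathcal{C}^p$ $1$-form on $U$, i.e.\ an abstract-definable $\mathcal{C}^p$ section of $T^*M$ over $U$. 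Thus $\omega$ agrees near $x$ with an abstract-definable $\mathcal{C}^p$ section. To conclude that $\omega$ itself is globally an abstract-definable $\mathcal{C}^p$ section, I would appeal to the local nature of abstract-definability together with Lemma \ref{2f*}, which certifies that a map is abstract-definable $\mathcal{C}^p$ as soon as one can verify it chartwise in suitable charts at each point.

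The main obstacle I anticipate is the passage from ``abstract-definable $\mathcal{C}^p$ on each chart domain'' to ``abstract-definable $\mathcal{C}^p$ on all of $M$'' in $(iii)\Rightarrow(i)$. Since the atlas $\mathcal{A}$ is finite, one can cover $M$ by finitely many chart domains $U$ of the form guaranteed by $(iii)$, and the abstract-definability of the $\omega_i$ on overlaps must be reconciled via the transition maps $\phi_j\circ\phi_i^{-1}$, which are definable $\mathcal{C}^p$ diffeomorphisms; the change-of-frame for the $dx^i$ is governed by the Jacobian of the transition map, so compatibility of the coordinate expressions is automatic and the resulting section is well defined. The only care needed is to phrase the verification of abstract-definability of $\omega$ as a section exactly in the form demanded by Lemma \ref{2f*}, after which the finiteness of $\mathcal{A}$ removes any issue that a countable or uncountable patching argument would raise in the classical setting.
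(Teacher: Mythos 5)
Your proof is correct and takes essentially the same approach as the paper's: Lemma \ref{2f} applied to $\omega|_U$ gives (i)$\Rightarrow$(ii), (ii)$\Rightarrow$(iii) is immediate, and (iii)$\Rightarrow$(i) is exactly the paper's application of Lemma \ref{2f*}, which the paper carries out explicitly by exhibiting the local representative $(\text{id}_{\phi(U)},\omega_1\circ\phi^{-1},\ldots,\omega_m\circ\phi^{-1})$ restricted to $\phi(U\cap \omega^{-1}(T^*U))=\phi(U)$, this set being definable because $\pi\circ\omega=\text{id}_M$. One remark: your closing worry about reconciling the coefficients $\omega_i$ on overlaps via Jacobians of transition maps is a non-issue, since $\omega$ is a globally given map (nothing is being glued) and Lemma \ref{2f*} reduces abstract-definability to a pointwise check in a single suitable pair of charts.
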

\begin{proof}
(i)$\Rightarrow$(ii) For any chart $(U,\phi)$, the restriction of $\omega$ to $U$ can be written as $\sum_{i=1}^m\omega_idx^i$, with $\omega_i$ a function on $U$. Assuming (i), $\omega|_U$ is an abstract-definable $\mathcal{C}^p$ section of $T^*M$ over $U$, and therefore as a consequence of Lemma \ref{2f} the functions $\omega_i$ are abstract-definable $\mathcal{C}^p$.

(ii)$\Rightarrow$(iii) Straightforward.

(iii)$\Rightarrow$(i) Let $x$ be a point in $M$, and by virtue of (iii) let $(U,\phi)$ be a chart on $M$ at $x$ on which $\omega$ is written as $\sum_{i=1}^m\omega_idx^i$, where $\omega_i$ is an abstract-definable $\mathcal{C}^p$ function on $U$. Consider $\widetilde{\phi}\colon T^*U\to \phi(U)\times R^m$ the induced chart on $T^*M$ by $\phi$, and note that $\omega(x)\in T^*U$. To obtain (i) it suffices to conclude, according to Lemma \ref{2f*}, that $\widetilde{\phi}\circ \omega\circ \phi^{-1}$ restricted to $\phi(U\cap \omega^{-1}(T^*U))$ is definable and is extended by a definable $\mathcal{C}^p$ map defined on a definable open subset of $R^m$. But this holds since $\widetilde{\phi}\circ \omega\circ \phi^{-1}|_ {\phi(U\cap \omega^{-1}(T^*U))}$ equals the restriction of the definable $\mathcal{C}^p$ map $(\text{id}_{\phi(U)},\omega_1\circ \phi^{-1},\ldots,\omega_m\circ \phi^{-1})$ to $\phi(U\cap \omega^{-1}(T^*U))$, which is a definable set in view of the assumption $\pi\circ \omega=\text{id}_M$ and Lemma \ref{2f}.
\end{proof}

If $f\colon M\to R$ is an abstract-definable $\mathcal{C}^p$ function with $p\geq 2$, the \textit{differential} $df$ of $f$ defined as $x\mapsto d_x f$ is an abstract-definable $\mathcal{C}^{p-1}$ $1$-form on $M$. Given a chart $(U,\phi)$ on $M$, the differential of $f$ on $U$ has the well known expression 
$$
df=\sum\limits_{i=1}^m\frac{\partial f}{\partial x^i}dx^i,
$$
where $(\partial f/\partial x^i) (x)\mathrel{\mathop:}=(\partial (f\circ \phi^{-1})/\partial r^i)(\phi(x))$, $x\in U$.

Let $F\colon N\to M$ be an abstract-definable $\mathcal{C}^p$ map. For any abstract-definable $\mathcal{C}^p$ function $g\colon M\to R$, we define the \textit{pullback} of $g$ by $F$ to be the composition $F^*g\mathrel{\mathop:}= g\circ F$, which is an abstract-definable $\mathcal{C}^p$ function on $N$. Now, consider a map $\omega$ from $M$ to $T^* M$ with $\pi\circ \omega=\text{id}_M$. The \textit{pullback} of $\omega$ by $F$ is the map $F^*\omega\colon N\to T^* N$ given by $x\mapsto (F^*\omega)_x$, where $(F^*\omega)_x\colon T_x N\to R$ is the linear function $v\mapsto \omega_{F(x)}(d_x F (v))$. The set of all such maps $\omega$, together with the pointwise operations, forms an $R$-vector space and an $\mathcal{F}(M)$-module.

The chain rule and the definition of pullback give the following. 

\begin{lemma}\label{4f}
Let $F\colon N\to M$ be an abstract-definable $\mathcal{C}^p$ map, $g\colon M\to R$ an abstract-definable $\mathcal{C}^p$ function, and let $\omega,\tau\colon M\to T^*M$ be maps whose composition of $\pi$ with them gives $\text{id}_M$. Then, 
\begin{enumerate}
\item[(i)] $F^*(dg)=d(F^*g)$, where $dg$, $d(F^*g)$ are the differentials of $g$ and $F^*g$, respectively;
\item[(ii)] $F^*(\omega+\tau)=F^*\omega+F^*\tau$;
\item[(iii)] $F^*(g\omega)=(F^*g)(F^*\omega)=(g\circ F) F^*\omega$.
\end{enumerate}
\end{lemma}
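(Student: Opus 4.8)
The plan is to verify all three identities pointwise: each is an equality of maps from $N$ into $T^*N$, and the pullback of such a map is defined fibrewise. Throughout I fix a point $x\in N$ together with a tangent vector $v\in T_x N$, unwind the defining formula $(F^*\omega)_x(v)=\omega_{F(x)}(d_x F(v))$, and compare the two sides.

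For (i), I would compute the left-hand side as
$$
(F^*(dg))_x(v)=(dg)_{F(x)}(d_x F(v))=d_{F(x)}g\,(d_x F(v)),
$$
using that $dg$ is the map $y\mapsto d_y g$. The right-hand side is, by the definition of the differential of the function $F^*g=g\circ F$,
$$
(d(F^*g))_x(v)=d_x(g\circ F)(v).
$$
The chain rule for abstract-definable $\mathcal{C}^p$ maps stated earlier gives $d_x(g\circ F)=d_{F(x)}g\circ d_x F$, so the two expressions agree and (i) follows. This is the step where the chain rule does all the work; everything else is bookkeeping.

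For (ii) and (iii) I would appeal only to the pointwise vector-space and module structures on the cotangent fibres. For (ii), one has $(\omega+\tau)_{F(x)}=\omega_{F(x)}+\tau_{F(x)}$ as covectors at $F(x)$, whence evaluating at $d_x F(v)$ and splitting the sum yields $(F^*\omega)_x(v)+(F^*\tau)_x(v)$. For (iii), $(g\omega)_{F(x)}=g(F(x))\,\omega_{F(x)}$ by definition of the $\mathcal{F}(M)$-module structure, so
$$
(F^*(g\omega))_x(v)=g(F(x))\,\omega_{F(x)}(d_x F(v))=(g\circ F)(x)\,(F^*\omega)_x(v);
$$
recognizing $g\circ F=F^*g$ gives both claimed equalities at once.

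There is no genuine obstacle here: all three are formal consequences of the definitions, with the chain rule (already available) supplying the only nontrivial input, in part (i). The only point requiring a moment's care is the identification $T_{g(x)}R\equiv R$ under which $d_y g$ is read as a covector in $T_y^*M$, so that the symbols $dg$ and $F^*(dg)$ denote honest sections of the respective cotangent bundles; once this convention is fixed, the computations above are unambiguous.
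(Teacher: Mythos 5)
Your proof is correct and follows exactly the route the paper intends: the paper gives no detailed argument, remarking only that ``the chain rule and the definition of pullback give the following,'' and your pointwise verification---chain rule for (i), fibrewise linearity and the module structure for (ii) and (iii), with the identification $T_{g(x)}R\equiv R$ noted---is precisely that argument written out.
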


\begin{theorem}\label{5f}
The pullback $F^*\omega$ of an abstract-definable $\mathcal{C}^p$ 1-form $\omega$ on $M$ under an abstract-definable $\mathcal{C}^p$ map $F\colon N\to M$ with $p\geq 2$ is an abstract-definable $\mathcal{C}^{p-1}$ 1-form on $N$.
\end{theorem}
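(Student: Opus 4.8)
The plan is to reduce the statement to the local characterization of abstract-definable $\mathcal{C}^{p-1}$ $1$-forms furnished by Theorem \ref{3f}, and then to carry out a single local computation using the algebraic properties of the pullback recorded in Lemma \ref{4f}. First I would observe that $F^*\omega$ already satisfies $\pi\circ F^*\omega=\text{id}_N$: by construction each $(F^*\omega)_x$ is the covector $v\mapsto \omega_{F(x)}(d_xF(v))$ lying in $T^*_xN$. Hence $F^*\omega$ is exactly the sort of map to which Theorem \ref{3f} (with $p-1$ in place of $p$) applies, and it suffices to verify condition (iii) there: for each $x\in N$ produce a chart on which $F^*\omega$ has abstract-definable $\mathcal{C}^{p-1}$ coefficients relative to the coordinate coframe.

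Fix $x\in N$ and choose a chart $(U,\phi)$ on $M$ at $F(x)$, writing $\phi=(\phi^1,\dots,\phi^m)$ so that $dx^i=d\phi^i$ on $U$. Applying Theorem \ref{3f}, (i)$\Rightarrow$(ii), to the given $\mathcal{C}^p$ $1$-form $\omega$, I may write $\omega=\sum_{i=1}^m\omega_i\,d\phi^i$ on $U$ with each $\omega_i\colon U\to R$ abstract-definable $\mathcal{C}^p$. The key computation is then carried out on the abstract-definable open set $F^{-1}(U)\ni x$: using the additivity Lemma \ref{4f}(ii), the function-linearity (iii), and the commutation of $d$ with pullback (i), I get the identity
$$
F^*\omega=\sum_{i=1}^m F^*(\omega_i\,d\phi^i)=\sum_{i=1}^m (\omega_i\circ F)\,d(\phi^i\circ F)
$$
of maps $F^{-1}(U)\to T^*N$.

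Now each $\phi^i\circ F\colon F^{-1}(U)\to R$ is abstract-definable $\mathcal{C}^p$, being a component of the composite $\phi\circ F$ of abstract-definable $\mathcal{C}^p$ maps (chain rule); since $p\geq 2$, its differential $d(\phi^i\circ F)$ is an abstract-definable $\mathcal{C}^{p-1}$ $1$-form, as recorded just before Lemma \ref{4f}. The coefficient $\omega_i\circ F$ is abstract-definable $\mathcal{C}^p$, a fortiori $\mathcal{C}^{p-1}$. By Lemma \ref{3vb} (taken with $E=T^*N$) each product $(\omega_i\circ F)\,d(\phi^i\circ F)$, and hence the finite sum $F^*\omega$, is an abstract-definable $\mathcal{C}^{p-1}$ $1$-form on $F^{-1}(U)$. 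To land in a genuine coordinate chart I would finally pick a chart $(V,\psi)$ on $N$ at $x$ with $V\subseteq F^{-1}(U)$ and, writing $\psi=(y^1,\dots,y^n)$, expand $d(\phi^i\circ F)=\sum_{j=1}^n\big(\partial(\phi^i\circ F)/\partial y^j\big)\,dy^j$ to obtain
$$
F^*\omega|_V=\sum_{j=1}^n\Big(\sum_{i=1}^m (\omega_i\circ F)\,\partial(\phi^i\circ F)/\partial y^j\Big)\,dy^j,
$$
whose coefficients are abstract-definable $\mathcal{C}^{p-1}$, being products and sums of such functions. Since $x$ was arbitrary this verifies condition (iii) of Theorem \ref{3f}, and the equivalence (iii)$\Rightarrow$(i) gives that $F^*\omega$ is an abstract-definable $\mathcal{C}^{p-1}$ $1$-form on $N$.

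I expect the only genuinely delicate point to be the first displayed identity: it must be checked as an equality of maps into $T^*N$, i.e. pointwise on tangent vectors, rather than as a formal symbolic manipulation — and this is precisely what Lemma \ref{4f}(i)--(iii) is designed to secure, the commutation $F^*(d\phi^i)=d(\phi^i\circ F)$ being where the chain rule, hence the loss of one degree of differentiability and the need for $p\ge 2$, enters. The remaining steps are routine bookkeeping of differentiability classes, handled uniformly by Lemmas \ref{3vb} and \ref{4f} together with the local characterization Theorem \ref{3f}.
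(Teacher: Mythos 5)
Your proof is correct: the reduction to Theorem \ref{3f}(iii), the pointwise identity $F^*\omega=\sum_{i}(\omega_i\circ F)\,d(\phi^i\circ F)$ on $F^{-1}(U)$ secured by Lemma \ref{4f}, and the $\mathcal{C}^{p-1}$ bookkeeping via Lemma \ref{3vb} and the final chart expansion are all sound, with the loss of one derivative entering exactly where you say it does (the chain-rule identity $F^*(d\phi^i)=d(\phi^i\circ F)$). The paper itself gives no argument here---it defers to Proposition 6.6 of \cite{rf2017}---and your local-coordinate computation is the standard proof in the style that source follows (\cite{Tu2010}), so it is essentially the intended argument.
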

\begin{proof}
Proposition 6.6 (\cite{rf2017}, p. 60).
\end{proof}

Let $k$ be a nonnegative integer. The k\textit{th exterior power of the cotangent bundle} $\bigwedge^kT^* M$ of $M$ is the disjoint union $\bigcup_{x\in M} \{x\}\times \bigwedge^kT^*_x M$, where $\bigwedge^kT^*_x M$ denotes the $R$-vector space of all alternating $k$-linear functions $T_x^*M\times\cdots\times T_x^*M\to R$. Any chart $(U,\phi)$  on $M$ induces a chart $\widetilde{\phi}\colon \bigwedge^kT^* U\to \phi(U)\times R^{\binom{m}{k}}$ on $\bigwedge^kT^*M$ given by
$$
(x,\sum_I a_Idx^I|_x)\mapsto (\phi(x),(a_I)_I),
$$ 
where $I\in \{(i_1,\ldots,i_k)\in \mathbb{N}^k\,:\, 1\leq i_1<\cdots <i_k\leq m\}$, $dx^I|_x\mathrel{\mathop:}= dx^{i_1}|_x \wedge \cdots \wedge dx^{i_k}|_x$, and $\bigwedge^kT^* U\mathrel{\mathop:}= \bigcup_{x\in U} \{x\}\times \bigwedge^kT_x^* U$. This makes the $k$th exterior power of the cotangent bundle of $M$ into an abstract-definable $\mathcal{C}^p$ manifold of dimension $m+\binom{m}{k}$. Moreover, the natural projection $\pi\colon \bigwedge^kT^*M\to M$ is an abstract-definable $\mathcal{C}^p$ vector bundle of rank $\binom{m}{k}$ whose fibers at each $x\in M$ are the vector spaces $\bigwedge^k T_x^*M$.

\begin{definition}\label{6f}
An \textit{abstract-definable} (\textit{$\mathcal{C}^p$}) \textit{$k$-form} on $M$ is an abstract-definable ($\mathcal{C}^p$) section $\omega$ of $\pi\colon \bigwedge^kT^*M\to M$, the $k$th exterior power of the cotangent bundle of $M$.
\end{definition}

Since $\bigwedge^0 T^*M=M\times R$, the vector space of all abstract-definable $\mathcal{C}^p$ $0$-forms equals that of all abstract-definable $\mathcal{C}^p$ functions on $M$. 
\\

\textsc{Notation}. 
For the remainder of the text, $I$ denotes a $k$-tuple $(i_1,\ldots,i_k)$ with $1\leq i_1<\cdots<i_k\leq m$, $dx^I$ is a short for $dx^{i_1}\wedge\cdots \wedge dx^{i_k}$, and ${dx^I|}_x$ designates ${dx^{i_1}|}_x\wedge\cdots\wedge {dx^{i_k}|}_x$. Also, $\pi$ stands for the projection $\bigwedge^kT^*M\to M$.
\\

Note that given a chart $(U,\phi)=(U,x^1,\ldots,x^m)$ on $M$, the $\binom{m}{k}$-tuple $(dx^I)_I$ forms a local $\mathcal{C}^p$ frame for the abstract-definable $\mathcal{C}^p$ vector bundle $\bigwedge^k T^* M$ over $U$.

In the sequel, we restate Lemma \ref{2f} and Theorem \ref{3f} for abstract-definable $\mathcal{C}^p$ $k$-forms. 

\begin{lemma}\label{7f}
Let $(U,\phi)$ be a chart on $M$. The map $\omega\mathrel{\mathop:}= \sum_I\omega_Idx^I$, where $\omega_I$ are $R$-valued functions on $U$, is an abstract-definable $\mathcal{C}^p$ $k$-form on $U$ if and only if the coefficients $\omega_I$ are abstract-definable $\mathcal{C}^p$.
\end{lemma}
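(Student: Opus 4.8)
The plan is to reduce Lemma \ref{7f} to an already-established result, exactly as the paper did for the $1$-form case (Lemma \ref{2f}). The key observation is that Lemma \ref{7f} is simply Theorem \ref{6vb} specialized to the abstract-definable $\mathcal{C}^p$ vector bundle $E=\bigwedge^kT^*M$ of rank $\binom{m}{k}$, with the local $\mathcal{C}^p$ frame taken to be the coordinate frame $(dx^I)_I$. First I would recall the remark immediately preceding the lemma: given a chart $(U,\phi)=(U,x^1,\ldots,x^m)$, the $\binom{m}{k}$-tuple $(dx^I)_I$ is a local abstract-definable $\mathcal{C}^p$ frame for $\bigwedge^kT^*M$ over $U$. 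This is the hypothesis needed to invoke Theorem \ref{6vb}.

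With that frame in hand, I would write the section $\omega=\sum_I\omega_I\,dx^I$ over $U$ and observe that this is precisely a section of $E$ expressed in terms of the $\mathcal{C}^p$ frame $(dx^I)_I$, with the $R$-valued functions $\omega_I$ playing the role of the coefficients $c^i$ in the statement of Theorem \ref{6vb}. Theorem \ref{6vb} then asserts directly that $\omega$ is an abstract-definable $\mathcal{C}^p$ section of $E$ over $U$ (that is, an abstract-definable $\mathcal{C}^p$ $k$-form on $U$, by Definition \ref{6f}) if and only if each coefficient $\omega_I$ is abstract-definable $\mathcal{C}^p$. This gives both directions of the equivalence at once.

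There is essentially no obstacle here: the work has already been front-loaded into Theorem \ref{6vb}, whose proof used a local trivialization, Lemma \ref{5vb}, and Cramer's rule to pass between the arbitrary frame $(s_i)$ and a trivialization-associated frame. The only thing to check is that $(dx^I)_I$ genuinely is a $\mathcal{C}^p$ frame over $U$ in the sense of Definition \ref{4vb}, i.e. that the covectors $dx^I|_x$ form a basis of $\bigwedge^kT_x^*M$ at each $x\in U$ and that the sections $dx^I$ are themselves abstract-definable $\mathcal{C}^p$; both are immediate from the construction of the chart $\widetilde\phi$ on $\bigwedge^kT^*M$ and the fact that the $dx^i$ are abstract-definable $\mathcal{C}^p$ sections of $T^*M$. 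Thus the entire proof reduces to a single sentence citing Theorem \ref{6vb} applied to this frame, and I would present it as such rather than redoing any computation.
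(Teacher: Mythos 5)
Your proposal is correct and matches the paper's intended route: the paper itself only cites Lemma 6.12 of \cite{rf2017}, but it prepares exactly this argument by noting, immediately before the lemma, that $(dx^I)_I$ is a local abstract-definable $\mathcal{C}^p$ frame for $\bigwedge^k T^*M$ over $U$, just as it obtained Lemma \ref{2f} by restating Theorem \ref{6vb} for $E=T^*M$ and $s_i=dx^i$. Your one-sentence reduction via Theorem \ref{6vb}, together with the check that $(dx^I)_I$ is genuinely a $\mathcal{C}^p$ frame, is precisely that specialization.
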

\begin{proof}
Lemma 6.12 (\cite{rf2017}, p. 64).
\end{proof}

By following the proof of Theorem \ref{3f}, we obtain the subsequent characterizations of the abstract-definable $\mathcal{C}^p$ $k$-forms.

\begin{theorem}\label{8f}
Let $\omega\colon M\to \bigwedge^kT^*M$ that satisfies the equality $\pi\circ \omega=\text{id}_M$. The following are equivalent.
\begin{enumerate}
\item[(i)] $\omega$ is an abstract-definable $\mathcal{C}^p$ $k$-form on $M$.
\item[(ii)] For any chart $(U,\phi)$ on $M$, the map $\omega$ restricted to $U$ is given by $x\mapsto \sum_{I}\omega_I(x){dx^I|}_x$, where the functions $\omega_I\colon U\to R$ are abstract-definable $\mathcal{C}^p$.
\item[(iii)] For any point $x\in M$ there is a chart $(U,\phi)$ on $M$ at $x$ such that the restriction ${\omega|}_U$ is given by $z\mapsto \sum_{I}\omega_I(z){dx^I|}_z$, where the functions $\omega_I\colon U\to R$ are abstract-definable $\mathcal{C}^p$.
\end{enumerate}
\end{theorem}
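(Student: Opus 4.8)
The plan is to mirror the proof of Theorem \ref{3f} essentially verbatim, replacing the single summation index $i\in\{1,\ldots,m\}$ by the multi-index $I$, the fibre dimension $m$ by $\binom{m}{k}$, and the appeal to Lemma \ref{2f} by the corresponding appeal to its $k$-form analogue Lemma \ref{7f}. No new idea is needed; the work is purely to carry the multi-index bookkeeping through the frame decomposition and through the formula for the induced chart on $\bigwedge^k T^*M$.

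First I would establish (i)$\Rightarrow$(ii). Fix an arbitrary chart $(U,\phi)$ on $M$. Since $(dx^I)_I$ is a local $\mathcal{C}^p$ frame for $\bigwedge^k T^*M$ over $U$, the restriction $\omega|_U$ is written uniquely as $\sum_I \omega_I\, dx^I$ for certain $R$-valued functions $\omega_I$ on $U$. Under hypothesis (i), $\omega|_U$ is an abstract-definable $\mathcal{C}^p$ section of $\bigwedge^k T^*M$ over $U$, so Lemma \ref{7f} forces each coefficient $\omega_I$ to be abstract-definable $\mathcal{C}^p$. The implication (ii)$\Rightarrow$(iii) is immediate, since the representation asserted in (ii) holds for \emph{every} chart, so in particular for a chart at a prescribed point $x$.

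For (iii)$\Rightarrow$(i), where the only genuine verification lies, I would test abstract-definability on convenient charts by means of Lemma \ref{2f*}. Let $x\in M$ and, invoking (iii), pick a chart $(U,\phi)$ on $M$ at $x$ with $\omega|_U=\sum_I \omega_I\, dx^I$ and each $\omega_I$ abstract-definable $\mathcal{C}^p$. Let $\widetilde{\phi}\colon \bigwedge^k T^*U\to \phi(U)\times R^{\binom{m}{k}}$ be the chart on $\bigwedge^k T^*M$ induced by $\phi$, and note $\omega(x)\in \bigwedge^k T^*U$. By Lemma \ref{2f*} it suffices to check that $\widetilde{\phi}\circ\omega\circ\phi^{-1}$, restricted to $\phi(U\cap \omega^{-1}(\bigwedge^k T^*U))$, is definable and extends to a definable $\mathcal{C}^p$ map on a definable open subset of $R^m$. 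On that set this composition coincides with the restriction of the definable $\mathcal{C}^p$ map $(\text{id}_{\phi(U)},(\omega_I\circ\phi^{-1})_I)$; moreover the set $\phi(U\cap \omega^{-1}(\bigwedge^k T^*U))$ is definable thanks to the hypothesis $\pi\circ\omega=\text{id}_M$ together with Lemma \ref{7f}, exactly as in Theorem \ref{3f}.

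I expect no real obstacle: the argument is a direct transcription of the $1$-form case, and the single point that deserves a line of care is confirming that the domain $\phi(U\cap \omega^{-1}(\bigwedge^k T^*U))$ is definable, which is settled identically to the proof of Theorem \ref{3f}.
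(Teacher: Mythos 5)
Your proposal is correct and is exactly what the paper does: the paper states Theorem \ref{8f} with the remark that it is obtained ``by following the proof of Theorem \ref{3f}'', and your argument is precisely that transcription, with the multi-index $I$ in place of $i$, the fibre dimension $\binom{m}{k}$ in place of $m$, Lemma \ref{7f} in place of Lemma \ref{2f}, and the same use of Lemma \ref{2f*} plus the hypothesis $\pi\circ\omega=\text{id}_M$ to handle definability of $\phi\bigl(U\cap\omega^{-1}(\bigwedge^kT^*U)\bigr)$ in the implication (iii)$\Rightarrow$(i).
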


Let $\omega,\tau$ be maps $x\mapsto (x,\omega_x)\colon M\to \bigwedge^kT^*M$ and $x\mapsto (x,\tau_x)\colon M\to \bigwedge^l T^*M$, respectively. The \textit{wedge product} of $\omega$ and $\tau$ is the map $\omega\wedge \tau\colon M\to \bigwedge^{k+l} T^*M$ which associates to each $x\in M$ the element $\omega_x\wedge \tau_x \in \bigwedge^{k+l}T_x^*M$.

\begin{theorem}\label{9f}
If $\omega$ is an abstract-definable $\mathcal{C}^p$ $k$-form on $M$ and $\tau$ is an abstract-definable $\mathcal{C}^q$ $l$-form on $M$, then $\omega\wedge \tau$ is an abstract-definable $\mathcal{C}^r$ $(k+l)$-form on $M$ with $r\mathrel{\mathop :}= \min\{p,q\}$.
\end{theorem}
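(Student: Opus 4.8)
The plan is to verify the statement locally, in each coordinate chart, and then invoke the coordinate-frame characterization of Theorem \ref{8f}. First observe that, directly from the definition of the wedge product, the map $\omega\wedge\tau$ sends each $x\in M$ to $(x,\omega_x\wedge\tau_x)\in\bigwedge^{k+l}T_x^*M$, so it satisfies $\pi\circ(\omega\wedge\tau)=\mathrm{id}_M$ and is therefore eligible as an input to Theorem \ref{8f}. By the equivalence (i)$\Leftrightarrow$(ii) stated there, it then suffices to show that for \emph{every} chart $(U,\phi)$ on $M$ the local coefficients of $\omega\wedge\tau$ with respect to the coordinate frame $(dx^K)_K$ of $\bigwedge^{k+l}T^*M$ are abstract-definable $\mathcal{C}^r$ functions on $U$.

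To produce those coefficients, I would fix a chart $(U,\phi)$ and apply Theorem \ref{8f} to $\omega$ and to $\tau$ separately, writing $\omega|_U=\sum_I\omega_I\,dx^I$ and $\tau|_U=\sum_J\tau_J\,dx^J$, where the $\omega_I$ are abstract-definable $\mathcal{C}^p$ and the $\tau_J$ are abstract-definable $\mathcal{C}^q$ on $U$ (here $I$ ranges over increasing $k$-tuples and $J$ over increasing $l$-tuples). Evaluating fiberwise and using the bilinearity and associativity of the exterior product in each $\bigwedge^\bullet T_x^*M$, the restriction of $\omega\wedge\tau$ to $U$ equals $\sum_{I,J}\omega_I\tau_J\,(dx^I\wedge dx^J)$. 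Each $dx^I\wedge dx^J$ vanishes when $I$ and $J$ share an index and otherwise equals $\varepsilon_{I,J}\,dx^K$, where $K$ is the increasing reordering of the concatenation of $I$ and $J$ and $\varepsilon_{I,J}\in\{+1,-1\}$ is the sign of the corresponding shuffle permutation. Collecting terms, the coefficient of $dx^K$ is the finite sum $\sum_{I,J}\varepsilon_{I,J}\,\omega_I\tau_J$, taken over the disjoint pairs $(I,J)$ whose sorted union is $K$.

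It remains to check that each such coefficient is abstract-definable $\mathcal{C}^r$ with $r=\min\{p,q\}$. Each summand $\omega_I\tau_J$ is a product of an abstract-definable $\mathcal{C}^p$ function and an abstract-definable $\mathcal{C}^q$ function; reading this in the chart reduces it to a product of definable functions that are $\mathcal{C}^p$ and $\mathcal{C}^q$ respectively, and by the Leibniz rule such a product is definable and of class $\mathcal{C}^{\min\{p,q\}}=\mathcal{C}^r$, hence abstract-definable $\mathcal{C}^r$. A finite $\mathbb{Z}$-linear combination of abstract-definable $\mathcal{C}^r$ functions is again abstract-definable $\mathcal{C}^r$, so every coefficient of $dx^K$ is abstract-definable $\mathcal{C}^r$. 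By Theorem \ref{8f} we then conclude that $\omega\wedge\tau$ is an abstract-definable $\mathcal{C}^r$ $(k+l)$-form on $M$.

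I anticipate that the only genuinely delicate point is the regularity bookkeeping in the last paragraph, namely that $\min\{p,q\}$ (and not some larger loss of differentiability) is the correct class for the product and that abstract-definability is preserved under this product; the exterior-algebra manipulation in the middle paragraph, though notationally heavy, is entirely routine once fiberwise bilinearity is invoked.
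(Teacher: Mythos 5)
Your proof is correct. The paper gives no argument of its own here --- it simply cites Proposition 6.14 of \cite{rf2017} --- but your chart-local computation, which decomposes $\omega$ and $\tau$ through the coordinate-frame characterization (Theorem \ref{8f}, via Lemma \ref{7f}), reduces the coefficients of $\omega\wedge\tau$ to signed sums of products $\omega_I\tau_J$, and uses that a product of definable $\mathcal{C}^p$ and $\mathcal{C}^q$ functions is definable of class $\mathcal{C}^{\min\{p,q\}}$, is precisely the argument this machinery is set up to deliver and is essentially the cited one.
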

\begin{proof}
Proposition 6.14 (\cite{rf2017}, p. 65).
\end{proof}

\begin{corollary}\label{10f}
Let $f_i\colon M\to R$ be abstract-definable $\mathcal{C}^{p_i}$ functions with $p_i\geq 2$, $i=1,\ldots,k$. Then the wedge product of their differentials $df_1\wedge \cdots \wedge df_k$ is an abstract-definable $\mathcal{C}^p$ $k$-form on $M$, where $p$ is the least of $p_1-1,\ldots,p_k-1$. Moreover, for any chart $(U,\phi)$ on $M$ 
$$
df_1\wedge \cdots \wedge df_k= \sum\limits_I\frac{\partial (f_1,\ldots,f_k)}{\partial (x^{i_1},\ldots,x^{i_k})} dx^I\ \text{on}\ U,
$$
and  
$$
\frac{\partial (f_1,\ldots,f_k)}{\partial (x^{i_1},\ldots,x^{i_k})}(x)\mathrel{\mathop:}= \det
\begin{bmatrix}
\frac{\partial f_1}{\partial x^{i_1}}(x) &\cdots &\frac{\partial f_1}{\partial x^{i_k}}(x)\\
\vdots    &\ddots   &\vdots\\
\frac{\partial f_k}{\partial x^{i_1}}(x) &\cdots &\frac{\partial f_k}{\partial x^{i_k}}(x)
\end{bmatrix}.
$$
\end{corollary}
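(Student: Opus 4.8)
The plan is to treat the two assertions separately, since each reduces to results already in hand. For the regularity claim, I would first recall the observation made just before Lemma \ref{4f}: because each $f_i$ is abstract-definable $\mathcal{C}^{p_i}$ with $p_i\geq 2$, its differential $df_i$ is an abstract-definable $\mathcal{C}^{p_i-1}$ $1$-form on $M$. Then I would apply Theorem \ref{9f} inductively on $k$. Theorem \ref{9f} says the wedge of a $\mathcal{C}^a$-form and a $\mathcal{C}^b$-form is a $\mathcal{C}^{\min\{a,b\}}$-form, so forming the product one factor at a time shows that $df_1\wedge\cdots\wedge df_k$ is an abstract-definable $\mathcal{C}^p$ $k$-form with $p=\min\{p_1-1,\ldots,p_k-1\}$, exactly the claimed class.

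For the local formula I would fix a chart $(U,\phi)=(U,x^1,\ldots,x^m)$ and expand each differential in the coordinate frame, using the expression $df_j=\sum_{i=1}^m(\partial f_j/\partial x^i)\,dx^i$ recorded after Theorem \ref{3f}. Multilinearity of the wedge product then gives, on $U$,
$$
df_1\wedge\cdots\wedge df_k=\sum_{i_1,\ldots,i_k}\frac{\partial f_1}{\partial x^{i_1}}\cdots\frac{\partial f_k}{\partial x^{i_k}}\,dx^{i_1}\wedge\cdots\wedge dx^{i_k},
$$
where the sum ranges over all $k$-tuples $(i_1,\ldots,i_k)$ of indices in $\{1,\ldots,m\}$.

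The next step invokes the alternating property of $\wedge$: any summand with a repeated index vanishes, and each remaining summand can be reordered into a strictly increasing multi-index $I=(j_1<\cdots<j_k)$ at the cost of the sign of the reordering permutation. Collecting all terms that contribute to a fixed $dx^I$ yields the coefficient $\sum_{\sigma\in S_k}\mathrm{sgn}(\sigma)\prod_{a=1}^k(\partial f_a/\partial x^{j_{\sigma(a)}})$, which is precisely the Leibniz expansion of the determinant defining $\partial(f_1,\ldots,f_k)/\partial(x^{i_1},\ldots,x^{i_k})$. Substituting this identification produces the stated formula.

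I do not expect a genuine conceptual obstacle here: once the differentials are known to be abstract-definable forms of the correct class and the wedge product is known to be multilinear and alternating (Theorem \ref{9f} together with the construction of $\wedge$ preceding it), the computation is formally identical to the classical one, and the abstract-definability of the Jacobian coefficients follows from their being polynomial combinations of the abstract-definable $\mathcal{C}^{p}$ functions $\partial f_a/\partial x^{j_b}$. The only point requiring care is the antisymmetrization bookkeeping, namely checking that the permutation signs assemble correctly into the determinant; this is routine and carries over verbatim from the real case.
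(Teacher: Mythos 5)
Your proposal is correct and follows essentially the same route as the paper: the paper likewise obtains the differentiability claim immediately from Theorem \ref{9f} (applied factor by factor to the abstract-definable $\mathcal{C}^{p_i-1}$ $1$-forms $df_i$), and the coordinate formula it defers to the cited thesis is exactly the standard expansion you carry out — writing each $df_j=\sum_i(\partial f_j/\partial x^i)\,dx^i$ in the chart, using multilinearity and the alternating property of $\wedge$, and assembling the signs into the Leibniz form of the Jacobian determinant. Your added remark that the resulting coefficients are abstract-definable $\mathcal{C}^p$ (being polynomial combinations of the partials) is the right justification, via Lemma \ref{7f}, that the local expression is indeed an abstract-definable $\mathcal{C}^p$ $k$-form.
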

\begin{proof}
The first part of the corollary follows immediately from Theorem \ref{9f}. See Corollary 6.15 (\cite{rf2017}, p. 66) for the complete proof.
\end{proof}

If $(U,\phi)=(U,x^1,\ldots,x^m)$ and $(V,\psi)=(V,y^1,\ldots,y^m)$ are two overlapping charts on $M$ and $p\geq 2$ (recall that $M$ is an abstract-definable $\mathcal{C}^p$ manifold), then 
$$
dy^{j_1}\wedge\cdots\wedge dy^{j_k}=\sum\limits_{1\leq i_1<\ldots<i_k\leq m}\frac{\partial (y^{j_1}, \ldots,y^{j_k})}{\partial (x^{i_1},\ldots,x^{i_k})} dx^{i_1}\wedge\cdots\wedge dx^{i_k},
$$
in view of Corollary \ref{10f}.

Consider an abstract-definable $\mathcal{C}^p$ map $F\colon N\to M$ and a map $\omega\colon M\to \bigwedge^kT^*M$ given by $x\mapsto (x,\omega_x)$. The \textit{pullback of $\omega$ by $F$} is the map $F^*\omega\colon N\to \bigwedge^k T^*N$ defined as $x\mapsto (F^*\omega)_x$,
where $(F^*\omega)_x\colon T_xN\times \cdots \times T_xN\to R$ is the following $k$-linear function 
$$
(F^*\omega)_x (v_1,\ldots,v_k)\mathrel{\mathop:}= \omega_{F(x)}(d_x F v_1,\ldots,d_x F v_k).
$$ 
The set of all such maps $\omega$, equipped with pointwise operations, forms an $R$-vector space and a $\mathcal{F}(M)$-module, where $\mathcal{F}(M)$ denotes the ring of all $R$-valued functions on $M$.

The chain rule and the definition of pullback yield the following.

\begin{lemma}\label{11f}
Let $F\colon N\to M$ be an abstract-definable $\mathcal{C}^p$ map, $g\colon M\to R$ an abstract-definable $\mathcal{C}^p$ function, and let $\omega,\tau$ be the maps $x\mapsto (x,\omega_x)\colon M\to \bigwedge^kT^*M$ and $x\mapsto(x,\tau_x)\colon M\to \bigwedge^lT^*M$, respectively. Then, 
\begin{enumerate}
\item[(i)] $F^*(\omega\wedge \tau)=(F^*\omega)\wedge (F^*\tau)$.
\end{enumerate} 
In the case of $k=l$, 
\begin{enumerate}
\item[(ii)] $F^*(\omega+\tau)= F^*\omega+ F^*\tau$;
\item[(iii)] $F^*(g\omega)=  (F^*g)(F^*\omega)$. Particularly, when $g\equiv a\in R$ is a constant function on $M$, $F^*(a\omega)=a(F^*\omega)$.
\end{enumerate}
\end{lemma}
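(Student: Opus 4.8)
The plan is to establish all three identities pointwise, reducing each to an equality of alternating multilinear functions on a single tangent space. Fix a point $x\in N$ and abbreviate the differential $d_x F\colon T_x N\to T_{F(x)}M$ by $L$, a fixed $R$-linear map. Since, at $x$, both sides of each asserted identity are elements of the relevant exterior power $\bigwedge^\bullet T_x^* N$, it suffices to verify that they agree when evaluated on an arbitrary tuple of tangent vectors $v_1,v_2,\ldots\in T_x N$. Throughout I write $w_i:=Lv_i\in T_{F(x)}M$.

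Parts (ii) and (iii) are then immediate from the definition of the pullback together with the (multi)linearity of the evaluation map. For (ii), when $k=l$ one has
$$(F^*(\omega+\tau))_x(v_1,\ldots,v_k)=(\omega+\tau)_{F(x)}(w_1,\ldots,w_k)=\omega_{F(x)}(w_1,\ldots,w_k)+\tau_{F(x)}(w_1,\ldots,w_k),$$
which is exactly $(F^*\omega)_x(v_1,\ldots,v_k)+(F^*\tau)_x(v_1,\ldots,v_k)$. For (iii), the scalar $g(F(x))$ factors out of the multilinear evaluation, and since $(F^*g)(x)=g(F(x))$ by the definition of the pullback of a function, we get $(F^*(g\omega))_x=(F^*g)(x)\,(F^*\omega)_x$; the constant case $g\equiv a$ follows at once because $F^*a=a\circ F=a$.

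The only substantive point is (i), which amounts to the fact that pullback along the single linear map $L$ is a homomorphism of exterior algebras. I would isolate this as a pointwise statement: setting $\alpha:=\omega_{F(x)}\in\bigwedge^k T_{F(x)}^*M$ and $\beta:=\tau_{F(x)}\in\bigwedge^l T_{F(x)}^*M$, and letting $L^*$ denote the operation $(L^*\eta)(u_1,\ldots)=\eta(Lu_1,\ldots)$ on alternating forms, the claim becomes $L^*(\alpha\wedge\beta)=(L^*\alpha)\wedge(L^*\beta)$. By bilinearity of the wedge and the fact that every alternating form is a sum of decomposable ones, it is enough to treat $\alpha=\alpha^1\wedge\cdots\wedge\alpha^k$ and $\beta=\beta^1\wedge\cdots\wedge\beta^l$ with $\alpha^i,\beta^j\in T_{F(x)}^*M$ covectors; for a single covector $\gamma$ we have $L^*\gamma=\gamma\circ L$, so expanding $(\alpha\wedge\beta)(w_1,\ldots,w_{k+l})$ by the shuffle formula and collecting terms yields $((L^*\alpha)\wedge(L^*\beta))(v_1,\ldots,v_{k+l})$ directly.

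The main obstacle is purely bookkeeping: keeping track of the permutation signs in the shuffle expansion used in (i). This has no o-minimal content, since at a fixed point the differential $L=d_xF$ is an ordinary linear map between finite-dimensional $R$-vector spaces; the chain rule enters only in guaranteeing that $L$ is well defined independently of the chosen charts, which has already been established in Section 2. Consequently no definability argument is required here, and the verification proceeds exactly as in the classical smooth case.
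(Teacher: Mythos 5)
Your proof is correct and takes essentially the same route as the paper, which gives no written argument at all and simply asserts that the lemma follows from ``the chain rule and the definition of pullback'': your pointwise verification, reducing (i) to the standard multilinear-algebra fact that pullback along the fixed linear map $L=d_xF$ commutes with the wedge product on $\bigwedge^{\bullet}T_{F(x)}^{*}M$, is exactly the elaboration the paper leaves implicit. You are also right that no definability enters, since the statement concerns arbitrary (not necessarily abstract-definable) maps into the exterior powers and is checked fiberwise.
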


\begin{theorem}\label{12f}
The pullback $F^*\omega$ of an abstract-definable $\mathcal{C}^p$ $k$-form $\omega$ on $M$ under an abstract-definable $\mathcal{C}^p$ map $F\colon N\to M$ with $p\geq 2$ is an abstract-definable $\mathcal{C}^{p-1}$ $k$-form on $N$.
\end{theorem}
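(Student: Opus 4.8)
The plan is to reduce everything to the local description of $k$-forms furnished by Theorem \ref{8f}, following verbatim the pattern of the $1$-form case (Theorem \ref{5f}). First I would observe that $F^*\omega$ automatically satisfies $\pi\circ F^*\omega=\mathrm{id}_N$: for each $x\in N$ the value $(F^*\omega)_x$, being $\omega_{F(x)}$ precomposed with $d_xF$ in each slot, is an alternating $k$-linear function on $T_xN$, hence an element of $\bigwedge^kT_x^*N$. It therefore remains only to verify the smoothness condition (iii) of Theorem \ref{8f} (with $p$ replaced by $p-1$) at an arbitrary point of $N$.

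Fix $x\in N$ and set $q\mathrel{\mathop:}=F(x)$. I would choose a chart $(U,\phi)=(U,x^1,\ldots,x^m)$ on $M$ at $q$; since $F$ is continuous and the abstract-definable open sets form a basis for the manifold topology, $F^{-1}(U)$ is abstract-definable open, so I may pick a chart $(V,\psi)=(V,y^1,\ldots,y^n)$ on $N$ at $x$ with $F(V)\subseteq U$. By Theorem \ref{8f} the restriction $\omega|_U$ equals $\sum_I\omega_I\,dx^I$ with each $\omega_I\colon U\to R$ abstract-definable $\mathcal{C}^p$. Applying Lemma \ref{11f}(i),(iii) and Lemma \ref{4f}(i) over $V$ (all compositions being defined because $F(V)\subseteq U$) yields
$$
F^*\omega|_V=\sum_I (\omega_I\circ F)\, d(x^{i_1}\circ F)\wedge\cdots\wedge d(x^{i_k}\circ F).
$$

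Now each $x^{i_j}\circ F$ is an abstract-definable $\mathcal{C}^p$ function on $V$, being the composition of the abstract-definable $\mathcal{C}^p$ map $F|_V\colon V\to U$ with the coordinate function $x^{i_j}$. Since $p\geq 2$, Corollary \ref{10f} shows that each wedge $d(x^{i_1}\circ F)\wedge\cdots\wedge d(x^{i_k}\circ F)$ is an abstract-definable $\mathcal{C}^{p-1}$ $k$-form on $V$ whose expansion in the coordinate frame $(dy^J)_J$ has as coefficients the Jacobian minors $\partial(x^{i_1}\circ F,\ldots,x^{i_k}\circ F)/\partial(y^{j_1},\ldots,y^{j_k})$, all abstract-definable $\mathcal{C}^{p-1}$. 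Multiplying by the abstract-definable $\mathcal{C}^p$ (hence $\mathcal{C}^{p-1}$) functions $\omega_I\circ F$ and summing over $I$---operations that preserve abstract-definable $\mathcal{C}^{p-1}$ coefficients by Lemma \ref{3vb}---gives $F^*\omega|_V=\sum_J\sigma_J\,dy^J$ with every $\sigma_J\colon V\to R$ abstract-definable $\mathcal{C}^{p-1}$. This is exactly condition (iii) of Theorem \ref{8f} for $F^*\omega$ at $x$, whence $F^*\omega$ is an abstract-definable $\mathcal{C}^{p-1}$ $k$-form on $N$.

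The computation itself is routine once the setup is fixed; the step I expect to require the most care---and the main (though modest) obstacle---is the legitimacy of the local reduction. One must genuinely use the continuity of $F$ to shrink a chart domain at $x$ so that $F(V)\subseteq U$, and then justify that Lemmas \ref{11f} and \ref{4f}, as stated for globally defined objects, apply verbatim to the restrictions over the open submanifolds $U$ and $V$. Once this bookkeeping is granted, the entire loss of one differentiability class is seen to originate from the single differentiation in $F^*dx^{i_j}=d(x^{i_j}\circ F)$, precisely as in the $1$-form case.
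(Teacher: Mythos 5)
Your proof is correct and is essentially the argument the paper intends: the paper's own ``proof'' is just a citation to Proposition 6.19 of \cite{rf2017}, and the standard argument there (following \cite{Tu2010}, which the paper declares it tracks closely) is exactly your local computation---write $\omega|_U=\sum_I\omega_I\,dx^I$, pull back via Lemmas \ref{11f} and \ref{4f} to get $\sum_I(\omega_I\circ F)\,d(x^{i_1}\circ F)\wedge\cdots\wedge d(x^{i_k}\circ F)$, invoke Corollary \ref{10f} for the Jacobian-minor coefficients, and conclude with the characterization of Theorem \ref{8f}. The bookkeeping points you flag (shrinking the chart at $x$ so that $F(V)\subseteq U$, and applying the pullback lemmas to restrictions over abstract-definable open subsets) are legitimate in this framework and introduce no gap.
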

\begin{proof}
Proposition 6.19 (\cite{rf2017}, p. 69).
\end{proof}

\section{Exterior derivative}

As we have seen in the latter section, the classes of differentiability  $\mathcal{C}^p$ for $1\leq p<\infty$ are not closed under differentiation. In walking the path towards a de Rham-like cohomology theory for o-minimal manifolds, there was no need up to now of this closure condition, and therefore we were allowed to work within the general setting of an o-minimal expansion of a real closed field.
However, since we aim to construct cochain complexes whose objects are sets of abstract-definable forms by following the lines of the classical de Rham cohomology theory, we must establish exterior derivative, and this requires such a closure condition on differentiability. So, we turn our attention to abstract-definable $\mathcal{C}^\infty$ manifolds, and by virtue of \cite{legalrolin2009} we need to restrict ourselves to an o-minimal expansion of the real field which possesses $\mathcal{C}^\infty$ cell decomposition. Recall that in building up abstract-definable $\mathcal{C}^p$ partitions of unity we made heavy use of results, specifically Theorem \ref{2pu} by A. Thamrongthanyalak and Lemma \ref{3pu}, in the definable context that ``a priori'' only hold for $p<\infty$. Howbeit, if in addition to admitting a smooth cell decomposition the o-minimal expansion of the real field defines the exponential function, then we have in hand analogous results (Theorem 1.1, \cite{fischer2008}, p. 497) and (Corollary 1.2, \cite{fischer2008}, p. 497), respectively. Therefore, proceeding just like in Theorem \ref{4pu} (which by the way, it was fully inspired by Lemma 4.6, \cite{fischer2008}) where Lemma \ref{3pu} is replaced with Corollary 1.2 (\cite{fischer2008}), we obtain abstract-definable $\mathcal{C}^\infty$ partitions of unity\index{abstract-definable! @$\mathcal{C}^\infty$ partition of unity}, and consequently abstract-definable $\mathcal{C}^\infty$ bump functions as in Corollary \ref{8pu}.
\\

\textit{In view of this, we fix from now on an o-minimal expansion $\mathcal{R}$ of the real field $\mathbb{R}$ that admits smooth cell decomposition and defines the exponential function.  By ``definable'' we mean ``definable in $\mathcal{R}$ with parameters in $\mathbb{R}$''}.
\\

In addition to \textit{all we developed so far for abstract-definable $\mathcal{C}^p$ manifolds with $1\leq p<\infty$ holding for the case $p=\infty$}, we have clear improvements like the pullback $F^*\omega$ of an abstract-definable $\mathcal{C}^\infty$ $k$-form $\omega$ on an abstract-definable $\mathcal{C}^\infty$ manifold $M$ under an abstract-definable $\mathcal{C}^\infty$ map $F\colon N\to M$ is an abstract-definable $\mathcal{C}^\infty$ $k$-form on the abstract-definable $\mathcal{C}^\infty$ manifold $N$, that is, there is no decreasing in the differentiability class of $F^*\omega$.
\\

\textsc{Notation}. For the remainder of the text, unless otherwise stated, $(M,\mathcal{A})$ and $(N,\mathcal{B})$ denote abstract-definable $\mathcal{C}^\infty$ manifolds of dimensions $m$ and $n$, respectively. 
\\

For each $k$, let $\Omega^k(M)$ denote the set of all abstract-definable $\mathcal{C}^\infty$ $k$-forms\index{abstract-definable! @$\mathcal{C}^\infty$ $k$-forms}. This set equipped with the pointwise sum and scalar multiplication of maps forms an $\mathbb{R}$-vector space. Take $\Omega^*(M)$ to be the $\mathbb{R}$-vector space given by the the direct sum
$$
\Omega^*(M)\mathrel{\mathop:}= \bigoplus\limits_{j=0}^m \Omega^k(M).
$$
With the wedge product, the vector space $\Omega^*(M)$ becomes an anticommutative graded algebra, where the grading is the degree of the abstract-definable $\mathcal{C}^k$ forms on $M$. Also, from Lemma \ref{11f}, if $F\colon M\to N$ is an abstract-definable $\mathcal{C}^\infty$ map then the \textit{pullback map} $F^*\colon \Omega^*(N)\to \Omega^*(M)$ is a homomorphism of graded algebras.

An \textit{exterior derivative} on $M$ is an $\mathbb{R}$-linear map $D\colon \Omega^*(M)\to \Omega^*(M)$ satisfying the conditions: \begin{enumerate}
\item[(i)] $D$ is an antiderivation of degree 1, that is, $D\omega\in \Omega^{k+1}(M)$ and $D(\omega\wedge \tau)=D\omega\wedge \tau+(-1)^k\omega\wedge D\tau$, for $\omega\in \Omega^k(M)$, $\tau\in \Omega^l(M)$;
\item[(ii)] $D\circ D=0$;
\item[(iii)] for any abstract-definable $\mathcal{C}^p$ function $f\colon M\to R$, $Df$ equals the differential $df$ of $f$.
\end{enumerate}  

An $\mathbb{R}$-linear operator $D\colon \Omega^*(M)\to \Omega^*(M)$ is called \textit{local} if has the property that for all $k\geq 0$, if $\omega$ is an abstract-definable $\mathcal{C}^\infty$ $k$-form on $M$ in such a way that $\omega|_U=0$ for some abstract-definable open subset $U$ of $M$, then $D\omega=0$ on $U$; or equivalently, for all $k\geq 0$ and for every two abstract-definable $\mathcal{C}^\infty$ $k$-forms $\omega,\tau\in \Omega^k(M)$ agreeing on an abstract-definable open subset $U\subseteq M$, we have $D\omega=D\tau$ on $U$.

\begin{theorem}\label{1ed}
Every antiderivation $D$ on $\Omega^*(M)$ is a local operator.
\end{theorem}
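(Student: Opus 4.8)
The plan is to exploit the graded Leibniz rule satisfied by an antiderivation together with the abstract-definable $\mathcal{C}^\infty$ bump functions furnished by Corollary \ref{8pu}. Since $D$ is $\mathbb{R}$-linear, the two formulations of locality given in the definition are equivalent (apply the first to $\omega-\tau$), so it suffices to prove that whenever $\omega\in \Omega^k(M)$ satisfies $\omega|_U=0$ on an abstract-definable open set $U\subseteq M$, one has $(D\omega)_x=0$ for every $x\in U$.

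Fix such a point $x\in U$. First I would invoke Corollary \ref{8pu} --- now available in the $\mathcal{C}^\infty$ category thanks to the standing hypotheses on $\mathcal{R}$ --- to produce an abstract-definable $\mathcal{C}^\infty$ bump function $\rho\colon M\to R$ with $\text{supp}(\rho)\subseteq U$ and $\rho\equiv 1$ on some abstract-definable open neighborhood $V\subseteq U$ of $x$ (the construction in that corollary is centered at a prescribed point, so we may arrange $x\in V$). The key observation is then that the $k$-form $\rho\omega=\rho\wedge\omega$ vanishes identically on $M$: on $U$ it vanishes because $\omega|_U=0$, and on $M\setminus U$ it vanishes because $\rho$ is supported in $U$. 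Hence $\rho\omega$ is the zero element of $\Omega^k(M)$, and therefore $D(\rho\omega)=0$.

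Now apply the antiderivation property (i) with $\rho$ regarded as a $0$-form. Since $\deg\rho=0$ the sign $(-1)^0=1$, and we obtain
$$
0=D(\rho\omega)=D\rho\wedge\omega+\rho\, D\omega.
$$
Evaluating this identity of $(k+1)$-forms at the point $x$, and using $\omega_x=0$ (as $x\in U$) together with $\rho(x)=1$ (as $x\in V$), the first summand dies because it carries the factor $\omega_x$, while the second reduces to $(D\omega)_x$. Thus $(D\omega)_x=0$, and since $x\in U$ was arbitrary we conclude $D\omega=0$ on $U$, as required.

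The only genuinely non-formal ingredient is the existence of the abstract-definable $\mathcal{C}^\infty$ bump function $\rho$ with the prescribed support and normalization; the rest is the pointwise evaluation of the Leibniz identity. Consequently the main point to verify carefully is that Corollary \ref{8pu} does yield such a $\rho$ in the smooth abstract-definable category, which is precisely where the passage to an o-minimal expansion of $\mathbb{R}$ that admits smooth cell decomposition and defines the exponential --- guaranteeing abstract-definable $\mathcal{C}^\infty$ partitions of unity and hence bump functions --- enters the argument. One should also confirm that $\rho\omega$ is a bona fide element of $\Omega^k(M)$ so that $D$ may legitimately be applied to it, but this is immediate since $\rho\omega$ is the zero form.
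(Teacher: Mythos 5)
Your proof is correct and is essentially the argument the paper relies on: the paper defers this proof to Proposition 7.6 of the second author's thesis \cite{rf2017} and states that Sections 4--7 follow \cite{Tu2010} closely, where exactly this bump-function/Leibniz argument is given (multiply by a bump function $\rho$ equal to $1$ near the point and supported in $U$, observe $\rho\omega\equiv 0$, apply the graded Leibniz rule, and evaluate at the point). You also correctly isolated the one non-formal ingredient, namely that the smooth abstract-definable bump function at a prescribed point is available here via the smooth version of Corollary \ref{8pu}, which the standing hypotheses on $\mathcal{R}$ (smooth cell decomposition and definability of $\exp$) guarantee, as discussed at the start of Section 6.
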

\begin{proof}
Proposition 7.6 (\cite{rf2017}, p. 74).
\end{proof}

\begin{lemma}\label{2ed}
Let $(U,x^1,\ldots,x^m)$ be a chart on $M$. There exists a unique exterior derivative $d_U$ on $U$.
\end{lemma}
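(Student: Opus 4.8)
The plan is to establish existence and uniqueness of an exterior derivative on a single chart $(U,x^1,\ldots,x^m)$ by reducing everything to the coordinate description of forms provided by Theorem \ref{8f}. Since $U$ carries global coordinates, every abstract-definable $\mathcal{C}^\infty$ $k$-form on $U$ is written uniquely as $\omega=\sum_I \omega_I\, dx^I$ with abstract-definable $\mathcal{C}^\infty$ coefficients $\omega_I\colon U\to R$. The strategy is first to derive a forced formula for $d_U$ from the axioms (this yields uniqueness), and then to verify that the formula so obtained genuinely satisfies the three axioms (this yields existence).

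For uniqueness, I would argue as follows. Suppose $d_U$ is any exterior derivative on $U$. By axiom (iii) applied to each coordinate function $x^i$ (which is abstract-definable $\mathcal{C}^\infty$ on $U$), we must have $d_U x^i = dx^i$, the differential of $x^i$. Again by (iii), for a $0$-form, i.e. an abstract-definable $\mathcal{C}^\infty$ function $f$, we have $d_U f = df = \sum_i (\partial f/\partial x^i)\,dx^i$, which is the established coordinate expression for the differential. For a general $k$-form $\omega=\sum_I \omega_I\,dx^I$, linearity together with the antiderivation property (i) forces
$$
d_U\omega = \sum_I d_U\bigl(\omega_I\,dx^{i_1}\wedge\cdots\wedge dx^{i_k}\bigr)
= \sum_I d_U\omega_I \wedge dx^{i_1}\wedge\cdots\wedge dx^{i_k},
$$
where the terms involving $d_U(dx^{i_j})$ all vanish: by axiom (ii), $d_U(dx^{i_j}) = d_U(d_U x^{i_j}) = 0$, and repeated use of the Leibniz rule (i) on the wedge $dx^{i_1}\wedge\cdots\wedge dx^{i_k}$ leaves only the term in which $d_U$ lands on the coefficient. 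Hence $d_U\omega=\sum_I \sum_j (\partial\omega_I/\partial x^j)\,dx^j\wedge dx^I$ is completely determined by the axioms, proving uniqueness.

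For existence, I would \emph{define} $d_U$ by exactly this formula, $d_U\omega \mathrel{\mathop:}= \sum_I \sum_{j} (\partial\omega_I/\partial x^j)\,dx^j\wedge dx^I$, and then check the three axioms directly. That $d_U\omega$ is again an abstract-definable $\mathcal{C}^\infty$ $(k+1)$-form on $U$ follows from Theorem \ref{8f} (or Lemma \ref{7f}) together with the fact, secured by the standing hypothesis of smooth cell decomposition, that partial derivatives of abstract-definable $\mathcal{C}^\infty$ functions are again abstract-definable $\mathcal{C}^\infty$; $\mathbb{R}$-linearity is immediate from linearity of partial differentiation. Axiom (iii) holds by construction for $0$-forms. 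The antiderivation identity in (i) reduces, after expanding both sides on monomials $f\,dx^I$ and $g\,dx^J$, to the Leibniz rule $\partial(fg)/\partial x^j = (\partial f/\partial x^j)g + f(\partial g/\partial x^j)$ and the sign bookkeeping coming from moving $dx^j$ past the $k$-fold wedge $dx^I$, which contributes the factor $(-1)^k$. Axiom (ii), $d_U\circ d_U=0$, reduces to the symmetry of second partial derivatives, $\partial^2\omega_I/\partial x^j\partial x^l = \partial^2\omega_I/\partial x^l\partial x^j$, combined with the antisymmetry $dx^j\wedge dx^l = -\,dx^l\wedge dx^j$, so the double sum cancels in pairs.

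The main obstacle I anticipate is not any single computation but ensuring that each step is legitimate in the abstract-definable $\mathcal{C}^\infty$ category rather than merely the classical one. Concretely, I must confirm that the partial derivatives $\partial\omega_I/\partial x^j$ are themselves abstract-definable $\mathcal{C}^\infty$ functions on $U$ (so that $d_U\omega$ lands in $\Omega^*(U)$), that the equality of mixed partials and the Leibniz rule used above are available for definable $\mathcal{C}^\infty$ functions in the fixed o-minimal structure $\mathcal{R}$, and that all wedge-product manipulations respect abstract-definability as guaranteed by Theorem \ref{9f} and Theorem \ref{8f}. These are exactly the points where the restriction to an o-minimal expansion of the real field admitting smooth cell decomposition (and defining $\exp$) is used, so the care lies in invoking the tame calculus established in the earlier sections rather than tacitly borrowing classical smooth-manifold facts.
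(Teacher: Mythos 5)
Your proposal is correct and follows essentially the same route as the paper: the paper likewise defines $d_U$ on $\Omega^k(U)$ by the coordinate formula $d_U(\sum_I \omega_I\,dx^I) \mathrel{\mathop:}= \sum_I(\sum_j \partial\omega_I/\partial x^j\,dx^j)\wedge dx^I$, asserts the axioms (i)--(iii) hold, and notes that these axioms force uniqueness. You have simply filled in the verification and the forced-formula argument that the paper leaves as ``not hard to see,'' including the tameness point (abstract-definability of the partial derivatives via Lemma \ref{7f}) that justifies $d_U\omega\in\Omega^{k+1}(U)$.
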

\begin{proof}
For any $\tau\mathrel{\mathop:}=\sum_I \tau_Idx^I\in \Omega^k(U)$, with $\tau_I$ an abstract-definable $\mathcal{C}^\infty$ function on $U$, define a map $d_U\colon \Omega^*(U)\to \Omega^*(U)$ in such a way that its restriction to each $\Omega^k(U)$ is given by
$$
d_U(\tau)\mathrel{\mathop:}= \sum\limits_I(\sum\limits_j\frac{\partial \tau_I}{\partial x^j}dx^j)\wedge dx^I.
$$
It is not hard to see that $d_U$ is an exterior derivative on $U$. Moreover, the properties (i)-(iii) imply the uniqueness of $d_U$.
\end{proof}

\begin{lemma}\label{2ed*}
Let $(U,\phi)$ be a chart on $M$, and $\omega\in \Omega^k(U)$. There is $\widetilde{\omega}\in \Omega^k(M)$ and an abstract-definable open set $V\subseteq U$ such that $\widetilde{\omega}=\omega$ on $V$.
\end{lemma}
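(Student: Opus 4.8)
The plan is to mimic the classical extension-by-bump-function argument, now using the abstract-definable $\mathcal{C}^\infty$ bump function produced in Corollary \ref{8pu} together with the pointwise characterization of abstract-definable $\mathcal{C}^\infty$ $k$-forms in Theorem \ref{8f}. First I would invoke Corollary \ref{8pu} to obtain an abstract-definable $\mathcal{C}^\infty$ nonnegative bump function $\rho\colon M\to R$ with $\text{supp}(\rho)\subseteq U$ and $\rho|_V=1$ for some abstract-definable open set $V\subseteq U$. This $V$ is precisely the set promised in the statement.

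Next I would define the candidate global form $\widetilde{\omega}\colon M\to \bigwedge^k T^*M$ by setting $\widetilde{\omega}\mathrel{\mathop:}= \rho\,\omega$ on $U$ and $\widetilde{\omega}\mathrel{\mathop:}= 0$ on $M\setminus \text{supp}(\rho)$. Since $\text{supp}(\rho)\subseteq U$, the two abstract-definable open sets $U$ and $M\setminus\text{supp}(\rho)$ cover $M$; and on their overlap $U\setminus\text{supp}(\rho)$ one has $\rho\equiv 0$, whence $\rho\,\omega=0$ there, so that the two prescriptions agree. Thus $\widetilde{\omega}$ is a well-defined section satisfying $\pi\circ\widetilde{\omega}=\text{id}_M$. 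On $U$, Lemma \ref{3vb} guarantees that $\rho\,\omega$ is an abstract-definable $\mathcal{C}^\infty$ $k$-form; writing $\omega=\sum_I\omega_I\,dx^I$ in the chart $(U,\phi)$ gives $\widetilde{\omega}=\sum_I(\rho\,\omega_I)\,dx^I$, with each coefficient $\rho\,\omega_I$ abstract-definable $\mathcal{C}^\infty$ on $U$.

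I would then verify that $\widetilde{\omega}\in\Omega^k(M)$ through criterion (iii) of Theorem \ref{8f}: at any point $x\in U$ the chart $(U,\phi)$ exhibits $\widetilde{\omega}|_U$ with abstract-definable $\mathcal{C}^\infty$ coefficients $\rho\,\omega_I$, while at any point $x\in M\setminus\text{supp}(\rho)$ any chart contained in this open set exhibits $\widetilde{\omega}$ as the zero form, whose coefficients are trivially abstract-definable $\mathcal{C}^\infty$. As these two families of points exhaust $M$, criterion (iii) is met and $\widetilde{\omega}\in\Omega^k(M)$. Finally, on $V$ we have $\rho\equiv 1$, so $\widetilde{\omega}=\rho\,\omega=\omega$, which is exactly the required agreement.

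The only point demanding care is the consistency of the piecewise definition together with the global $\mathcal{C}^\infty$-regularity of $\widetilde{\omega}$; but both reduce to the support condition $\text{supp}(\rho)\subseteq U$ and to the local nature of the characterization in Theorem \ref{8f}, so no genuinely new difficulty arises. The substantive work---producing a bump function that is identically $1$ on an open set and supported in $U$---has already been carried out in Corollary \ref{8pu}, and hence the present lemma is essentially a formal consequence of the partition-of-unity machinery of Section 3.
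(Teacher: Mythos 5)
Your proposal is correct, and it shares the paper's core idea---the bump function $\rho$ of Corollary \ref{8pu} supported in $U$ and identically $1$ on $V$, followed by multiplication and extension by zero---but the mechanism by which globality is verified is genuinely different. The paper never defines a section piecewise: it extends the scalar coefficients $\omega_I$ to global functions $\widetilde{\omega}_I$ (equal to $\rho\,\omega_I$ on $U$ and $0$ off $U$) \emph{and}, crucially, also extends the coordinate functions $x^i|_V$ to global abstract-definable $\mathcal{C}^\infty$ functions $\widetilde{x}^i$, then sets $\widetilde{\omega}\mathrel{\mathop:}=\sum_I\widetilde{\omega}_I\,d\widetilde{x}^I$; membership in $\Omega^k(M)$ is then automatic from Corollary \ref{10f} together with the module structure of $\Omega^k(M)$ over the abstract-definable $\mathcal{C}^\infty$ functions, and the only piecewise objects that ever occur are scalar functions, whose regularity follows the pattern already established in Lemma \ref{4,1pu}. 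You instead define the section $\widetilde{\omega}$ itself piecewise ($\rho\,\omega$ on $U$, zero off $\mathrm{supp}(\rho)$) and then certify $\widetilde{\omega}\in\Omega^k(M)$ through criterion (iii) of Theorem \ref{8f}; this is sound, but it rests on one detail you pass over quickly: a point $x\in M\setminus\mathrm{supp}(\rho)$ need not lie in any \emph{atlas} chart contained in that set, so you must restrict an atlas chart $(U_i,\phi_i)$ at $x$ to $U_i\cap(M\setminus\mathrm{supp}(\rho))$ and note that such restrictions to abstract-definable open sets yield a $\sim$-equivalent atlas (exactly the refinement device used in the proof of Corollary \ref{5pu}), so that criterion (iii) may legitimately be applied to the restricted chart, on which $\widetilde{\omega}$ vanishes identically. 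With that observation supplied, your argument is complete; it buys a more direct construction that avoids introducing the auxiliary functions $\widetilde{x}^i$, and it even makes the final agreement on $V$ immediate ($\rho\equiv 1$ there), whereas the paper's route buys a form that is globally defined by a single formula, so that no regularity check on a piecewise-defined section is ever needed.
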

\begin{proof}
Write $\omega$ as $\sum_I\omega_Idx^I$, where $\omega_I$ are  abstract-definable $\mathcal{C}^\infty$ functions on $U$. Consider an abstract-definable $\mathcal{C}^\infty$ bump function $\rho\colon M\to \mathbb{R}$ supported in $U$, and let $V\subseteq U$ be the abstract-definable open set on which $\rho$ is identically $1$. Then, defining $\widetilde{\omega}_I\colon M\to \mathbb{R}$ as $\rho\cdot \omega_I$ on $U$, and $0$ on $M\setminus U$, it follows that $\omega_I$ is an abstract-definable $\mathcal{C}^\infty$ function, and on $V$ both of functions $\widetilde{\omega}_I$ and $\omega_I$ coincide. Similarly, we obtain abstract-definable $\mathcal{C}^\infty$ functions $\widetilde{x}^i\colon M\to \mathbb{R}$ extending $x^i|_V$. Now, set $
\widetilde{\omega}\mathrel{\mathop:}= \sum_I\widetilde{\omega}_Id\widetilde{x}^I$. By Corollary \ref{10f} and the fact that $\Omega^k(M)$ is a module over the ring of all abstract-definable $\mathcal{C}^\infty$ functions on $M$, $\widetilde{\omega}\in \Omega^k(M)$. Finally, note that $\widetilde{\omega}|_V=\sum_I{\widetilde{\omega}_I|}_V {d\widetilde{x}^{i_1}|}_V\wedge \cdots \wedge {d\widetilde{x}^{i_k}|}_V$.
\end{proof}

\begin{theorem}\label{3ed}
There exists an exterior derivative $d\colon \Omega^*(M)\to \Omega^*(M)$ which is uniquely determined by the conditions (i)-(iii) above.
\end{theorem}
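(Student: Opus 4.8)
The plan is to build $d$ by gluing the local exterior derivatives $d_U$ furnished by Lemma \ref{2ed}, using the locality of antiderivations (Theorem \ref{1ed}) to guarantee that these local pieces are compatible on overlaps, and then to read off all the required properties chart-by-chart.

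First I would record the consistency statement that does the real work: for any two charts $(U,\phi)$ and $(V,\psi)$ on $M$ and any $\omega\in\Omega^k(M)$, the forms $d_U(\omega|_U)$ and $d_V(\omega|_V)$ agree on $U\cap V$. To prove it, fix $x\in U\cap V$ and choose an abstract-definable open set $\Omega$ with $x\in\Omega\subseteq U\cap V$; since abstract-definable open sets form a basis for the manifold topology and the restriction of $\phi$ is again a chart, $(\Omega,\phi|_\Omega)$ carries its own unique exterior derivative $d_\Omega$ (Lemma \ref{2ed}). The key point is that $d_U$ restricts to an exterior derivative on $\Omega$: given $\sigma\in\Omega^*(\Omega)$ and a point $y\in\Omega$, Lemma \ref{2ed*} (with the bump function of Corollary \ref{8pu} chosen to equal $1$ near $y$) produces a global form $\widetilde\sigma\in\Omega^*(M)$ agreeing with $\sigma$ on a neighbourhood of $y$, so that $\widetilde\sigma|_U$ extends $\sigma$ near $y$; setting $(D\sigma)_y:=(d_U(\widetilde\sigma|_U))_y$ is unambiguous precisely because $d_U$ is a local operator (Theorem \ref{1ed}). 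One checks $D$ is an antiderivation of degree $1$ with $D\circ D=0$ and $Df=df$, i.e.\ an exterior derivative on $\Omega$, so the uniqueness in Lemma \ref{2ed} forces $D=d_\Omega$. Evaluating at $x$ with $\sigma=\omega|_\Omega$ and invoking locality once more (since $\omega|_U$ and $\omega|_\Omega$ have the same germ at $x$) yields $(d_U(\omega|_U))_x=(d_\Omega(\omega|_\Omega))_x$, and the symmetric argument gives $(d_V(\omega|_V))_x=(d_\Omega(\omega|_\Omega))_x$; as $x$ was arbitrary, $d_U(\omega|_U)$ and $d_V(\omega|_V)$ coincide on $U\cap V$.

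With consistency in hand, I would define $d\omega$ by declaring $(d\omega)|_U:=d_U(\omega|_U)$ for every chart $(U,\phi)\in\mathcal{A}$. The local pieces agree on overlaps by the previous step, so they glue to a single map $d\omega\colon M\to\bigwedge^{k+1}T^*M$ with $\pi\circ d\omega=\mathrm{id}_M$; since near each point it equals an abstract-definable $\mathcal{C}^\infty$ $(k+1)$-form, the local characterization of forms (Theorem \ref{8f}(iii)) shows $d\omega\in\Omega^{k+1}(M)$. The resulting operator $d\colon\Omega^*(M)\to\Omega^*(M)$ is $\mathbb{R}$-linear and satisfies (i)--(iii): each of these is a local condition on $M$, so it follows at once from the fact that every $d_U$ already enjoys it (Lemma \ref{2ed}).

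For uniqueness, let $D$ be any exterior derivative on $M$. By Theorem \ref{1ed} it is a local operator, so the induced-operator construction above shows that for each chart $U$ the assignment $\sigma\mapsto(D\widetilde\sigma)|_U$, with $\widetilde\sigma$ a local extension, is an exterior derivative on $U$, hence equals $d_U$ by Lemma \ref{2ed}; thus $D\omega|_U=d_U(\omega|_U)=d\omega|_U$ for all charts and $D=d$. I expect the consistency step to be the main obstacle: it is where one must simultaneously use locality (Theorem \ref{1ed}) to make the restriction of $d_U$ to smaller opens meaningful, the bump-function extension (Lemma \ref{2ed*}, Corollary \ref{8pu}) to pass between local and global forms, and the chart-level uniqueness (Lemma \ref{2ed}); everything afterwards is a routine transfer of pointwise and local identities.
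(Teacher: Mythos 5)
Your proposal is correct, and its existence half coincides with the paper's: both define $d\omega$ chart by chart as $d_U(\omega|_U)$ and reduce chart-independence to the uniqueness clause of Lemma \ref{2ed}; your consistency argument via restriction-by-extension and locality is in effect a detailed justification of what the paper merely asserts ``follows from Lemma \ref{2ed}''. Where you genuinely diverge is in the uniqueness half. The paper stays global: given an exterior derivative $D$, it takes the Lemma \ref{2ed*} extension $\widetilde\omega=\sum_I\widetilde\omega_I\,d\widetilde x^{\,i_1}\wedge\cdots\wedge d\widetilde x^{\,i_k}$ (a sum of products of global functions with wedges of differentials of global functions), uses the cited identity $D(df_1\wedge\cdots\wedge df_k)=0$ together with axiom (iii) to compute $D\widetilde\omega=\sum_I d\widetilde\omega_I\wedge d\widetilde x^{\,I}=d\widetilde\omega$, and then concludes $D\omega=d\omega$ near each point by locality (Theorem \ref{1ed}). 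You instead localize $D$ itself: using Lemma \ref{2ed*} and Theorem \ref{1ed} you induce from $D$ a germ-level operator on each chart, check it is an exterior derivative there, and invoke the uniqueness clause of Lemma \ref{2ed}. Your route is more modular---it reuses the chart-level uniqueness rather than repeating the coordinate computation---at the cost of verifying that the induced operator is well defined and satisfies (i)--(iii); the paper's route avoids that formalism but redoes the algebra on globally extended forms. Both arguments rest on the same three pillars (Theorem \ref{1ed}, Lemma \ref{2ed}, Lemma \ref{2ed*}), and both need the refinement, which you rightly make explicit, that the bump function of Corollary \ref{8pu} can be arranged to equal $1$ near any prescribed point, so that the Lemma \ref{2ed*} extension agrees with the given form in a neighborhood of that point.
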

\begin{proof}
For each $k$, define $d_{(k)}\colon \Omega^k(M)\to \Omega^{k+1}(M)$ to be the linear map which associates an abstract-definable $\mathcal{C}^\infty$ $k$-form $\omega$ on $M$ to the map 
$$
x\mapsto (d_U\omega)_x,
$$ 
where $U$ is a chart on $M$ at $x$ and $d_U$ is given as in the proof of Lemma \ref{2ed}. The fact that $d_{(k)}$ does not depend on the choice of the chart $U$ follows from Lemma \ref{2ed}. Now, take $d\colon \Omega^*(M)\to \Omega^*(M)$ to be the linear map given by 
$$
d(\omega_0+\cdots+\omega_m)\mathrel{\mathop:}=d_{(0)}(\omega_0)+\cdots+d_{(m)}(\omega_m),
$$ 
with $\omega_k\in \Omega^k(M)$. Such a map satisfies the conditions (i)-(iii), since each $d_U$ does so. The uniqueness of the exterior derivative $d$ is obtained from the fact that for any exterior derivative $D\colon \Omega^*(M)\to \Omega^*(M)$ and abstract-definable $\mathcal{C}^\infty$ functions $f_1,\ldots,f_k\colon M\to \mathbb{R}$, $D(df_1\wedge \cdots \wedge df_k)=0$, and from Lemma \ref{2ed*}.
\end{proof}

Lemma \ref{4f}(i) and Theorem \ref{9f} give the following.

\begin{theorem}\label{4ed}
Consider an abstract-definable $\mathcal{C}^\infty$ map $F\colon N\to M$, and let $\omega\in \Omega^k(M)$. Then, $d(F^*\omega)=F^*d\omega$.
\end{theorem}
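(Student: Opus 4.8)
The plan is to exploit that both $d$ and $F^*$ are compatible with restriction to abstract-definable open sets, reducing the global identity to a coordinate computation. Since $\omega\mapsto d(F^*\omega)$ and $\omega\mapsto F^*(d\omega)$ are $\mathbb{R}$-linear, and since $d$ is a local operator by Theorem \ref{1ed} while $F^*$ is manifestly local (the covector $(F^*\omega)_x$ depends only on $\omega_{F(x)}$), it suffices to verify $d(F^*\omega)=F^*d\omega$ on $F^{-1}(U)$ for each chart $(U,\phi)=(U,x^1,\ldots,x^m)$ on $M$, as the sets $F^{-1}(U)$ form an abstract-definable open cover of $N$.

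Restricting to such a chart, I would write $\omega|_U=\sum_I \omega_I\, dx^I$ with $\omega_I\colon U\to\mathbb{R}$ abstract-definable $\mathcal{C}^\infty$ by Theorem \ref{8f}, and compute both sides. For the left-hand side, Lemma \ref{11f}(i),(iii) gives
$$
F^*\omega=\sum_I (\omega_I\circ F)\, F^*(dx^{i_1})\wedge\cdots\wedge F^*(dx^{i_k}),
$$
and Lemma \ref{4f}(i) identifies each $F^*(dx^{i_j})=d(x^{i_j}\circ F)$ as an exact $1$-form. Applying $d$ and using that $d$ is an antiderivation together with $d\circ d=0$, every summand in which $d$ falls on one of the factors $d(x^{i_j}\circ F)$ vanishes, leaving
$$
d(F^*\omega)=\sum_I d(\omega_I\circ F)\wedge d(x^{i_1}\circ F)\wedge\cdots\wedge d(x^{i_k}\circ F).
$$

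For the right-hand side, the local formula of Lemma \ref{2ed} gives $d\omega|_U=\sum_I d\omega_I\wedge dx^I$, whence Lemma \ref{11f} and a second application of Lemma \ref{4f}(i) yield
$$
F^*d\omega=\sum_I d(\omega_I\circ F)\wedge d(x^{i_1}\circ F)\wedge\cdots\wedge d(x^{i_k}\circ F),
$$
which matches the previous expression termwise.

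The only delicate point is the reduction step: the coordinate functions $x^i$ live only on $U$, so Lemma \ref{4f}(i) must be invoked with $U$ and $F^{-1}(U)$ in place of $M$ and $N$, and one must know that restricting the global exterior derivative $d$ to $U$ reproduces the local operator $d_U$ of Lemma \ref{2ed}; this is precisely the locality established in Theorem \ref{1ed} and built into the construction of $d$ in Theorem \ref{3ed}. Once this bookkeeping is in place, the remaining manipulation is the routine one carried out above, so I expect the naturality-of-restriction argument, rather than any computation, to be the substantive part of the proof.
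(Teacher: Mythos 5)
Your proof is correct and follows essentially the same route as the paper, whose entire justification is the one-line remark that Lemma \ref{4f}(i) and Theorem \ref{9f} give the result --- i.e.\ precisely the local coordinate computation you carry out, reducing via locality to a chart, writing $\omega|_U=\sum_I\omega_I\,dx^I$, and using $F^*(dg)=d(F^*g)$ together with the antiderivation property and $d\circ d=0$. Your write-up merely makes explicit the restriction bookkeeping (compatibility of $d$ with restriction to abstract-definable open sets, built into the construction in Theorem \ref{3ed}) that the paper leaves implicit.
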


If in Theorem \ref{4ed} we replace $N$ with an abstract-definable open subset $U\subseteq M$, and $F$ with the inclusion $\imath\colon U\to M$, we effortlessly obtain the following.

\begin{corollary}\label{5ed}
Let $U$ be an abstract-definable open subset of $M$, and $\omega\in \Omega^k(M)$. Then, ${(d\omega)|}_U=d\left({\omega|}_U\right)$.
\end{corollary}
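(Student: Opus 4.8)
The plan is to deduce the corollary directly from the naturality of the exterior derivative established in Theorem \ref{4ed}, taking the ambient map to be the inclusion $\imath\colon U\hookrightarrow M$. First I would equip the abstract-definable open set $U$ with the abstract-definable $\mathcal{C}^\infty$ structure it inherits from $M$, obtained by restricting each chart $(U_i,\phi_i)$ of $\mathcal{A}$ to $(U_i\cap U,\phi_i|_{U_i\cap U})$, and check that $\imath$ is an abstract-definable $\mathcal{C}^\infty$ map: read in these restricted charts it is the identity, hence certainly a $\mathcal{C}^\infty$-map in the sense recalled in Section 2 (and abstract-definable $\mathcal{C}^\infty$ by Lemma \ref{2f*}). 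In particular $U$ is itself an abstract-definable $\mathcal{C}^\infty$ manifold, so it carries its own exterior derivative by Theorem \ref{3ed}, and both sides of the asserted identity are well defined.

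The key observation is that pullback along the inclusion is nothing but restriction. For each $x\in U$, since $U$ is open in $M$ any curve representing a class in $T_xU$ is simultaneously a curve in $M$, and a chart on $M$ at $x$ restricts to a chart on $U$ at $x$ inducing the same map $\Phi_x$; hence the canonical identification makes $d_x\imath\colon T_xU\to T_{\imath(x)}M=T_xM$ the identity. Consequently, for $v_1,\dots,v_k\in T_xU$,
$$
(\imath^*\omega)_x(v_1,\dots,v_k)=\omega_{\imath(x)}(d_x\imath\,v_1,\dots,d_x\imath\,v_k)=\omega_x(v_1,\dots,v_k),
$$
which says precisely that $\imath^*\omega=\omega|_U$. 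The same computation applied to $d\omega$ yields $\imath^*(d\omega)=(d\omega)|_U$.

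With these identifications in hand I would simply invoke Theorem \ref{4ed} with $N=U$ and $F=\imath$, obtaining $d(\imath^*\omega)=\imath^*(d\omega)$, that is, $d(\omega|_U)=(d\omega)|_U$, as desired. There is no genuine obstacle here; the only point deserving care is the bookkeeping that the $d$ appearing on the left is the exterior derivative of the manifold $U$ (supplied by Theorem \ref{3ed}), while the $d$ on the right is that of $M$, and that Theorem \ref{4ed} is exactly the statement relating the two through $\imath^*$.
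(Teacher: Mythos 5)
Your proposal is correct and follows exactly the paper's route: the paper obtains Corollary \ref{5ed} by applying Theorem \ref{4ed} with $N$ replaced by the abstract-definable open set $U$ and $F$ by the inclusion $\imath\colon U\to M$. Your additional bookkeeping (the induced atlas on $U$, the identification $d_x\imath=\mathrm{id}$, and hence $\imath^*=$ restriction) merely makes explicit what the paper leaves as ``effortless.''
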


\section{O-minimal de Rham cohomology}

An abstract-definable $\mathcal{C}^\infty$ $k$-form $\omega$ on $M$ is said to be \textit{closed} if its derivative vanishes, that is, $d\omega=0$, and \textit{exact} if there is an abstract-definable $\mathcal{C}^\infty$ ($k-1$)-form $\tau$ on $M$ such that $\omega=d\tau$. Observe that every exact abstract-definable $\mathcal{C}^\infty$ $k$-form on $M$ is closed, since $d^2=0$. We denote by $Z^k(M)$ the $\mathbb{R}$-vector space of all closed abstract-definable $\mathcal{C}^\infty$ $k$-forms on $M$, and by $B^k(M)$ the $\mathbb{R}$-vector space of all exact abstract-definable $\mathcal{C}^\infty$ $k$-forms on $M$. Hence, $B^k(M)$ is a subspace of $Z^k(M)$ and we may form the quotient vector space $Z^k(M)/B^k(M)$. 

\begin{definition}\label{1dr}
The $\mathbb{R}$-vector space 
$$
H^k(M)\mathrel{\mathop:}= Z^k(M)/B^k(M)
$$
is called the k\textit{th o-minimal de Rham cohomology group} of $M$.
\end{definition}

Recall from Theorem \ref{4} that $M$ has finitely many definably connected components.

\begin{theorem}\label{2dr}
Let $d$ be the number of definably connected components of $M$. Then, $H^0(M)= \mathbb{R}^d$.
\end{theorem}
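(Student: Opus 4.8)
The plan is to identify $H^0(M)$ directly with the space of closed abstract-definable $\mathcal{C}^\infty$ $0$-forms, and then to show that these are exactly the functions that are constant on each definably connected component of $M$. First I would observe that there are no abstract-definable $\mathcal{C}^\infty$ $(-1)$-forms, so the space $B^0(M)$ of exact $0$-forms is trivial; hence $H^0(M) = Z^0(M)$ by Definition \ref{1dr}. Since $\bigwedge^0 T^*M = M \times \mathbb{R}$, a $0$-form is just an abstract-definable $\mathcal{C}^\infty$ function $f\colon M \to \mathbb{R}$, and by condition (iii) in the definition of the exterior derivative the form $df$ equals the differential of $f$. Thus $Z^0(M)$ consists of precisely those $f$ with $df = 0$.

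Next I would show that $df = 0$ is equivalent to $f$ being locally constant. In a chart $(U, \phi) = (U, x^1, \ldots, x^m)$ we have $df = \sum_i (\partial f/\partial x^i)\, dx^i$, so $df = 0$ on $U$ forces every partial derivative $\partial (f\circ \phi^{-1})/\partial r^i$ to vanish on the definable open set $\phi(U) \subseteq \mathbb{R}^m$. Working over the real field, standard calculus (the gradient of $f \circ \phi^{-1}$ vanishes identically, so $f \circ \phi^{-1}$ is constant along segments inside each ball, hence on each definably connected component of $\phi(U)$) yields that $f$ is locally constant; the converse is immediate. Invoking Theorem \ref{4}, I would write $M = C_1 \cup \cdots \cup C_d$ as the finite partition into definably connected components, each of which is abstract-definable, open and closed. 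Restricting $f$ to each $C_j$, Theorem \ref{6,9} shows that the locally constant abstract-definable map $f|_{C_j}$ is constant, say equal to some $a_j \in \mathbb{R}$.

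Finally I would exhibit the linear isomorphism $\Theta\colon Z^0(M) \to \mathbb{R}^d$ given by $f \mapsto (a_1, \ldots, a_d)$. It is manifestly linear, and injective since a function vanishing on every component is identically zero. For surjectivity, given $(a_1, \ldots, a_d) \in \mathbb{R}^d$ I would define $f$ to be $a_j$ on $C_j$; because the $C_j$ are abstract-definable open (and closed) sets covering $M$, this $f$ is locally constant, hence abstract-definable $\mathcal{C}^\infty$ with $df = 0$, so that $f \in Z^0(M)$ and $\Theta(f) = (a_1, \ldots, a_d)$. Combining the two displays $H^0(M) = Z^0(M) \cong \mathbb{R}^d$ then gives the claim.

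The main obstacle I anticipate is the implication $df = 0 \Rightarrow f$ locally constant together with the verification that a function prescribed to be constant on each component is genuinely abstract-definable $\mathcal{C}^\infty$: the former needs that a definable smooth function with identically vanishing gradient on a definable open subset of $\mathbb{R}^m$ is locally constant (clean over $\mathbb{R}$, but one must pass through the finitely many definably connected components of $\phi(U)$), and the latter relies crucially on the components $C_j$ being abstract-definable and simultaneously open and closed, which is precisely the content of Theorem \ref{4}. Everything else is routine bookkeeping with the definitions.
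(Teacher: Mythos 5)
Your proposal is correct and follows essentially the same route as the paper: both identify $H^0(M)$ with $Z^0(M)$, characterize the closed $0$-forms as the locally constant abstract-definable functions via the chart-level vanishing of $df$, and then use the finite decomposition into open, closed, abstract-definable definably connected components (Theorem \ref{4}) together with Theorem \ref{6,9} to see that these are exactly the functions constant on each of the $d$ components. Your write-up is merely more explicit than the paper's about $B^0(M)=0$ and about verifying injectivity and surjectivity of the map $Z^0(M)\to\mathbb{R}^d$, which the paper compresses into the opening remark that the relevant space of functions is $d$-dimensional.
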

\begin{proof}
Note that the vector space of all abstract-definable functions on $M$ which are constant on each definably connected component of $M$ is $d$-dimensional. Also, such a vector space agrees with the one constituted of all locally constant abstract-definable functions on $M$. This in turn coincides with $Z^0(M)$, since for any chart $(U,\phi)$ on $M$, $df=0$ on $U$ implies that $f$ is constant on each element of a finite partition of $U$ into open sets; and conversely, if $f$ is locally constant then each point in $U$ has a neighborhood in which $df$ vanishes, hence $df=0$ on $U$, and from the arbitrariness of $U$ it follows that $df$ is the identically zero map in $\Omega^1(M)$.
\end{proof}

Because $\Omega^k(M)=0$ for each $k> m$, we immediately get 

\begin{theorem}\label{3dr}
$H^k(M)= 0$, for all $k>m$.
\end{theorem}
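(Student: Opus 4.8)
The plan is to deduce the statement directly from the vanishing of the space of $k$-forms itself, $\Omega^k(M)=0$ for every $k>m$, which is exactly the fact signalled by the sentence immediately preceding the theorem; once this is in hand the cohomological conclusion is formal.

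First I would unwind the definition. By Definition \ref{6f}, an element of $\Omega^k(M)$ is an abstract-definable $\mathcal{C}^\infty$ section of the bundle $\pi\colon \bigwedge^kT^*M\to M$, whose fiber over a point $x$ is the space $\bigwedge^kT^*_xM$. Since $M$ has dimension $m$, each cotangent space $T^*_xM$ is an $m$-dimensional $R$-vector space, so the $R$-dimension of $\bigwedge^kT^*_xM$ equals $\binom{m}{k}$. For $k>m$ this binomial coefficient is $0$, and hence every fiber $\bigwedge^kT^*_xM$ is the trivial vector space $\{0\}$.

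Next I would observe that a section of a bundle all of whose fibers are trivial must itself be the zero section: writing $\omega(x)=(x,\omega_x)$ with $\omega_x\in\bigwedge^kT^*_xM=\{0\}$ forces $\omega_x=0$ for every $x\in M$. Therefore $\Omega^k(M)=\{0\}$ whenever $k>m$.

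Finally, since $Z^k(M)$ (the closed forms) and $B^k(M)$ (the exact forms) are both subspaces of $\Omega^k(M)$, they both vanish for $k>m$, and so $H^k(M)=Z^k(M)/B^k(M)=\{0\}/\{0\}=0$, as claimed. I do not anticipate any genuine obstacle here: the argument is a dimension count on exterior powers followed by a trivial quotient, and the only point deserving a line of care is the remark that a bundle with trivial fibers admits no nonzero section, which is automatic from the pointwise description above.
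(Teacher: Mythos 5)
Your proof is correct and follows exactly the paper's own argument: the paper simply notes that $\Omega^k(M)=0$ for $k>m$ and declares the result immediate. You have merely filled in the routine details (the fiber $\bigwedge^kT^*_xM$ has dimension $\binom{m}{k}=0$, so every section vanishes, and the quotient of zero spaces is zero), which is exactly the intended reasoning.
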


In view of Theorems \ref{2dr} and \ref{3dr} above, $H^0(\mathbb{R})=\mathbb{R}$ and $H^k(\mathbb{R})=0$ for $k\geq 2$. If we put $\mathcal{R}\mathrel{\mathop:}=\mathbb{R}_{\exp}$, on the other hand, it follows that $H^1(\mathbb{R})=\mathbb{R}$. Indeed, $[1/(1+x^2)]dx\in \Omega^1(\mathbb{R})=Z^1(\mathbb{R})$ has primitive $\arctan(x)$, which is not definable in $\mathbb{R}_{\exp}$ by virtue of Theorem 1 (\cite{bianconi1997}) and some trigonometric identities. In other words, $B^1(\mathbb{R})$ is a proper vector subspace of $Z^1(\mathbb{R})$, thereby $H^1(\mathbb{R})\neq 0$. Recalling that the classical de Rham cohomology groups $H_{\text{dR}}^k(\mathbb{R})$ of $\mathbb{R}$ are all trivial, we conclude that the o-minimal de Rham cohomology does not necessarily agree with the classical one. 

As mentioned above, any abstract-definable $\mathcal{C}^\infty$ map $F\colon N\to M$ induces a homomorphism $F^*\colon \Omega^*(M)\to \Omega^*(N)$ of graded algebras, the \textit{pullback map}, which preserves closed and exact abstract-definable forms, \textit{i.e.}, $F^*(Z^k(M))\subseteq Z^k(N)$ and $F^*(B^k(M))\subseteq B^k(N)$. The map $F^*$ in turn induces a map $F^\sharp\colon H^k(M)\to H^k(N)$, the \textit{pullback map in cohomology}, by setting 
$$
F^\sharp([\omega])\mathrel{\mathop:}= [F^*\omega].
$$
The linearity of $F^*$ implies that of $F^\sharp$. Moreover, if $\text{id}_M\colon M\to M$ is the identity map, then so is $\text{id}_M^\sharp\colon H^k(M)\to H^k(M)$; and for any abstract-definable $\mathcal{C}^\infty$ maps $F\colon N\to M$ and $G\colon M\to P$, we have $(G\circ F)^\sharp=F^\sharp\circ G^\sharp$. In other words, $^\sharp$ is a contravariant functor from the category of abstract-definable $\mathcal{C}^\infty$ manifolds and abstract-definable $\mathcal{C}^\infty$ maps to the category of vector spaces over $\mathbb{R}$. This proves the following.

\begin{theorem}\label{3,5dr}
$F^\sharp\colon H^k(M)\to H^k(N)$ is a linear isomorphism whenever $F\colon N\to M$ is an abstract-definable $\mathcal{C}^\infty$ diffeomorphism.
\end{theorem}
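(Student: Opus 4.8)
The plan is to deduce the theorem formally from the functorial properties of $^\sharp$ already recorded in the paragraph preceding the statement, namely that $^\sharp$ is a contravariant functor from abstract-definable $\mathcal{C}^\infty$ manifolds to $\mathbb{R}$-vector spaces: it sends the identity to the identity, reverses composition via $(G\circ F)^\sharp=F^\sharp\circ G^\sharp$, and is $\mathbb{R}$-linear on each $H^k$. The underlying principle is the standard one that any functor carries isomorphisms to isomorphisms, so nearly all of the work has been done and what remains is to assemble it.

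First I would observe that, since $F\colon N\to M$ is an abstract-definable $\mathcal{C}^\infty$ diffeomorphism, its set-theoretic inverse $F^{-1}\colon M\to N$ is itself an abstract-definable $\mathcal{C}^\infty$ map (indeed an abstract-definable $\mathcal{C}^\infty$ diffeomorphism), directly from the definition of diffeomorphism given in Section 2, according to which the local chart representatives of $F$ are definable $\mathcal{C}^\infty$ diffeomorphisms and hence have definable $\mathcal{C}^\infty$ inverses. Consequently the induced pullback $(F^{-1})^\sharp\colon H^k(N)\to H^k(M)$ is a well-defined $\mathbb{R}$-linear map, and this is really the only point that needs verifying before the categorical machinery applies.

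Next I would apply the contravariant functoriality twice. Composing $M\xrightarrow{F^{-1}}N\xrightarrow{F}M$ gives $F\circ F^{-1}=\text{id}_M$, whence
$$
(F^{-1})^\sharp\circ F^\sharp=(F\circ F^{-1})^\sharp=(\text{id}_M)^\sharp=\text{id}_{H^k(M)};
$$
and composing $N\xrightarrow{F}M\xrightarrow{F^{-1}}N$ gives $F^{-1}\circ F=\text{id}_N$, whence
$$
F^\sharp\circ (F^{-1})^\sharp=(F^{-1}\circ F)^\sharp=(\text{id}_N)^\sharp=\text{id}_{H^k(N)}.
$$
Thus $(F^{-1})^\sharp$ would serve as a two-sided inverse of $F^\sharp$, making $F^\sharp$ bijective; since it is already known to be linear, it is a linear isomorphism, which completes the argument.

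Because every ingredient was established in the discussion just above the statement, I do not expect any genuine obstacle here; the one thing I would be careful to spell out is precisely that an abstract-definable $\mathcal{C}^\infty$ diffeomorphism supplies an abstract-definable $\mathcal{C}^\infty$ \emph{inverse}, so that $(F^{-1})^\sharp$ exists and the functor $^\sharp$ can legitimately be applied to $F^{-1}$ in the first place.
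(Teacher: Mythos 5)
Your proposal is correct and is essentially the paper's own argument: the paper establishes exactly the functorial properties you cite ($\text{id}_M^\sharp=\text{id}_{H^k(M)}$, $(G\circ F)^\sharp=F^\sharp\circ G^\sharp$, linearity) and then states the theorem as an immediate consequence, i.e., that a contravariant functor carries isomorphisms to isomorphisms. Your extra care in checking that $F^{-1}$ is itself abstract-definable $\mathcal{C}^\infty$, so that $(F^{-1})^\sharp$ exists, is a point the paper leaves implicit, and it is verified just as you say from the definition of an abstract-definable $\mathcal{C}^\infty$ diffeomorphism in Section 2.
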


Given $[\omega],[\tau]\in H^k(M)$, we define 
$$
[\omega]\wedge [\tau]\mathrel{\mathop:}= [\omega\wedge \tau]. 
$$
Therefore, equipping the $\mathbb{R}$-vector space
$$
H^*(M)\mathrel{\mathop:}= \bigoplus\limits_{k=0}^m H^k(M)
$$
with this product, $H^*(M)$ becomes a graded algebra over $\mathbb{R}$. The anticommutativity of $H^*(M)$ is inherited from that of $\Omega^*(M)$. 

If $V,W\subseteq M$ are abstract-definable open subsets whose union covers $M$, then we have four inclusion maps: $\imath_V\colon V\to M$, $\imath_W\colon W\to M$, $\jmath_V\colon V\cap W\to V$, and $\jmath_W\colon V\cap W\to W$. Note that the pullback map $\imath_V^*\colon \Omega^k(M)\to \Omega^k(V)$ is the map that restricts the domain of an abstract-definable $\mathcal{C}^\infty$ $k$-form on $M$ to $V$.

\begin{lemma}\label{4dr}
Let $V, W\subseteq M$ be abstract-definable open cover of $M$. For each $k\geq 0$, the sequence below is exact
$$
0\to \Omega^k(M)\stackrel{\imath_k}{\to} \Omega^k(V)\oplus \Omega^k(W)\stackrel{\jmath_k}{\to} \Omega^k(V\cap W)\to 0,
$$
where $\imath_k\colon \Omega^k(M)\to \Omega^k(V)\oplus \Omega^k(W)$ is the map given by 
\begin{equation}
\omega\mapsto (\imath^*_V\omega, \imath^*_W\omega)=(\omega|_V,\omega|_W)
\end{equation}
whereas $\jmath_k\colon \Omega^k(V)\oplus \Omega^k(W)\to \Omega^k(V\cap W)$ is defined as 
\begin{equation}
(\omega,\tau)\mapsto \jmath_V^*\omega-\jmath^*_W\tau=\omega|_{V\cap W}-\tau|_{V\cap W}.
\end{equation}
In the case $V\cap W$ is empty, we have $\Omega^k(V\cap W)=0$, and consequently $\jmath_k$ is the zero map.
\end{lemma}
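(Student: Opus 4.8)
The plan is to check exactness at each of the three nodes separately, the only substantive ingredient being the abstract-definable partition of unity of Corollary \ref{5pu} together with the local characterization of abstract-definable $\mathcal{C}^\infty$ $k$-forms recorded in Theorem \ref{8f}. First I would dispose of the two easy statements. For injectivity of $\imath_k$ (exactness at $\Omega^k(M)$), if $\omega\in\Omega^k(M)$ satisfies $\omega|_V=0$ and $\omega|_W=0$, then $\omega_x=0$ for every $x\in V\cup W=M$, so $\omega=0$. The inclusion $\operatorname{im}\imath_k\subseteq\ker\jmath_k$ is the one-line identity $\jmath_k(\imath_k\omega)=\omega|_{V\cap W}-\omega|_{V\cap W}=0$.

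For the reverse inclusion $\ker\jmath_k\subseteq\operatorname{im}\imath_k$ (exactness at the middle term), I would take a pair $(\omega,\tau)$ with $\omega|_{V\cap W}=\tau|_{V\cap W}$ and define a map $\sigma\colon M\to\bigwedge^kT^*M$ with $\pi\circ\sigma=\text{id}_M$ by letting $\sigma$ equal $\omega$ on $V$ and $\tau$ on $W$; this is unambiguous precisely because the two forms agree on $V\cap W$. The point requiring argument is that $\sigma\in\Omega^k(M)$, and here criterion (iii) of Theorem \ref{8f} is tailored to the task: each $x\in M$ lies in the abstract-definable open set $V$ or $W$, and restricting a chart of $\mathcal{A}$ to that open set produces a chart at $x$ on which $\sigma$ coincides with $\omega$ (resp.\ $\tau$) and hence has abstract-definable $\mathcal{C}^\infty$ coefficients. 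Then $\imath_k(\sigma)=(\omega,\tau)$, as desired.

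The main step is the surjectivity of $\jmath_k$. For this I would fix an abstract-definable $\mathcal{C}^\infty$ partition of unity $\{f_V,f_W\}$ subordinate to $\{V,W\}$ as in Corollary \ref{5pu}, so that $\text{supp}(f_V)\subseteq V$, $\text{supp}(f_W)\subseteq W$, and $f_V+f_W=1$. Given $\eta\in\Omega^k(V\cap W)$, I would define $\omega\in\Omega^k(V)$ to be $f_W\eta$ on $V\cap W$ and $0$ on $V\setminus\text{supp}(f_W)$. These two abstract-definable open sets cover $V$ (a point of $V$ outside $\text{supp}(f_W)$ lies in the second set, and one inside $\text{supp}(f_W)\subseteq W$ lies in the first) and their defining formulas agree wherever $f_W$ vanishes, so $\omega$ is well defined; it is abstract-definable $\mathcal{C}^\infty$ by Theorem \ref{8f}(iii), using Lemma \ref{3vb} to see that $f_W\eta$ is abstract-definable $\mathcal{C}^\infty$. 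Symmetrically I would set $\tau\in\Omega^k(W)$ equal to $-f_V\eta$ on $V\cap W$ and $0$ on $W\setminus\text{supp}(f_V)$. A computation on $V\cap W$ then gives $\omega-\tau=(f_W+f_V)\eta=\eta$, whence $\jmath_k(\omega,\tau)=\eta$. When $V\cap W=\emptyset$ we have $\Omega^k(V\cap W)=0$, so surjectivity is vacuous and any pair $(\omega,\tau)$ glues to a global form exactly as in the previous paragraph.

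I expect the extension-by-zero and gluing bookkeeping to be the only place demanding care: at each stage one must confirm that the glued or truncated object is genuinely abstract-definable $\mathcal{C}^\infty$, and not merely smooth. This is precisely what the locality encoded in Theorem \ref{8f} and the definability-preserving partition of unity of Corollary \ref{5pu} are designed to guarantee, so no approximation or finiteness argument beyond those already in place is needed.
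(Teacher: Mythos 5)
Your proof is correct and follows essentially the same route as the paper's: injectivity and $\ker(\jmath_k)=\text{im}(\imath_k)$ by gluing $\omega$ and $\tau$ over the open cover $\{V,W\}$, and surjectivity via the abstract-definable $\mathcal{C}^\infty$ partition of unity of Corollary \ref{5pu}, sending $\eta$ to the extensions by zero of $f_W\eta$ and $-f_V\eta$. If anything, your bookkeeping is slightly more careful than the paper's: you glue over the two abstract-definable open sets $V\cap W$ and $V\setminus \text{supp}(f_W)$ (which makes abstract-definable smoothness immediate from Theorem \ref{8f}(iii)), whereas the paper writes the extension as $0$ on the possibly non-open set $V\setminus (V\cap W)$ and leaves that verification implicit.
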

\begin{proof}
We must prove the following statements, for each $k\geq 0$: (i) $\imath_k$ is one-to-one; (ii) $\ker(\jmath_k)=\text{im}(\imath_k)$; and (iii) $\jmath_k$ is onto.

(i) If $\imath_k(\omega)=(0,0)$, then $0=\imath^*_V(\omega)=\omega|_V$ and $0=\imath^*_W(\omega)=\omega|_W$, and since the sets $V$ and $W$ cover $M$, it results that $\omega=0$.

(ii) Let $(\omega,\tau)\in \Omega^k(V)\oplus \Omega^k(W)$ with $\jmath^*_V\omega-\jmath^*_W\tau=0$. This means that $\omega$ and $\tau$ agree on $V\cap W$. As a consequece, the map $\sigma\colon M\to \bigwedge^kT^*M$ given by \[
\sigma\mathrel{\mathop:}= \begin{cases}
\omega   &\text{on }\ V\\
\tau     &\text{on }\ W
\end{cases}
\]
is an abstract-definable $\mathcal{C}^\infty$ $k$-form on $M$ satisfying $\imath_k(\sigma)=(\omega,\tau)$. Conversely, for any abstract-definable $\mathcal{C}^\infty$ $k$-form $\omega$ on $M$, we have $\jmath_k(\imath_k(\omega))=\jmath_k^*(\omega|_V,\omega|_W)=\omega|_{V\cap W}-\omega|_{V\cap W}=0$.

(iii) Let $\omega\in \Omega^k(V\cap W)$. By the abstract-definable smooth version of Proposition \ref{5pu}, there are abstract-definable $\mathcal{C}^\infty$ nonnegative functions $f_V,f_W\colon M\to R$ such that $f_V+f_W=1$, $\text{supp}(f_V)\subseteq V$ and $\text{supp}(f_W)\subseteq W$. Take $\sigma_1\in \Omega^k(V)$ and $\sigma_2\in \Omega^k(W)$ to be, respectively, the maps
\[\sigma_1\mathrel{\mathop:}=
\begin{cases}
f_W\cdot \omega  &\text{on }\ V\cap W\\
0    &\text{on }\ V\setminus (V\cap W)
\end{cases}
\]
and
\[\sigma_2\mathrel{\mathop:}=
\begin{cases}
-f_V\cdot \omega &\text{on }\ V\cap W\\
0  &\text{on }\ W\setminus (V\cap W)
\end{cases}.
\]
Hence, 
\begin{align*}
\jmath_k(\sigma_1,\sigma_2)&=\jmath_V^*\sigma_1-\jmath^*_W\sigma_2=\sigma_1|_{V\cap W}-\sigma_2|_{V\cap W}\\
&=f_W\omega+f_V\omega=(f_V+f_W)\omega=\omega.
\end{align*}
\end{proof}

A straightforward application of Corollary \ref{5ed} yields the following.

\begin{lemma}\label{4,5dr}
Let $V, W\subseteq M$ be abstract-definable open cover of $M$. With the same notation as in Lemma \ref{4dr}, for each $k\geq 0$, the diagram
\[
\xymatrix{
0\ar[r] &\Omega^{k+1}(M)\ar[r]^-{\imath_{k+1}} &\Omega^{k+1}(V)\oplus\Omega^{k+1}(W)\ar[r]^-{\jmath_{k+1}} &\Omega^{k+1}(V\cap W)\ar[r] &0\\
0\ar[r] &\Omega^{k}(M)\ar[r]^-{\imath_k}\ar[u]^{d_k} &\Omega^k(V)\oplus\Omega^k(W)\ar[r]^-{\jmath_k}\ar[u]^{D_k}& \Omega^k(V\cap W)\ar[r]\ar[u]^{d_k}  &0  
}
\]
is commutative, where $D_k\colon \Omega^k(V)\oplus \Omega^k(W)\to \Omega^{k+1}(V)\oplus \Omega^{k+1}(W)$ is given by $D_k(\omega,\tau)\mathrel{\mathop:}= (d_k\omega, d_k\tau)$ and $\imath_k$, $\jmath_k$ are as those defined in Lemma \ref{4dr}.
\end{lemma}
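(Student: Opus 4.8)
The plan is to verify directly that each of the two squares in the diagram commutes, the essential ingredient being Corollary \ref{5ed}, which records that the exterior derivative commutes with restriction to an abstract-definable open subset. Since all the horizontal maps $\imath_k,\imath_{k+1},\jmath_k,\jmath_{k+1}$ are assembled out of such restrictions (namely $\imath_V^*,\imath_W^*,\jmath_V^*,\jmath_W^*$) and the vertical maps are exterior derivatives, commutativity ought to reduce to two applications of that corollary together with the $\mathbb{R}$-linearity of $d$.

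First I would treat the left square. Taking an arbitrary $\omega\in \Omega^k(M)$, I would compute $D_k(\imath_k(\omega))=(d(\omega|_V),d(\omega|_W))$ on one hand, and $\imath_{k+1}(d_k\omega)=((d\omega)|_V,(d\omega)|_W)$ on the other. By Corollary \ref{5ed} applied to the abstract-definable open subsets $V$ and $W$ of $M$, we have $(d\omega)|_V=d(\omega|_V)$ and $(d\omega)|_W=d(\omega|_W)$, so the two expressions agree and the left square commutes.

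Next I would treat the right square. Starting from an arbitrary pair $(\omega,\tau)\in \Omega^k(V)\oplus \Omega^k(W)$, one path gives $d_k(\jmath_k(\omega,\tau))=d(\omega|_{V\cap W})-d(\tau|_{V\cap W})$, using the linearity of $d$, while the other gives $\jmath_{k+1}(D_k(\omega,\tau))=(d\omega)|_{V\cap W}-(d\tau)|_{V\cap W}$. The one point deserving a word of care is that here Corollary \ref{5ed} must be invoked with the ambient manifold taken to be $V$ (respectively $W$) and the open subset taken to be $V\cap W$, rather than with $M$ as the ambient space; this is legitimate because $V$ and $W$, being abstract-definable open subsets, are themselves abstract-definable $\mathcal{C}^\infty$ manifolds, and $V\cap W$ is an abstract-definable open subset of each. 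With that observation, $(d\omega)|_{V\cap W}=d(\omega|_{V\cap W})$ and likewise for $\tau$, so both sides coincide and the right square commutes.

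I do not anticipate any genuine obstacle here: the statement is a formal consequence of the compatibility of $d$ with restriction established in Corollary \ref{5ed}, and the only mild subtlety is the change of ambient manifold in the right-hand square noted above. (The commutativity of the two outer squares involving the zero objects is automatic.)
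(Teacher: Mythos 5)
Your proof is correct and follows exactly the route the paper takes: the paper disposes of this lemma with the single remark that it is ``a straightforward application of Corollary \ref{5ed},'' and your verification of the two squares (including the careful point that in the right-hand square one applies Corollary \ref{5ed} with $V$, resp.\ $W$, as the ambient manifold and $V\cap W$ as the open subset) is precisely the straightforward application intended.
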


Lemmas \ref{4dr} and \ref{4,5dr} imply that 
$$
0\to \Omega^*(M)\to \Omega^*(V)\oplus \Omega^*(W)\to \Omega^*(V\cap W)\to 0
$$
is a short exact sequence of cochain complexes. Hence, by the Zig-zag lemma (Theorem 25.6, \cite{Tu2010}, p. 285) and the fact that 
$$
H^k(V)\oplus H^k(W)\cong \frac{\ker(D_k\colon \Omega^k(V)\oplus \Omega^k(W)\to \Omega^{k+1}(V)\oplus \Omega^{k+1}(W))}{\text{im}(D_{k-1}\colon \Omega^{k-1}(V)\oplus \Omega^{k-1}(W)\to \Omega^k(V)\oplus \Omega^k(W))}
$$ 
we obtain the Mayer-Vietoris sequence for o-minimal de Rham cohomology.

\begin{theorem}\label{5dr}
Let $V,W\subseteq M$ be abstract-definable open sets covering $M$. With the same notation as in Lemmas \ref{4dr} and \ref{4,5dr}, there exists a long exact sequence, the \emph{Mayer-Vietoris sequence},
$$
\cdots\to H^k(M)\stackrel{\imath_k^\sharp}{\to}H^k(V)\oplus H^k(W)\stackrel{\jmath_k^\sharp}{\to} H^k(V\cap W)\stackrel{d_k^\sharp}{\to}H^{k+1}(M)\to\cdots,
$$
where 
$$
\imath_k^\sharp([\omega])\mathrel{\mathop:}= ([\imath_V^*\omega],[\imath_W^*\omega]),
$$
$$\jmath_k^\sharp ([\omega],[\tau])\mathrel{\mathop:}= [\jmath^*_W\tau-\jmath^*_V\omega],
$$ 
and 
$$
d_k^\sharp([\omega])\mathrel{\mathop:}=[\imath_{k+1}^{-1}(D_k(\jmath_k^{-1}(\omega)))].
$$
Here, $\imath_{k+1}^{-1}(D_k(\jmath_k^{-1}(\omega)))$ means a chosen element $\tau$ in $\ker (d_{k+1})$ such that $\imath_{k+1}(\tau)$ is the value of $D_k$ at an element in $\jmath_{k}^{-1}(\omega)$. 
\end{theorem}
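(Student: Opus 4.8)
The plan is to treat this as a purely formal consequence of homological algebra, since the substantive work has already been carried out in Lemmas~\ref{4dr} and~\ref{4,5dr}. First I would assemble those two lemmas into a single statement. Lemma~\ref{4dr} gives, for each $k\geq 0$, the short exact sequence $0\to\Omega^k(M)\stackrel{\imath_k}{\to}\Omega^k(V)\oplus\Omega^k(W)\stackrel{\jmath_k}{\to}\Omega^k(V\cap W)\to 0$, and Lemma~\ref{4,5dr} shows that $\imath_\bullet$ and $\jmath_\bullet$ commute with the differentials, where the middle complex carries the differential $D_k(\omega,\tau)=(d_k\omega,d_k\tau)$. Hence $0\to\Omega^*(M)\to\Omega^*(V)\oplus\Omega^*(W)\to\Omega^*(V\cap W)\to 0$ is a short exact sequence of cochain complexes of $\mathbb{R}$-vector spaces.

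With this in place I would invoke the Zig-zag lemma (Theorem~25.6, \cite{Tu2010}), which attaches to any short exact sequence of cochain complexes a long exact sequence in cohomology; here the maps induced by $\imath_\bullet$ and $\jmath_\bullet$ are $\imath_k^\sharp$ and $\jmath_k^\sharp$, and the connecting morphism is $d_k^\sharp$. To identify the cohomology of the middle term as $H^k(V)\oplus H^k(W)$ I would use that taking cohomology commutes with finite direct sums, which is precisely the isomorphism recorded just before the theorem statement.

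The only place where actual verification is required is in confirming that the abstract connecting morphism produced by the Zig-zag lemma matches the explicit formula in the statement. Here I would run the standard diagram chase: given a closed $\omega\in\Omega^k(V\cap W)$, surjectivity of $\jmath_k$ supplies a preimage $\eta\in\jmath_k^{-1}(\omega)$; since $\jmath_{k+1}(D_k\eta)=d_k(\jmath_k\eta)=d_k\omega=0$ by the commutativity of Lemma~\ref{4,5dr} and the closedness of $\omega$, exactness at the middle term together with injectivity of $\imath_{k+1}$ yields a unique closed $\zeta\in\Omega^{k+1}(M)$ with $\imath_{k+1}(\zeta)=D_k\eta$, so that $d_k^\sharp([\omega])=[\zeta]=[\imath_{k+1}^{-1}(D_k(\jmath_k^{-1}(\omega)))]$. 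The sign appearing in $\jmath_k^\sharp$ and the independence of all choices are part of the conclusion of the Zig-zag lemma. I anticipate no genuine obstacle: because $\Omega^*$ already furnishes cochain complexes of $\mathbb{R}$-vector spaces and every map in sight is $\mathbb{R}$-linear, the classical argument transfers verbatim, and abstract-definability is preserved throughout since each form produced in the chase is built from the abstract-definable data already supplied by Lemma~\ref{4dr}.
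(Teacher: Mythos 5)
Your proposal is correct and follows essentially the same route as the paper: the paper likewise assembles Lemmas~\ref{4dr} and~\ref{4,5dr} into the short exact sequence of cochain complexes $0\to \Omega^*(M)\to \Omega^*(V)\oplus \Omega^*(W)\to \Omega^*(V\cap W)\to 0$, applies the Zig-zag lemma (Theorem 25.6 of \cite{Tu2010}), and uses the identification of the cohomology of the middle complex with $H^k(V)\oplus H^k(W)$. Your explicit diagram chase for the connecting morphism is just an unpacking of what the Zig-zag lemma already provides, matching the description of $d_k^\sharp$ given in the theorem statement.
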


From the fact that $\Omega^k(M)=0$ for $k<0$, it follows that the Mayer-Vietoris sequence starts 
$$
0\to H^0(M)\to H^0(V)\oplus H^0(W)\to \cdots
$$



\begin{definition}\label{6dr}
Two abstract-definable $\mathcal{C}^\infty$ maps $F,G\colon M\to N$ are said to be \textit{abstract-definably homotopic}, and denoted by $F\simeq G$, if there is an abstract-definable $\mathcal{C}^\infty$ map $H\colon M\times \mathbb{R}\to N$ such that 
$$
H(x,0)=F(x),\ \text{and}\ H(x,1)=G(x)\ \text{for all}\ x\in M.
$$
The map $H$ is called an \textit{abstract-definable $\mathcal{C}^\infty$ homotopy} from $F$ to $G$. 
\end{definition}

\begin{definition}\label{7dr}
An abstract-definable $\mathcal{C}^\infty$ map $F\colon M\to N$ is said to be an \textit{abstract-definable $\mathcal{C}^\infty$ homotopy equivalence} if there exists an abstract-definable $\mathcal{C}^\infty$ map $G\colon N\to M$ such that $G\circ F\simeq \text{id}_M$ and $F\circ G\simeq \text{id}_N$. In this case, we also say that $M$ is \textit{abstract-definably homotopy equivalent} to $N$. If $M$ is abstract-definably homotopy equivalent to a point, then $M$ is called \textit{abstract-definably contractible}.
\end{definition}

The invariance of the o-minimal de Rham cohomology under abstract-definable homotopy,  the \textit{Homotopy Axiom} for o-minimal de Rham cohomology, does not hold in general as we will see below. In order to help us verify such an assertion we state this o-minimal version of the Homotopy Axiom and derive an easy consequence.

\begin{theorem}[Homotopy Axiom]\label{8dr}
Let $f,g\colon N\to M$ be abstract-definable $\mathcal{C}^\infty$ maps. If $f\simeq g$, then the induced maps in cohomology $f^\sharp, g^\sharp\colon H^*(M)\to H^*(N)$ agree with each other.
\end{theorem}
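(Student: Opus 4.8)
The plan is to follow the classical cochain-homotopy argument of Tu, first reducing to a universal case and then producing an explicit homotopy operator by fiber integration. Let $H\colon N\times\mathbb{R}\to M$ be an abstract-definable $\mathcal{C}^\infty$ homotopy from $f$ to $g$, and let $\imath_0,\imath_1\colon N\to N\times\mathbb{R}$ be the abstract-definable $\mathcal{C}^\infty$ inclusions $x\mapsto(x,0)$ and $x\mapsto(x,1)$. Since $f=H\circ\imath_0$ and $g=H\circ\imath_1$, the contravariant functoriality of the pullback in cohomology yields $f^\sharp=\imath_0^\sharp\circ H^\sharp$ and $g^\sharp=\imath_1^\sharp\circ H^\sharp$; hence it suffices to show that $\imath_0^\sharp=\imath_1^\sharp$ as maps $H^*(N\times\mathbb{R})\to H^*(N)$.

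To this end I would construct an $\mathbb{R}$-linear operator $K\colon\Omega^k(N\times\mathbb{R})\to\Omega^{k-1}(N)$ and verify the cochain-homotopy identity
$$
\imath_1^*\omega-\imath_0^*\omega=d(K\omega)+K(d\omega),\qquad\omega\in\Omega^k(N\times\mathbb{R}),
$$
which immediately forces $\imath_0^*$ and $\imath_1^*$ to induce the same map in cohomology. Using the product structure, on a chart $(U,x^1,\dots,x^n)$ of $N$ with $t$ the coordinate on $\mathbb{R}$ every form splits canonically as $\omega=\sum_I a_I(x,t)\,dx^I+\sum_J b_J(x,t)\,dt\wedge dx^J$, and I would set $K\omega\mathrel{\mathop:}=\sum_J\bigl(\int_0^1 b_J(x,t)\,dt\bigr)\,dx^J$, with $K$ vanishing on the $dt$-free part. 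Since $d$ is a local operator (Theorem \ref{1ed}, Corollary \ref{5ed}) the identity may be checked chart by chart, where it reduces to differentiation under the integral sign together with the Fundamental Theorem of Calculus $\int_0^1(\partial b_J/\partial t)\,dt=b_J(x,1)-b_J(x,0)$; the $dx^I$-terms contribute only via $\imath_1^*-\imath_0^*$, and one checks the definition is independent of the chart.

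The hard part is hidden in the single requirement that $K\omega$ actually lie in $\Omega^{k-1}(N)$, that is, that the fiber integral $x\mapsto\int_0^1 b_J(x,t)\,dt$ be again an abstract-definable $\mathcal{C}^\infty$ function. Smoothness is free, since $\mathcal{R}$ expands $\mathbb{R}$ and one may differentiate under the integral; the obstruction is \emph{definability}. The coefficient $b_J$ is definable in $\mathcal{R}$, yet the Riemann integral of a definable function need not be definable in $\mathcal{R}$ --- precisely the phenomenon responsible for the failure of the axiom over $\mathbb{R}_{\exp}$, where $[1/(1+x^2)]\,dx$ has the nondefinable primitive $\arctan$. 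Thus over an arbitrary such $\mathcal{R}$ the operator $K$ need not be defined on $\Omega^*(N\times\mathbb{R})$ at all, and the argument stalls.

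To overcome this I would pass to the Pfaffian closure $\widetilde{\mathcal{R}}$, whose defining closure property under one-variable parametrized primitives makes $x\mapsto\int_0^1 b_J(x,t)\,dt$ definable; there $K$ is a genuine operator into abstract-definable $\mathcal{C}^\infty$ forms and the homotopy identity gives $\imath_0^\sharp=\imath_1^\sharp$. The remaining and, I expect, genuinely delicate point --- the reason the hypothesis on Bröcker's question is invoked --- is to ensure that enlarging the structure from $\mathcal{R}$ to $\widetilde{\mathcal{R}}$ does not change the cohomology groups involved, so that the equality of induced maps established over $\widetilde{\mathcal{R}}$ descends to the cohomology computed over $\mathcal{R}$. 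It is this comparison, rather than the formal chain-homotopy computation, that I anticipate to be the true obstacle.
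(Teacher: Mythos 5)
Your reduction to $\imath_0^\sharp=\imath_1^\sharp$ and your homotopy operator $K$ (zero on $dt$-free terms, fiber integration $\int_0^1 b_J(x,t)\,dt$ on terms containing $dt$) follow exactly the paper's skeleton, and your diagnosis that definability of the fiber integral is the crux is correct. But the way you resolve it contains a genuine error. The Pfaffian closure is \emph{not} closed under parametrized integration: a parametrized primitive $G(x,u)=\int_0^u b(x,t)\,dt$ is in general not Pfaffian over $\mathcal{R}$, because its partial derivatives in the parameter directions, $\partial G/\partial x^i=\int_0^u (\partial b/\partial x^i)(x,t)\,dt$, are themselves new fiber integrals and need not be expressible as definable functions $P_i(x,u,G(x,u))$; only $\partial G/\partial u=b(x,u)$ has the required Pfaffian form. (Without parameters, $\int_0^u h(t)\,dt$ is indeed Pfaffian, which is why the word ``parametrized'' in your claim is precisely where it fails.) So your assertion that passing to $\widetilde{\mathcal{R}}$ makes $x\mapsto\int_0^1 b_J(x,t)\,dt$ definable ``by the defining closure property'' is unfounded: the definability of such fiber integrals in \emph{some} o-minimal expansion is exactly the content of Br\"ocker's question, which the paper must \emph{assume}, not derive. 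This is where that hypothesis enters --- not where you put it.

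Consequently your last paragraph misidentifies the architecture of the proof. The paper does not establish the cochain-homotopy identity over $\widetilde{\mathcal{R}}$ and then ``descend'' to cohomology computed over $\mathcal{R}$; no comparison of cohomology groups across structures occurs or is needed, because the whole theory (manifolds, forms, $H^*$) is set up over the Pfaffian closure $\mathcal{P}(\mathcal{R})$ from the outset. The Br\"ocker assumption is used, via Lemma \ref{11,1odrc}, to show that $\mathcal{P}(\mathcal{R})$ is \emph{itself} closed under fiber integration: a $\mathcal{P}(\mathcal{R})$-definable $b$ is already definable at some finite stage $\mathcal{R}_i$ of the construction, Br\"ocker's question applied to the pair $(\mathcal{R}_i,\mathcal{P}(\mathcal{R}_i))$ makes $\int_0^1 b(\cdot,t)\,dt$ definable in $\mathcal{P}(\mathcal{R}_i)$, and $\mathcal{P}(\mathcal{R}_i)$ is interdefinable with $\mathcal{P}(\mathcal{R})$. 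So the obstacle you anticipated as the ``true'' one does not exist, while the step you waved through as automatic is exactly the assumed statement. A secondary gap: in the abstract-definable setting you cannot simply define $K$ chart by chart and ``check independence,'' since the coefficients $b_J$ and the frame forms $dx^J$ live only on a chart; to land in $\Omega^{k-1}(N)$ the paper first proves a global decomposition of any $\omega\in\Omega^k(N\times\mathbb{R})$ into finitely many terms $a\,\pi^*\eta$ and $b\,dt\wedge\pi^*\tau$ with $\eta,\tau$ \emph{global} forms on $N$, using the finite partition of unity, auxiliary bump functions, and the extension-by-zero Lemma \ref{lemmaforhomotopyaxiom}; your sketch omits this step entirely.
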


One immediate consequence of Theorem \ref{8dr} is the fact that, for each $k\geq 0$, $H^k(M)=H^k(N)$ whenever $M$ is abstract-definably homotopy equivalent to $N$. Thus, if $M$ is abstract-definably contractible, there exists a point $a\in M$ such that $H^k(M)=H^k(a)$ for $k\geq 0$. Hence $H^k(M)=0$ if $k>0$ (Theorem \ref{3dr}) and $H^0(M)=\mathbb{R}$ (Theorem \ref{2dr}). This proves the following corollary, also known as \textit{Poincar\'e's lemma}.

\begin{corollary}\label{9dr}
If $M$ is definably contractible, then $H^0(M)=\mathbb{R}$ and $H^k(M)=0$, for each $k\geq 1$.
\end{corollary}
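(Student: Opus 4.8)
The plan is to derive Corollary \ref{9dr} directly from the Homotopy Axiom (Theorem \ref{8dr}) together with the already-established computations of the cohomology of a point. First I would unwind the definition of abstract-definable contractibility: by Definition \ref{7dr}, there is a point $a \in M$ and abstract-definable $\mathcal{C}^\infty$ maps $c\colon M \to \{a\}$ and $\imath\colon \{a\} \to M$ (the inclusion) such that $\imath \circ c \simeq \mathrm{id}_M$ and $c \circ \imath \simeq \mathrm{id}_{\{a\}}$; the latter is automatic since $\{a\}$ is a single point. Here I treat $\{a\}$ as a zero-dimensional abstract-definable $\mathcal{C}^\infty$ manifold, so that the machinery applies to it.

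Next I would pass to cohomology via the functoriality of $^\sharp$ recorded just before Theorem \ref{3,5dr}. Applying the contravariant functor to $\imath \circ c \simeq \mathrm{id}_M$ and invoking Theorem \ref{8dr} on this homotopy, I get $(\imath \circ c)^\sharp = (\mathrm{id}_M)^\sharp = \mathrm{id}_{H^k(M)}$ for each $k$. Since $(\imath \circ c)^\sharp = c^\sharp \circ \imath^\sharp$, this exhibits $\mathrm{id}_{H^k(M)}$ as a composite factoring through $H^k(\{a\})$; applying the functor to $c \circ \imath \simeq \mathrm{id}_{\{a\}}$ likewise gives $\imath^\sharp \circ c^\sharp = \mathrm{id}_{H^k(\{a\})}$. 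Thus $c^\sharp$ and $\imath^\sharp$ are mutually inverse linear isomorphisms, so $H^k(M) \cong H^k(\{a\})$ for every $k \geq 0$.

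It then remains to compute $H^k(\{a\})$. Since a point has exactly one definably connected component, Theorem \ref{2dr} gives $H^0(\{a\}) = \mathbb{R}$, and since the dimension of $\{a\}$ is $0$, Theorem \ref{3dr} gives $H^k(\{a\}) = 0$ for all $k > 0$. Transporting these along the isomorphism of the previous paragraph yields $H^0(M) = \mathbb{R}$ and $H^k(M) = 0$ for each $k \geq 1$, which is precisely the assertion of the corollary.

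I expect no serious obstacle here, as the corollary is a formal consequence of the Homotopy Axiom (the genuinely hard content lives in Theorem \ref{8dr}, which the corollary is permitted to assume). The only point demanding a little care is the bookkeeping of a point as a legitimate abstract-definable $\mathcal{C}^\infty$ manifold so that $H^k(\{a\})$ is defined and Theorems \ref{2dr} and \ref{3dr} apply to it; this is routine, and indeed the discussion immediately following Theorem \ref{8dr} already sketches exactly this reduction, so my role is to present it cleanly as the proof of Corollary \ref{9dr}.
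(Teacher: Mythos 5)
Your proof is correct and follows essentially the same route as the paper: the paper's own argument is precisely the paragraph preceding Corollary \ref{9dr}, which derives $H^k(M)=H^k(a)$ from the Homotopy Axiom (Theorem \ref{8dr}) via homotopy equivalence with a point and then invokes Theorems \ref{2dr} and \ref{3dr} to compute the cohomology of that point. Your write-up merely makes explicit the functoriality bookkeeping ($c^\sharp$ and $\imath^\sharp$ being mutually inverse) that the paper leaves implicit, which is a fine presentational choice but not a different argument.
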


Corollary \ref{9dr} picks o-minimal expansions of the real field (which admit smooth cell decomposition and define the exponential function) as candidates for which Theorem \ref{8dr} holds. These are to some extent large o-minimal structures, \textit{i.e.}, those that define sufficiently many primitives. In this sense, the exponential real field $\mathbb{R}_{\text{exp}}$ is not a large o-minimal structure, since $\mathbb{R}$ is definably contractible and $H^1(\mathbb{R})=1$ (see the remark right after Theorem \ref{3dr}).

In the sequel, we introduce an enlarged o-minimal structure, in the sense of what we just discoursed, and fix some assumptions in order to give a proof of Theorem \ref{8dr} in that setting.

Suppose from now on $\mathcal{R}$ is an o-minimal expansion of the real field $\mathbb{R}$, and let $U$ be a definable (in $\mathcal{R}$) open subset of $\mathbb{R}^n$. Recall from \cite{jospei2012} (p. 2) that a $\mathcal{C}^1$ function $f\colon U\to \mathbb{R}$ is \textit{Pfaffian over $\mathcal{R}$} if there exist definable (in $\mathcal{R}$) $\mathcal{C}^1$ functions $P_i\colon U\times \mathbb{R}\to\mathbb{R}$ for $i=1,\ldots,n$ such that 
$$
\frac{\partial f}{\partial r^i}(x)=P_i(x,f(x)),\ x\in U.
$$
Denote by $\mathcal{L}(\mathcal{R})$ the collection of all total functions $f\colon \mathbb{R}^n\to \mathbb{R}$ for all $n\in \mathbb{N}$ that are Pfaffian over $\mathcal{R}$. Set $\mathcal{R}_0\mathrel{\mathop :}= \mathcal{R}$, and for each $i\geq 0$ let $\mathcal{R}_{i+1}$ be the expansion of $\mathcal{R}_i$ by all functions in $\mathcal{L}(\mathcal{R}_i)$. Let $\mathcal{L}$ be the union of all $\mathcal{L}(\mathcal{R}_i)$ and let $\mathcal{P}(\mathcal{R})$ be the expansion of $\mathcal{R}$ by all the functions in $\mathcal{L}$. We call the structure $\mathcal{P}(\mathcal{R})$ the \textit{Pfaffian closure} of $\mathcal{R}$. Both Theorem 4.1 (\cite{spei1998}) and Theorem 1 (\cite{jospei2012}) imply that the Pfaffian closure $\mathcal{P}(\mathcal{R})$ of $\mathcal{R}$ is o-minimal. 
\\

\textit{From now on ``definable'' we mean ``definable in $\mathcal{P}(\mathcal{R})$ with parameters in $\mathbb{R}$'', where $\mathcal{P}(\mathcal{R})$ denotes the Pfaffian closure of $\mathcal{R}$, and by ``$\mathcal{R}$-definable'' we mean ``definable in $\mathcal{R}$ with parameters in $\mathbb{R}$''}.
\\

Recall from \cite{LionSpeissegger} that $\mathcal{P}(\mathcal{R})$ admits smooth cell decomposition.

The following assertion, known as \textit{Br\"ocker's question}, was pointed out to us by P. Speissegger.
\\

\noindent\textbf{Claim} (Br\"ocker's question)\label{Brockersproblem}
 For any continuous function $b\colon \mathbb{R}^n\times \mathbb{R}\to \mathbb{R}$ which is definable in an o-minimal expansion $\mathfrak{R}$ of $\mathbb{R}$, the function $B\colon \mathbb{R}^n\to \mathbb{R}$, given by 
 $$B(x)\mathrel{\mathop:}=\int_0^1b(x,t)dt,
 $$ 
 is definable  in an o-minimal expansion $\widetilde{\mathfrak{R}}$ of $\mathfrak{R}$.
 \\
 
 The above statement has been first proved for the case in which $\mathfrak{R}=\mathbb{R}_{\text{an}}$ and $\widetilde{\mathfrak{R}}=\mathbb{R}_{\text{an,exp}}$ by J.-M. Lion and J.-P. Rolin (\cite{LionRolin}). In \cite{kaiser2012} (Theorem 1.9), T. Kaiser formulated and proved a generalization of Br\"ocker's question. Namely, the Lebesgue measure $\lambda_n$ on $\mathbb{R}^n$ satisfies in particular the following condition: there exists an o-minimal expansion $\widetilde{\mathfrak{R}}$ of $\mathbb{R}_{\text{an}}^{\mathbb{R}_\text{alg}}$ such that for any definable (in $\mathbb{R}_{\text{an}}^{\mathbb{R}_\text{alg}}$) function $f\colon \mathbb{R}^m\times \mathbb{R}^n\to \mathbb{R}$ the set $\infty(f,\lambda_n)\mathrel{\mathop:}=\{x\in \mathbb{R}^m\,:\, \int_{\mathbb{R}^n}|f(x,t)|d\lambda_n(t)=\infty\}$ is definable in $\mathbb{R}_{\text{an}}^{\mathbb{R}_\text{alg}}$, and the function $x\mapsto \int_{\mathbb{R}^n}f(x,t)d\lambda_n(t)\colon \mathbb{R}^m\setminus \infty(f,\lambda_n)\to \mathbb{R}$ is definable in $\widetilde{\mathfrak{R}}$. (Recall that $\mathbb{R}_{\text{an}}$ indicates the o-minimal expansion of the real field $\mathbb{R}$ by all restricted real analytic functions, and $\mathbb{R}_{\text{an}}^{\mathbb{R}_\text{alg}}$ denotes the expansion of $\mathbb{R}_{\text{an}}$ by all power functions with exponent in $\mathbb{R}_{\text{alg}}$, the field of real algebraic numbers.)   

\begin{lemma}\label{11,1odrc}
Assume that the Br\"ocker's question holds for any o-minimal expansion $\mathfrak{R}$ of $\mathbb{R}_{\exp}$ and for $\widetilde{\mathfrak{R}}$ taken to be the Pfaffian closure of $\mathfrak{R}$. If $b\colon \mathbb{R}^n\times \mathbb{R}\to \mathbb{R}$ is a definable $\mathcal{C}^\infty$ function, so is $B\colon \mathbb{R}^n\to \mathbb{R}$, where $B(x)\mathrel{\mathop:}= \int_0^1b(x,t)dt$. As a consequence, given an abstract-definable $\mathcal{C}^\infty$ function $b\colon N\times \mathbb{R}\to \mathbb{R}$, the function $\overline{b}\colon N\to \mathbb{R}$ given by $y\mapsto \int_0^1 b(y,t)dt$ is abstract-definable $\mathcal{C}^\infty$ as well.
\end{lemma}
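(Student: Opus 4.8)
The plan is to establish the two assertions separately: the statement about definable functions reduces to Br\"ocker's question once one notices that the Pfaffian closure is idempotent, and the abstract-definable version then follows by passing to charts.

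For the first assertion, I would first record that $\mathcal{P}(\mathcal{R})$ is an o-minimal expansion of $\mathbb{R}_{\exp}$: it is o-minimal by Theorem 1 (\cite{jospei2012}), and it expands $\mathcal{R}$, which defines $\exp$. Since $b$ is definable (meaning definable in $\mathcal{P}(\mathcal{R})$) and continuous, being $\mathcal{C}^\infty$, and since $b(x,\cdot)$ is continuous on the compact interval $[0,1]$ so that $B(x)$ is a well-defined Riemann integral, I would invoke the assumed Br\"ocker's question with $\mathfrak{R}=\mathcal{P}(\mathcal{R})$. This gives that $B$ is definable in $\widetilde{\mathfrak{R}}=\mathcal{P}(\mathcal{P}(\mathcal{R}))$. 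The decisive point, which I expect to be the main obstacle, is the idempotency $\mathcal{P}(\mathcal{P}(\mathcal{R}))=\mathcal{P}(\mathcal{R})$: a total function Pfaffian over $\mathcal{P}(\mathcal{R})$ has its defining data $P_i$ definable in $\mathcal{P}(\mathcal{R})$, hence---as any formula involves only finitely many of the functions successively adjoined in the construction---definable already in some $\mathcal{R}_j$; the function is therefore Pfaffian over $\mathcal{R}_j$, lies in $\mathcal{L}(\mathcal{R}_j)\subseteq\mathcal{L}$, and is definable in $\mathcal{P}(\mathcal{R})$. Thus $\mathcal{P}(\mathcal{R})$ is its own Pfaffian closure and $B$ is definable.

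That $B$ is moreover $\mathcal{C}^\infty$ is classical and independent of the o-minimal setting: by the Leibniz rule for differentiation under the integral sign, the joint continuity of each partial derivative $\partial^\alpha_x b$ on $\mathbb{R}^n\times[0,1]$ yields, by induction on $|\alpha|$, that $B\in\mathcal{C}^\infty$ with $\partial^\alpha_x B(x)=\int_0^1\partial^\alpha_x b(x,t)\,dt$.

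For the consequence, I would reduce to the local expressions of $\overline{b}$. By Remark \ref{remarksection1} I may work with an atlas on $N$ whose charts $(V,\psi)$ satisfy $\psi(V)=\mathbb{R}^n$; as abstract-definability is invariant under $\sim$-equivalent atlases and $\mathbb{R}$ carries its identity chart, Lemma \ref{2f*} reduces the claim to checking each such local expression. Since $N\times\mathbb{R}$ is an abstract-definable $\mathcal{C}^\infty$ manifold with product charts $(V\times\mathbb{R},\psi\times\mathrm{id})$, the function $\beta\mathrel{\mathop:}=b\circ(\psi^{-1}\times\mathrm{id}_{\mathbb{R}})\colon\mathbb{R}^n\times\mathbb{R}\to\mathbb{R}$ is definable $\mathcal{C}^\infty$. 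Applying the first part to $\beta$ shows that $u\mapsto\int_0^1\beta(u,t)\,dt$ is definable $\mathcal{C}^\infty$ on $\mathbb{R}^n$; as this map equals $\overline{b}\circ\psi^{-1}$, the local expression of $\overline{b}$ in the chart $(V,\psi)$ is a $\mathcal{C}^\infty$-map. Running over the charts, which cover $N$, gives that $\overline{b}$ is abstract-definable $\mathcal{C}^\infty$.
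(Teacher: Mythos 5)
Your proof is correct, and two of its three ingredients coincide with the paper's: the smoothness of $B$ comes from differentiation under the integral sign (the paper cites Theorem C.14 of \cite{Lee}), and the abstract-definable consequence is reduced chart-by-chart, using Remark \ref{remarksection1} and the identity $\overline{b}\circ\psi^{-1}(u)=\int_0^1\bigl(b\circ(\psi\times\mathrm{id}_{\mathbb{R}})^{-1}\bigr)(u,t)\,dt$, exactly as in the paper. The genuine difference is in how definability of $B$ in $\mathcal{P}(\mathcal{R})$ is extracted from Br\"ocker's question. The paper descends $b$ to a finite stage: since any formula over $\mathcal{P}(\mathcal{R})$ mentions only finitely many of the adjoined functions, $b$ is already definable in some $\mathcal{R}_i$, which is an o-minimal expansion of $\mathbb{R}_{\exp}$; Br\"ocker's question applied at $\mathfrak{R}=\mathcal{R}_i$ gives $B$ definable in $\mathcal{P}(\mathcal{R}_i)$, and $\mathcal{P}(\mathcal{R}_i)$ is interdefinable with $\mathcal{P}(\mathcal{R})$ --- essentially by construction, as the stages of $\mathcal{P}(\mathcal{R}_i)$ are just $\mathcal{R}_i,\mathcal{R}_{i+1},\ldots$, so the two expansions have the same primitives. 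You instead apply Br\"ocker's question at the top, $\mathfrak{R}=\mathcal{P}(\mathcal{R})$, and pay for it by proving idempotency of the Pfaffian closure; your idempotency argument uses the very same finite-stage descent, applied to the Pfaffian data $P_1,\ldots,P_n$ rather than to $b$. This is a fair trade: the paper's interdefinability fact is nearly definitional, while your route makes the closure property explicit and self-contained. One small polish: as written, your descent shows that every function in $\mathcal{L}(\mathcal{P}(\mathcal{R}))$ is definable in $\mathcal{P}(\mathcal{R})$, which handles only the first stage of the construction of $\mathcal{P}(\mathcal{P}(\mathcal{R}))$; to conclude that the full closure $\mathcal{P}(\mathcal{P}(\mathcal{R}))$ is interdefinable with $\mathcal{P}(\mathcal{R})$, add the routine induction over its stages (interdefinable structures have the same definable sets, hence the same total Pfaffian functions over them, so each successive stage stays interdefinable with $\mathcal{P}(\mathcal{R})$).
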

\begin{proof}
Since $b$ is definable in $\mathcal{P}(\mathcal{R})$, there is some $i\geq 0$ such that $b$ is definable in $\mathcal{R}_i$, where $\mathcal{R}_0\mathrel{\mathop:}= \mathcal{R}$ (see the definition of Pfaffian closure above). Note, from the comments following Theorem 1 (\cite{jospei2012}, p. 2), that the Pfaffian closure of $\mathcal{R}$ can be obtained by adding only definable $\mathcal{C}^\infty$ total functions. In particular, $\mathcal{R}_i$ is an o-minimal expansion of $\mathbb{R}_{\exp}$ which admits smooth cell decomposition. Hence, the assumption implies that $B$ is definable in $\mathcal{P}(\mathcal{R}_i)$. The conclusion that $B$ is definable in $\mathcal{P}(\mathcal{R})$ follows from the fact that $\mathcal{P}(\mathcal{R}_i)$ and $\mathcal{P}(\mathcal{R})$ are interdefinable. The smoothness of $B$ is ensured, for instance, by Theorem C.14 (\cite{Lee}, p. 648).

Now, observe that for any chart $(V,\psi)$ on $N$ the composition $\overline{b}\circ \psi^{-1}\colon \mathbb{R}^n\to \mathbb{R}$ agrees with 
$$
\psi(z)\mapsto \int_0^1 (b\circ (\psi\times \text{id}_{\mathbb{R}})^{-1})(\psi(z),t)dt.
$$ 
(Here we assumed the codomains of the charts $\psi\colon V\to \psi(V)$ are the whole $\mathbb{R}^n$, see Remark \ref{remarksection1}.) By hypothesis, $b\circ (\psi\times \text{id}_{\mathbb{R}})^{-1}$ is a definable $\mathcal{C}^\infty$ function on $\mathbb{R}^n\times \mathbb{R}$, and from the first part of the lemma it follows that $\overline{b}\circ \psi^{-1}$ is also definable $\mathcal{C}^\infty$. This proves that $\overline{b}$ is an abstract-definable $\mathcal{C}^\infty$ function.
\end{proof}

\begin{lemma}\label{lemmaforhomotopyaxiom}
Let $U$ be an abstract-definable open subset of $N$, and let $\omega\in \Omega^k(U)$ with
$$
\text{supp}(\omega)\subseteq F\subseteq U,
$$
where $F$ is an abstract-definable closed subset of $N$. Then, $\omega$ can be extended to an abstract-definable $\mathcal{C}^\infty$ $k$-form $\widetilde{\omega}$ on $N$. 
\end{lemma}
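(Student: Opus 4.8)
The plan is to define $\widetilde{\omega}$ by extension by zero and then to certify that the resulting map is an abstract-definable $\mathcal{C}^\infty$ $k$-form by checking the pointwise coordinate criterion of Theorem \ref{8f}. Concretely, I would set
$$
\widetilde{\omega}(x)\mathrel{\mathop:}=
\begin{cases}
\omega(x) & x\in U\\
(x,0) & x\in N\setminus F,
\end{cases}
$$
where $(x,0)$ denotes the zero element of the fiber $\bigwedge^k T_x^*N$. Since $F\subseteq U$, the two abstract-definable open sets $U$ and $N\setminus F$ cover $N$ (here $N\setminus F$ is abstract-definable open because $F$ is abstract-definable closed in $N$). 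I would first check that $\widetilde{\omega}$ is well defined on the overlap $U\cap(N\setminus F)=U\setminus F$: the inclusion $\{x\in U:\omega_x\neq 0\}\subseteq\text{supp}(\omega)\subseteq F$ forces $\omega$ to vanish identically on $U\setminus F$, so the two prescriptions agree there. It is then immediate that $\pi\circ\widetilde{\omega}=\text{id}_N$ and that $\widetilde{\omega}$ restricts to $\omega$ on $U$.

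Next I would verify condition (iii) of Theorem \ref{8f} for $\widetilde{\omega}$. If $x\in U$, choose a chart $(V,\phi)$ with $x\in V\subseteq U$; since $\omega\in\Omega^k(U)$, Theorem \ref{8f} applied to $\omega$ provides an expression ${\omega|}_V=\sum_I\omega_I\,dx^I$ with abstract-definable $\mathcal{C}^\infty$ coefficients $\omega_I$, and on $V$ we have ${\widetilde{\omega}|}_V={\omega|}_V$, so the required representation holds at $x$. If instead $x\in N\setminus F$, choose a chart $(V,\phi)$ with $x\in V\subseteq N\setminus F$; there ${\widetilde{\omega}|}_V$ is identically zero, whose coordinate coefficients are the zero functions, trivially abstract-definable $\mathcal{C}^\infty$. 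Because every point of $N$ lies in $U$ or in $N\setminus F$, condition (iii) is satisfied at every point, and Theorem \ref{8f} then yields $\widetilde{\omega}\in\Omega^k(N)$.

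This is the classical extension-by-zero argument transported to the tame setting, and I expect no genuinely hard step. The one point meriting care — and the crux of the matter — is the consistency of the two pieces on the overlap $U\setminus F$, which is precisely what the support hypothesis $\text{supp}(\omega)\subseteq F$ is designed to guarantee; without it the gluing could fail. Everything else reduces to selecting charts inside the two covering open sets and invoking the already-established coordinate characterization of abstract-definable smooth $k$-forms, so no new o-minimality input beyond Theorem \ref{8f} is needed.
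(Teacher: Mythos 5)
Your proof is correct, but it takes a genuinely different route from the paper's. The paper constructs $\widetilde{\omega}$ by invoking the abstract-definable smooth version of Corollary \ref{7pu} to obtain a bump function $\rho\colon N\to\mathbb{R}$ with $0\leq\rho\leq 1$, $\rho|_F=1$ and $\text{supp}(\rho)\subseteq U$, and then setting $\widetilde{\omega}$ equal to $\rho\omega$ on $U$ and to $0$ elsewhere; the support hypothesis enters there, exactly as in your overlap check, to show $\rho\omega=\omega$ on $U$, so that $\widetilde{\omega}$ really extends $\omega$. You instead glue $\omega$ with the zero section over the abstract-definable open cover $\{U,\,N\setminus F\}$ --- agreement on $U\setminus F$ being forced by $\text{supp}(\omega)\subseteq F$ --- and certify abstract-definable smoothness by the pointwise coordinate criterion of Theorem \ref{8f}, taking charts inside $U$ or inside $N\setminus F$. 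Since $\rho\omega=\omega$ on $U$, the two constructions produce the same form; the difference lies entirely in how smoothness is justified. Your argument is the more economical one: it needs only Theorem \ref{8f} (together with the standard fact, used repeatedly in the paper, that restricting a chart to an abstract-definable open set yields a compatible chart), rather than the bump-function and partition-of-unity machinery behind Corollaries \ref{5pu}--\ref{7pu}, and it makes explicit the verification at points of $\text{bd}(U)$ --- where $\widetilde{\omega}$ vanishes identically on the open set $N\setminus F$ --- which the paper's proof compresses into the unargued assertion that $\widetilde{\omega}$ is ``well-defined as an abstract-definable $\mathcal{C}^\infty$ map''; your gluing step is also the same device the paper itself uses in part (ii) of the proof of Lemma \ref{4dr}. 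What the paper's phrasing buys is mainly uniformity with the way the lemma is later applied in the proof of Theorem \ref{8dr}, where cutoff functions $g_j$ genuinely are needed to manufacture forms satisfying the support hypothesis before the lemma is invoked; but inside the lemma itself the extra factor $\rho$ is dispensable, as your proof shows.
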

\begin{proof}
Let $\omega$ be an abstract-definable $\mathcal{C}^\infty$ $k$-form on $U$, and suppose $F$ is an abstract-definable closed set with $\text{supp}(\omega)\subseteq F\subseteq U$. By the abstract-definable smooth version of Proposition \ref{7pu}, there exists an abstract-definable $\mathcal{C}^\infty$ function $\rho\colon M\to \mathbb{R}$ such that $0\leq \rho\leq 1$, $\rho|_F=1$, and $\text{supp}(\rho)\subseteq U$. Take $\widetilde{\omega}\colon N\to \bigwedge^kT^*N$ to be 
\[
\widetilde{\omega}(x)\mathrel{\mathop:}=
\begin{cases}
\rho(x)\omega(x),& \text{if}\ x\in U\\
0,&  \text{otherwise}
\end{cases}.
\]
Firstly, note that $\widetilde{\omega}$ is well-defined as an abstract-definable $\mathcal{C}^\infty$ map from $N$ to $\bigwedge^kT^*N$. Also, for any $x\in U$, if $x\in \text{supp}(\omega)\subseteq F$ then $\widetilde{\omega}(x)=\rho(x)\omega(x)=\omega(x)$; and if $x\in U\setminus \text{supp}(\omega)$ then $\omega(x)=0=\rho(x)\cdot 0 = \rho(x)\cdot \omega(x)=\widetilde{\omega}(x)$. In other words, $\widetilde{\omega}|_U=\omega$.
\end{proof}

\textit{For the remainder of the section, we assume that the Br\"ocker's question holds for any o-minimal expansion $\mathfrak{R}$ of $\mathbb{R}_{\exp}$ and for $\widetilde{\mathfrak{R}}$ taken to be the Pfaffian closure of $\mathfrak{R}$}.

\begin{proof}[Proof of Theorem \ref{8dr}]
Let $f,g\colon N\to M$ be abstract-definable $\mathcal{C}^\infty$ maps such that $f\simeq g$. Then, there exists an abstract-definable $\mathcal{C}^\infty$ map $H\colon N\times \mathbb{R}\to M$ satisfying $H(\iota_0(x))=f(x)$ and $H(\iota_1(x))=g(x)$, for all $x\in N$, where $\iota_t\colon N\to N\times \mathbb{R}$ is the abstract-definable $\mathcal{C}^\infty$ map given by $x\mapsto (x,t)$, for each fixed $t\in \mathbb{R}$. Since $^\sharp$ is a contravariant functor, $\iota_0^\sharp\circ H^\sharp=f^\sharp$ and $\iota_1^\sharp\circ H^\sharp=g^\sharp$. Hence, in order to prove the theorem it suffices to show that $\iota_0^\sharp$ and $\iota_1^\sharp$ are the same. Note that if there is a cochain homotopy $K\mathrel{\mathop:}= \{K_k\colon \Omega^k(N\times \mathbb{R})\to \Omega^{k-1}(N)\}_k$ between the induced pullback maps $\imath_1^*\mathrel{\mathop:}= \{(\imath_1^*)_k\colon \Omega^k(N\times \mathbb{R})\to \Omega^k(N)\}_k$ and $\imath_0^*\mathrel{\mathop:}= \{(\imath_0^*)_k\colon \Omega^k(N\times \mathbb{R})\to \Omega^k(N)\}_k$, then the induced maps in cohomology $\imath^\sharp_0$ and $\imath_1^\sharp$ agree with each other. In the remainder of the proof, we thus focus on establishing linear maps $K_k\colon \Omega^k(N\times \mathbb{R})\to \Omega^{k-1}(N)$ for $k\geq 0$, which satisfy the equality
\begin{equation}\label{1eo-mincoho}
d_{k-1}\circ K_k+K_{k+1}\circ d_k=(\imath_1^*)_k-(\imath_0^*)_k,
\end{equation}
where $d_k$ denotes the exterior derivative on $\Omega^k(N\times \mathbb{R})$ and, as an abuse of notation, on $\Omega^k(N)$ as well.
\\

\noindent\textbf{Claim 1}\label{claimhomotopyaxiom}
Every abstract-definable $\mathcal{C}^\infty$ $k$-form on $N\times \mathbb{R}$ can be written as a finite sum of abstract-definable $\mathcal{C}^\infty$ forms of the types: 
\begin{enumerate}
\item[(I)] $a\,\pi^*\eta$;
\item[(II)] $b\, dt\wedge \pi^*\tau$,
\end{enumerate}
 where $a,b$ are abstract-definable $\mathcal{C}^\infty$ functions on $N\times \mathbb{R}$, $\pi\colon N\times \mathbb{R}\to N$ is the projection onto the first factor, $\eta$ is an abstract-definable $\mathcal{C}^\infty$ $k$-form on $N$, and $\tau$ is an abstract-definable $\mathcal{C}^\infty$ $(k-1)$-form on $N$. 

\begin{proof}[Proof of Claim 1]
Let $\mathcal{B}$ denote the collection $\{\psi_j\colon V_j\to \psi_j(V_j)\subseteq \mathbb{R}^n\}_{j\in J}$, and fix an abstract-definable $\mathcal{C}^\infty$ $k$-form $\omega$ on $N\times \mathbb{R}$. Let $\{\rho_j\}_{j\in J}$ be an abstract-definable $\mathcal{C}^\infty$ partition of unity subordinate to $\mathcal{B}$ (see Lemma 4.6, \cite{fischer2008}). By the abstract-definable smooth version of Proposition \ref{7pu}, there exists a finite family $\{g_j\}_{j\in J}$ of abstract-definable $\mathcal{C}^\infty$ functions on $N$ such that for each $j$: $0\leq g_j\leq 1$, $g_j|_{\text{supp}(\rho_j)}=1$, and $\text{supp}(g_j)\subseteq V_j$. Note that $\{\pi^{-1}(V_j)=V_j\times \mathbb{R}\}_{j\in J}$ is an abstract-definable open cover of $N\times \mathbb{R}$. Also, $\{\pi^*\rho_j\}_{j\in J}$ is an absctract-definable $\mathcal{C}^\infty$ partition of unity subordinate to $\{\pi^{-1}(V_j)\}_{j\in J}$ in the following sense:
\begin{enumerate}
\item[(1)] each $\pi^*\rho_j\colon N\times \mathbb{R}\to \mathbb{R}$ is an abstract-definable $\mathcal{C}^\infty$ nonnegative function;
\item[(2)] $\text{supp}(\pi^*\rho_j)\subseteq \pi^{-1}(V_j)$, for each $j\in J$;
\item[(3)] $\sum_{j\in J}\pi^*\rho_j=1$.
\end{enumerate}
Indeed, (1) follows immediately from the definition of pullback of abstract-definable $\mathcal{C}^\infty$ $0$-forms, that is, $\pi^*\rho_j=\rho_j\circ \pi$. Now, observe that 
$$
\pi(\{(x,r)\in N\times \mathbb{R}\,:\, \rho_j(x)\neq 0\})=\{x\in N\,:\, \rho_j(x)\neq 0\}\subseteq \text{supp}(\rho_j)\subseteq V_j.
$$
Consequently, 
$$
\{(x,r)\in N\times \mathbb{R}\,:\, \rho_j(x)\neq 0\}\subseteq \pi^{-1}(\text{supp}(\rho_j))\subseteq \pi^{-1}(V_j).
$$
Since $\pi^{-1}(\text{supp}(\rho_j))$ is closed in $N\times \mathbb{R}$, 
$$
\text{supp}(\pi^*\rho_j)=\text{cl}_{N\times \mathbb{R}}(\{(x,r)\in N\times \mathbb{R}\,:\, \rho_j(x)\neq 0\})\subseteq \pi^{-1}(\text{supp}(\rho_j))\subseteq \pi^{-1}(V_j).
$$
Thus, (2) follows. Finally, because $\sum_{j\in J} \pi^*\rho_j(z,t)=\sum_{j\in J} \rho_j(\pi(z,t))=\sum_{j\in J}\rho_j(z)$ for all $(z,t)\in N\times \mathbb{R}$, the validity of (3) is thereby obtained. 

By virtue of (1)-(3), we can write $\omega$ as
$$
\omega=\left(\sum\limits_{j\in J} \pi^*\rho_j\right)\omega=\sum\limits_{j\in J}(\pi^*\rho_j)\omega=\sum\limits_{j\in J}\omega_j,
$$
where $\omega_j\mathrel{\mathop:}= (\pi^*\rho_j)\omega\in \Omega^k(N\times \mathbb{R})$. Note that 
\begin{equation}\label{2eo-mincoho}
\text{supp}(\omega_j)\subseteq \text{supp}(\pi^*\rho_j)\cap \text{supp}(\omega)\subseteq \text{supp}(\pi^*\rho_j)\subseteq \pi^{-1}(V_j).
\end{equation}

If we show that each $\omega_j$ can be written as a finite sum of type-(I) and type-(II) abstract-definable $\mathcal{C}^\infty$ forms, then we are done.

Let $(V_j,\psi_j)=(V_j,y^1,\ldots,y^n)$ be a chart in $\mathcal{B}$. Since $\pi^{-1}(V_j)=V_j\times \mathbb{R}$, the collection $\{(\pi^{-1}(V_j), \pi^*y^1,\ldots,\pi^*y^n,t)\}_{j\in J}$ forms an abstract-definable $\mathcal{C}^\infty$ atlas on $N\times \mathbb{R}$, where $t$ is the projection $(x,r)\mapsto r\colon N\times \mathbb{R}\to \mathbb{R}$ restricted to $\pi^{-1}(V_j)$. Thus, on $\pi^{-1}(V_j)$, the abstract-definable $\mathcal{C}^\infty$ $k$-form $\omega_j$ can be written uniquely as
\begin{align*}
\omega_j&=\sum\limits_{1\leq i_1<\cdots<i_k\leq n} a_{i_1\cdots i_k}d(\pi^*y^{i_1})\wedge \cdots \wedge d(\pi^*y^{i_k})\\
&+ \sum_{1\leq l_1<\cdots<l_{k-1}\leq n} b_{l_1\cdots l_{k-1}} dt\wedge d(\pi^*y^{l_1})\wedge \cdots \wedge d(\pi^*y^{l_{k-1}})\\
&=\sum\limits_{I} a_I\pi^*dy^I + \sum_{L} b_L dt\wedge \pi^*dy^L,
\end{align*}
after a rearrangement of the terms, where $I$ and $L$ denotes respectively $i_1<\ldots<i_k$ and $l_1<\ldots<l_{k-1}$, $dy^I\mathrel{\mathop:}= dy^{i_1}\wedge \cdots \wedge dy^{i_k}$, $dy^L\mathrel{\mathop:}= dy^{l_1}\wedge \cdots \wedge dy^{l_{k-1}}$, and $a_I$, $b_L$ are abstract-definable $\mathcal{C}^\infty$ functions on $\pi^{-1}(V_j)$. Once by (\ref{2eo-mincoho}) we have
\begin{align*}
\text{supp}(a_{i_1\cdots i_k}), \text{supp}(b_{l_1\cdots l_{k-1}}) &\subseteq \text{cl}_{\pi^{-1}(V_j)}(\{(x,t)\in \pi^{-1}(V_j)\,:\, \omega_j(x,t)\neq 0\})\\
&\subseteq \text{supp}(\omega_j)\subseteq \text{supp}(\pi^*\rho_j)\subseteq \pi^{-1}(V_j),
\end{align*}
we may then use Lemma \ref{lemmaforhomotopyaxiom} (with $F$ taken to be $\text{supp}(\pi^*\rho_j)$) to obtain abstract-definable $\mathcal{C}^\infty$ $0$-forms $\widetilde{a}_I$, $\widetilde{b}_L\colon N\times \mathbb{R}\to \mathbb{R}$ which extend $a_I$ and $b_L$ by zero, respectively. Note that we cannot proceed similarly for the abstract-definable $\mathcal{C}^\infty$ forms $dy^I$, $dy^L$ by applying Lemma \ref{lemmaforhomotopyaxiom}, since the (topological) closure of the subsets of their domains in which $dy^I$ and $dy^L$ do not vanish coincide with their domains $V_j$, and this is not a closed subset of $N$. Nevertheless, we may get around this problem through the multiplication of $\omega_j$ by $\pi^*g_j$. In fact, because $\pi^*g_j=1$ on $\text{supp}(\pi^*\rho_j)$ and $\text{supp}(\omega_j)\subseteq \text{supp}(\pi^*\rho_j)$, the equality $\omega_j=(\pi^*g_j)\omega_j$ holds. Therefore, on $\pi^{-1}(V_j)$, $\omega_j$ can be rewritten as 
\begin{align}\label{3eo-mincoho}
\omega_j=(\pi^*g_j)\omega_j&=\sum\limits_Ia_I(\pi^*g_j) \pi^*dy^I+\sum\limits_Lb_Ldt\wedge (\pi^*g_j)\pi^*dy^L\\
&=\sum\limits_Ia_I\pi^*(g_jdy^I)+\sum\limits_Lb_Ldt\wedge \pi^*(g_jdy^L)\nonumber
\end{align}
Once $\text{supp}(g_j|_{V_j})\subseteq \text{supp}(g_j)\subseteq V_j$, we obtain by Lemma \ref{lemmaforhomotopyaxiom} extensions $\eta_j$ and $\tau_j$ by zero of $g_jdy^I$ and $g_jdy^L$ to $N$, respectively. 

Finally, observe that the support of $\omega_j\in \Omega^k(N\times \mathbb{R})$ is contained in $\pi^{-1}(V_j)$ as well as the supports of each abstract-definable $\mathcal{C}^\infty$ form among $a_I$, $b_L$, $\pi^*(g_jdy^I)$, and $dt\wedge\pi^*(g_jdy^L)$. Thus, $\omega_j$ equals the extension by zero of $\omega_j|_{\pi^{-1}(V_j)}$ to $N\times \mathbb{R}$, which in turn equals the sum of the products of the extension by zero (to $N\times \mathbb{R}$) of each term in (\ref{3eo-mincoho}), in other words,
\begin{equation}\label{4eo-mincoho}
\omega_j=\sum\limits_I \widetilde{a}_I \pi^*\eta_j+\sum\limits_L \widetilde{b}_L dt\wedge \pi^*\tau_j,
\end{equation} 
with $\widetilde{a}_I,\widetilde{b}_L\in \Omega^0(N\times \mathbb{R})$, $\eta_j\in \Omega^k(N)$, and $\tau_j\in \Omega^{k-1}(N)$.
\end{proof}

 Define $K_k\colon \Omega^k(N\times \mathbb{R})\to \Omega^{k-1}(N)$ by: 
 \begin{enumerate}
 \item[(i)] $K_k(a\, \pi^*\eta)\mathrel{\mathop:}= 0$, on type-(I) abstract-definable $\mathcal{C}^\infty$ $k$-forms;
 \item[(ii)] $ K_k(b\, dt\wedge \pi^*\tau)\mathrel{\mathop:}= (\int_0^1b(y,t)dt)\tau$, on type-(II) abstract-definable $\mathcal{C}^\infty$ $k$-forms;
 \item[(iii)] $K_k$ is extended linearly.
 \end{enumerate}
 
  After fixing an abstract-definable $\mathcal{C}^\infty$ partition of unity $\{\rho_j\}_j$ subordinate to $\mathcal{B}$, and a finite collection $\{g_j\}_{j\in J}$ of abstract-definable $\mathcal{C}^\infty$ functions on $N$, we can express $\omega\in \Omega^{k}(N\times \mathbb{R})$ as a sum $\omega=\sum_j\omega_j$, where $\omega_j$ is decomposed uniquely into 
  $$
  \sum\limits_{j,I} a^j_I\pi^*\eta_j+\sum\limits_{j,L}b^j_Ldt\wedge \pi^*\tau_j
  $$
 like in (\ref{4eo-mincoho}) (see the proof of Claim 1). So,  $K_k(\omega)=\sum_{j,L}(\int_0^1b^j_L(x,t)dt)\tau_j$. (Lemma \ref{11,1odrc} shows that each $\int_0^1b^j_L(x,t)dt$ is an abstract-definable $\mathcal{C}^\infty$ function on $N$, therefore $K_k(\omega)$ lies indeed in $\Omega^{k-1}(N)$.)
 
Let us now check (\ref{1eo-mincoho}). Fix a chart $(V\times \mathbb{R},\pi^*y^1,\ldots,\pi^*y^n,t)$ on $N\times \mathbb{R}$. For type-(I) abstract-definable $\mathcal{C}^\infty$ $k$-forms, we have
\begin{align*}
&(K_{k+1}\circ d_k)(a \pi^*\eta)=\\
&K_{k+1}(d_0 a \wedge \pi^*\eta+a d_k(\pi^*\eta))=\\
&K_{k+1}\left(\left(\sum\limits_{i=1}^n\frac{\partial a}{\partial \pi^*y^i}d(\pi^*y^i)+\frac{\partial a}{\partial t} dt\right)\wedge \pi^*\eta+a\pi^*(d_k\eta)\right)=\\
&K_{k+1}\left(\frac{\partial a}{\partial t}dt\wedge \pi^*\eta+\sum\limits_{i=1}^n \frac{\partial a}{\partial \pi^*y^i}\pi^*(d_0y^i)\wedge \pi^*\eta+a \pi^*d_k\eta\right)=\\
&K_{k+1}\left(\frac{\partial a}{\partial t}dt\wedge \pi^*\eta\right)+\sum\limits_{i=1}^nK_{k+1}\left(\frac{\partial a}{\partial \pi^*y^i}\pi^*(d_0y^i\wedge \eta)\right)+K_{k+1}(a\pi^*d_k\eta)=\\
&\left(\int_0^1\frac{\partial a}{\partial t}dt\right)\eta=
(a(x,1)-a(x,0))\eta=\imath^*_1a(\imath^*_1\pi^*\eta)-\imath^*_0a(\imath_0^*\pi^*\eta)=(\imath_1^*-\imath_0^*)(a\pi^*\eta)
\end{align*}
 and  $(d_{k-1}\circ K_k)(a\pi^*\eta)=d_{k-1}(0)=0$. Hence, 
 $$
 (d_{k-1}\circ K_k+K_{k+1}\circ d_k)(a\pi^*\eta)=(\imath_1^*-\imath_0^*)(a\pi^*\eta).
 $$ 
 For type-(II) abstract-definable $\mathcal{C}^\infty$ $k$-forms, we get
 \begin{align*}
 &(d_{k-1}\circ K_k)(b\,dt\wedge \pi^*\tau)=\\
 &d_{k-1}\left(\left(\int_0^1b(y,t)dt\right)\tau\right)=\\
 &d_{k-1}\left(\int_0^1b(y,t)dt\right)\wedge \tau+\left(\int_0^1b(y,t)dt\right)d_{k-1}\tau=\\
 &\sum\limits_{i=1}^n \frac{\partial}{\partial y^i}\left(\int_0^1 b(y,t)dt\right)dy^i\wedge \tau+\left(\int_0^1b(y,t)dt\right)d_{k-1}\tau
 \end{align*}
and also
\begin{align*}
&(K_{k+1}\circ d_k)(b\,dt\wedge \pi^*\tau)=\\
&K_{k+1}(d_1(b\,dt)\wedge \pi^*\tau-b\,dt\wedge d_k(\pi^*\tau))=\\
&K_{k+1}(d_1b\wedge dt\wedge \pi^*\tau)-K_{k+1}(b\,dt\wedge \pi^*d_{k-1}\tau)=\\
&K_{k+1}\left(\sum\limits_{i=1}^n\frac{\partial b}{\partial \pi^*y^i}d\pi^*y^i\wedge dt\wedge \pi^*\tau\right)-K_{k+1}(b\,dt\wedge \pi^*d_{k-1}\tau)=\\
&-\sum\limits_{i=1}^nK_{k+1}\left(\frac{\partial b}{\partial \pi^*y^i}dt\wedge \pi^*(dy^i\wedge \tau)\right)-\left(\int_0^1b(y,t)dt\right)d_{k-1}\tau=\\
&-\sum\limits_{i=1}^n\left(\int_0^1\frac{\partial b}{\partial \pi^*y^i}(y,t)dt\right)dy^i\wedge \tau-\left(\int_0^1b(y,t)dt\right)d_{k-1}\tau=\\
&-\sum\limits_{i=1}^n\frac{\partial }{\partial y^i}\left(\int_0^1b(y,t)dt\right)dy^i\wedge \tau-\left(\int_0^1b(y,t)dt\right)d_{k-1}\tau,
\end{align*}
where the last equality followed from the differentiation under the integral sign. Furthermore, $\imath_1^*(b\, dt\wedge \pi^*\tau)=b(y,1)\,\imath_1^*(dt)\wedge \imath_1^*(\pi^*\tau)=0$, since $\imath_1^*(dt)=d(\imath_1^*t)=d(1)=0$. Similarly, $\imath_0^*(b\, dt\wedge \pi^*\tau)=0$. Therefore, 
$$
(d_{k-1}\circ K_k+K_{k+1}\circ d_k)(b\, dt\wedge \pi^*\tau)=0=(\imath_1^*-\imath_0^*)(b\, dt\wedge \pi^*\tau).
$$
This finishes the proof.
\end{proof}

\section{Final remarks and future works}

The validity of the Homotopy Axiom (Theorem \ref{8dr}) is uniquely conditioned to the abstract-definability of the function $y\mapsto \int_0^1 b(y,t)dt$, arisen when defining the cochain homotopy $(K_k\colon \Omega^k(N\times \mathbb{R})\to \Omega^{k-1}(N))_k$ (see (i)-(iii) below Claim \ref{claimhomotopyaxiom} in the proof of Theorem \ref{8dr} for the Pfaffian closure $\mathcal{P}(\mathcal{R})$). In turn, as pointed out by Lemma \ref{11,1odrc}, such an abstract-definability question reduces to whether Br\"ocker's question holds. Recall from Definition 1.3 (\cite{kaiser2012}) that, given an o-minimal expansion $\mathfrak{R}$ of $\mathbb{R}$ and a Borel measure $\mu$ on $\mathbb{R}^n$, an \textit{$\mathfrak{R}$-integrating o-minimal structure of $\mu$} is an o-minimal expansion $\widetilde{\mathfrak{R}}$ of $\mathfrak{R}$ such that the set $\infty(f,\mu)\mathrel{\mathop:}=\{x\in \mathbb{R}^m\,:\, \int_{\mathbb{R}^n}|f(x,t)|d\mu(t)=\infty\}$ and the function $x\mapsto \int_{\mathbb{R}^n}f(x,t)d\mu(t)\colon \mathbb{R}^m\setminus \infty(f,\mu)\to \mathbb{R}$ are both definable in $\widetilde{\mathfrak{R}}$, for every definable $f\colon \mathbb{R}^m\times \mathbb{R}^n\to \mathbb{R}$. Therefore, the Homotopy Axiom is true for each pair $(\mathfrak{R},\widetilde{\mathfrak{R}})$ of o-minimal expansions of the real field, with $\widetilde{\mathfrak{R}}$ an $\mathfrak{R}$-integrating o-minimal structure of the Borel measure on $\mathbb{R}$ given by the restriction on the Borel $\sigma$-algebra $\mathcal{B}(\mathbb{R})$ of Lebesgue measure $\lambda$. 
 
In \cite{rrr} we attempt to settle a result on the smoothing abstract-definable $\mathcal{C}^p$ manifolds, with $1\leq p<\infty$. Namely, any abstract definable $\mathcal{C}\sp{p}$ manifold has a compatible $\mathcal{C}\sp{p+1}$ atlas. This allows us to establish an o-minimal de Rham cohomology for the category of abstract-definable $\mathcal{C}^p$ manifolds, where $p$ is a positive integer, so we could remove the assumption on the fixed o-minimal structure $\mathcal{R}$ of admitting smooth cell decomposition.

A further step might be the formulation of a $\mathcal{C}^p$ singular cohomology for abstract-definable $\mathcal{C}^p$ manifolds where $1\leq p\leq \infty$, restricting Edmundo's work (\cite{edmundo2001}) on singular cohomology for the category of abstract-definable $\mathcal{C}^0$ manifolds and maps, with the ultimate goal of the establishment of a de Rham's theorem for the category of abstract-definable $\mathcal{C}^p$ manifolds and maps.

\end{document}